\documentclass[final,1p,10pt,times]{elsarticle}
\usepackage[T1]{fontenc}
\usepackage{amsmath,amssymb,amsfonts,amsthm}
\usepackage{mathtools,array,booktabs,longtable}
\usepackage{microtype}
\usepackage{tikz}
\usetikzlibrary{arrows.meta,calc,patterns,decorations.pathreplacing}
\usepackage[hidelinks]{hyperref}
\allowdisplaybreaks[1]
\theoremstyle{plain}
\newtheorem{theorem}{Theorem}[section]
\newtheorem{corollary}[theorem]{Corollary}
\newtheorem{proposition}[theorem]{Proposition}
\newtheorem{lemma}{Lemma}
\numberwithin{equation}{section}
\makeatletter
\renewcommand\normalsize{\@setfontsize\normalsize{9.73pt}{10.4pt}%
	\abovedisplayskip 8pt plus2pt minus4pt
	\abovedisplayshortskip 0pt plus2pt
	\belowdisplayshortskip 5pt plus2pt minus3pt
	\belowdisplayskip\abovedisplayskip\let\@listi\@listI}
\normalsize\makeatother
\DeclareMathSizes{9.73}{9.4}{6.58}{4.7}
\begin{document}
	\begin{frontmatter}
		\title{Maximum length and size of simultaneous core partitions with distinct parts}
		\author{Metin Can Aydemir}
		\ead{can.aydemir@bilkent.edu.tr}
		\address{Department of Mathematics, Bilkent University, 06800 Ankara, T\"urkiye}
		\begin{abstract}
			We give, for the first time, a formula for the largest size of an $(s,t)$-core partition with distinct parts valid for every coprime pair. We also determine the greatest number of parts of an $(s,t)$-core partition whose successive parts differ by at least $D$, whenever $\gcd(s,t)\le D$, removing the restrictions on the core parameters in the formula of Burson, Sisneros-Thiry, and Straub. The size formula uses a column description and a single Dedekind sum. The length formula uses coefficients from the Euclidean algorithm and satisfies a recurrence in smaller core parameters. We construct partitions attaining each maximum.
		\end{abstract}
		\begin{keyword}
			simultaneous core partitions \sep $t$-cores \sep $D$-distinct parts \sep maximum length \sep maximum size
			\MSC[2020] 05A17, 11P81.
		\end{keyword}
	\end{frontmatter}
	
	\section{Introduction}
	
	A partition is a finite weakly decreasing sequence $\lambda=(\lambda_1,\ldots,\lambda_\ell)$ of positive integers. Its size is $|\lambda|=\sum_i\lambda_i$, and its length is $\ell(\lambda)=\ell$. Its parts are $D$-distinct if $\lambda_i-\lambda_{i+1}\geq D$ for $1\leq i<\ell$, where $D$ is a positive integer; $D=1$ means distinct parts. The empty partition is included.
	
	The Young diagram of $\lambda$ has left-aligned rows of lengths $\lambda_i$. A cell's hook length counts itself and the cells to its right and below. A partition is an $s$-core if no hook length is divisible by $s$, and an $(s,t)$-core if it is both $s$-core and $t$-core.
	
	For the ordinary simultaneous core partitions, Anderson \cite{Anderson} enumerated coprime $(s,t)$-cores. Olsson and Stanton \cite[Theorem 4.1]{OlssonStanton} found the unique core of largest size. Its diagram contains every $(s,t)$-core diagram \cite[Theorem 2.4]{Vandehey}\cite[Theorem 5.1]{Fayers}, so it also has greatest length. Johnson \cite{Johnson} and Wang \cite{Wang} developed abacus and lattice-point descriptions and size statistics.
	
	Xiong \cite{XiongDistinct} and Straub \cite{Straub} proved Amdeberhan's \cite{Amdeberhan} Fibonacci conjecture for distinct-part $(s,s+1)$-cores; Xiong also found their largest size. Straub used nested twin-free sets for $(s,ms-1)$, and Nath and Sellers \cite{NathSellers} gave abacus constructions for $(s,ms\pm1)$. Sahin \cite{Sahin} studied $(s,s+1)$-cores with $D$-distinct parts; Kravitz \cite[Lemma 2.4]{Kravitz} proved finiteness exactly when $\gcd(s,t)\le D$.
	
	For length, Paramonov's refined polynomials \cite{Paramonov} treat $(s,ms+1)$, and Burson, Sisneros-Thiry, and Straub \cite[Lemma 23]{BSS} give the $D$-distinct maximum for $(s,ms+r)$ with $1\le r\le D$ and for $(s,ms-1)$. Pesikoff, Przybocki, and Xia \cite{PPX} determine the maximum hook length in every finite $D$-distinct family. For size with ordinary distinct parts, Xiong \cite[Theorems 1.1 and 1.3]{XiongSize} treats $(s,ms\pm1)$, and Yan, Qin, Jin, and Zhou \cite{Yan} treat odd $(s,s+2)$.
	
	We determine the greatest length for every finite $D$-distinct pair and the greatest size with distinct parts for every coprime pair. In the standard column description, a new bound on the triangular sum of run widths gives the length formula. Size also requires comparing run placements: a longest partition need not be largest.
	
	Write $\Lambda_D(s,t)$ for the greatest length. It is symmetric, vanishes if either parameter is $1$, is infinite if $\gcd(s,t)>D$, and equals $1$ if $D\ge\min(s,t)\ge2$; see Section \ref{sec:preliminaries}. Hence assume $2\le s<t$ and $\gcd(s,t)\le D<s$. We use $R=D+1$ because differences of at least $D$ between parts become differences of at least $R$ between beta-numbers. Put $x_+=\max\{x,0\}$, $T(z)=z(z+1)/2$, and let $\mathbf1_{\{E\}}$ be $1$ if $E$ holds and $0$ otherwise.
	
	Run the Euclidean algorithm on $t,s$, stopping at the first remainder $a$ with $1\leq a\leq D$, and let $g$ be the preceding remainder. Such a remainder occurs because the last positive remainder is $\gcd(s,t)\leq D$. Reversing these Euclidean steps gives
	\begin{equation}\label{eq:euclidean-matrix}
		s=dg+ua,\qquad t=eg+va,\qquad dv-eu=\varepsilon\in\{-1,1\}.
	\end{equation} Starting at $(1,0),(\lfloor t/s\rfloor,1)$, each Euclidean step $(g,a)\mapsto(a,g-ka)$ replaces a coefficient pair $(x,y)$ by $(kx+y,x)$. This fixes the coefficients, with $g>D$, $d,e,v\ge1$, $0\le u\le d$, and $u<d$ if $d\ge2$. If $u=0$, then $d=v=1$ and $g=s$; this occurs exactly when $t\bmod s\leq D$. Put
	\begin{equation}\label{eq:euclidean-parameters}
		g=qR+b,\qquad 0\leq b<R,\qquad \chi=\mathbf{1}_{\{u>0, a+b\geq R\}}.
	\end{equation} Thus $q\geq1$.
	
	\begin{theorem}\label{thm:recurrence}
		Let $s\geq2$ and $t=ms+r$, where $m\geq1$, $1\leq r<s$, and $\gcd(s,r)\leq D<s$. If $r>D$, put
		\begin{equation}\label{eq:A-definition}
			A_D(s,r)=qT(d)+\chi T(u),\qquad
			\kappa_D(s,r)=\mathbf1_{\{D\geq2,\ s\equiv D\bmod r,\ r\equiv 1\bmod R\}}.
		\end{equation} The parameters in $A_D(s,r)$ depend only on $s,r,D$, and
		\begin{equation}\label{eq:main-recurrence}
			\Lambda_D(s,ms+r)=mA_D(s,r)+\Lambda_D(r,s)-\kappa_D(s,r).
		\end{equation} If $1\leq r\leq D$, then
		\begin{equation}\label{eq:terminal}
			\Lambda_D(s,ms+r)=m\left\lfloor\frac{s}{R}\right\rfloor
			+\mathbf1_{\{\max(r,s\bmod R)\geq2\}}.
		\end{equation}
	\end{theorem}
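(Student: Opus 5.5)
We translate the problem into the standard beta-set (column) language, as the abstract indicates. A $D$-distinct $(s,t)$-core of length $\ell$ corresponds to its set of first-column hook lengths $\beta=\{\lambda_i+\ell-i\}$. This is a finite set $S\subset\mathbb{N}$ with three properties. First, it is closed under subtracting $s$ and $t$ while staying positive, i.e.\ $S$ is an order ideal in the poset of positive integers outside the numerical semigroup $\langle s,t\rangle$. Second, any two of its elements differ by at least $R=D+1$. Third, $\ell=|S|$. So $\Lambda_D(s,t)$ is the maximum size of an $R$-separated order ideal of gaps of $\langle s,t\rangle$. Writing the gaps of $\langle s,ms+r\rangle$ in columns modulo $s$, residue class by residue class, the gaps in each column form an interval. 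The order-ideal condition says the chosen elements in each column form an initial segment, and that these segments are compatible under the shift by $t\equiv r \pmod s$. So we will encode $S$ by the heights of its runs in the columns.

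Step one is the terminal case $1\le r\le D$. Any two elements of an $R$-separated set in the same column differ by a multiple of $s$. A residue class $c \pmod s$ contributes at most about $m$ gaps below $t$ (more precisely, gaps $c+js$ with $j\le m$ for the relevant $c$). We will show two things. An $R$-separated selection can use at most $\lfloor s/R\rfloor$ residues per ``layer'' of width $s$, and there are $m$ such layers. Near the top, only one extra element can be added, and this is possible exactly when $\max(r,s \bmod R)\ge2$. The upper bound comes from partitioning $[1,ms+r)$ into $m$ blocks of length $s$ plus the remainder, and counting at most $\lfloor s/R\rfloor$ separated points per block. Boundary effects need care, and the slack is exactly the indicator term. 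The construction takes points $iR$ in the appropriate residues, repeated in each layer.

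Step two handles $r>D$. We decompose the gaps of $\langle s,ms+r\rangle$ into the $m$ ``strips'' $[ks,(k+1)s)$ for $k<m$ together with a top region, which after the reduction $t\mapsto t-ms$ is a copy of the gap structure of $\langle r,s\rangle$. We then prove the upper bound
\[
|S|\le m\,A_D(s,r)+\Lambda_D(r,s)-\kappa_D(s,r).
\]
This needs two ingredients. Each strip contributes at most $A_D(s,r)$ elements. The restriction of $S$ to the top region is an $R$-separated $(r,s)$-ideal, so it contributes at most $\Lambda_D(r,s)$. The strip bound is the heart of the argument. Inside a strip, the order-ideal condition under $-r$ organizes the residues into runs according to the Euclidean data $(\ref{eq:euclidean-matrix})$. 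There are runs of widths $d$ (repeated, giving $g=qR+b$ many separated slots per run) and runs of widths $u$. The triangular sum $\sum T(\text{run width})$ bounds the count: a run of width $w$ allows at most $T(w)$ separated points compatible with closure. The correction $\chi T(u)$ appears only if the leftover $a+b$ reaches $R$. This ``triangular sum of run widths'' bound is the new inequality the introduction mentions, and we expect it to be the main obstacle. We will try proving it by induction along the Euclidean steps, using the coefficient update $(x,y)\mapsto(kx+y,x)$ to transport the bound from one step to the next.

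Step three is the construction. We stack $m$ copies of an extremal strip configuration, each achieving $qT(d)+\chi T(u)$ points, and place an optimal $(r,s)$ configuration in the top region. We then check $R$-separation across the interfaces. The interface can fail by exactly one point precisely when $D\ge2$, $s\equiv D \pmod r$ and $r\equiv1 \pmod R$. In that case the last point of the top configuration collides with the first strip point at distance $<R$. The matching upper-bound refinement then shows this loss is forced: both extremal configurations are rigid, so the collision is unavoidable, and this yields $-\kappa_D$. Finally, the claim that the parameters in $A_D$ depend only on $(s,r,D)$ is immediate: the Euclidean algorithm on $(ms+r,s)$ agrees with the one on $(s,r)$ after the first step, and only the coefficients $d,u$, not $e,v$, enter $A_D$.
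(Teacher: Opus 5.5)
There is a genuine gap, and it starts with the decomposition in Step two. The beta-set does not split by value into $m$ strips $[ks,(k+1)s)$, each carrying at most $A_D(s,r)$ elements, plus a top region that is an $R$-separated $(r,s)$-ideal. An interval of length $s$ holds at most $\lceil s/R\rceil$ points of an $R$-separated set, which can be much smaller than $A_D(s,r)$. The excess must then sit in your top region, and no bound by $\Lambda_D(r,s)$ holds there.

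Concretely, take $(s,t)=(11,15)$ and $D=1$. Then $A_1(11,4)=9$ and $\Lambda_1(4,11)=5$. The maximal beta-set $\{1,3,5,7,9,12,14,16,18,20,27,29,31,42\}$ has nine elements above $11$. Even read as "gaps of $\langle 4,11\rangle$", it has eight, including $14$ but not $10=14-4$.

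The correct split is by column height, not by value. In $t$-order, the column of residue $jt\bmod s$ has $jm+\lfloor jr/s\rfloor$ gaps. After saturating a fixed allowed support $S$, the length is affine in $m$, namely $mA_S+B_S$, with $A_S=\sum_iT(\ell_i)$ over the $r$-runs. Setting $m=0$ yields a $D$-distinct $(s,r)$-core, so $B_S\le\Lambda_D(r,s)$; you must also check that a capped column stays feasible. This is also what gives meaning to your phrase "a run of width $w$ allows $T(w)$ points": $T(w)$ is the slope contribution of a saturated run, not a count inside one strip.

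Even after this repair, the substance of the theorem is missing.

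\emph{The run bound.} The inequality $\sum_iT(\ell_i)\le qT(d)+T(\chi u)$ over all cyclically $R$-separated residue sets is the heart of the proof, and you only announce it. "Induction along the Euclidean steps" does not explain how cyclic separation modulo $s$, and runs along $+r$, would transfer to the pair $(r,s\bmod r)$. The paper proves it from three ingredients:
\begin{enumerate}
\item the bound $|U|\le qd+\chi u+d-L$, where $L$ is the longest run, obtained by deleting points from the orbit $P_d=\{0,r,\ldots,(d-1)r\}$, whose gaps are $g$ and $g+a$;
\item the bound $qd+\chi u$ on the total length of any $q+1$ runs, via a packing argument for sheared rectangles modulo the lattice $\langle(s,0),(r,1)\rangle$;
\item an integer inequality combining the two.
\end{enumerate}

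\emph{The lower bound.} Stacking an extremal slope configuration on an optimal $(r,s)$ configuration presupposes a single support that maximizes $A_S$ and $B_S$ simultaneously. You do not establish this, and it is false when $\kappa_D(s,r)=1$. The paper instead builds explicit beta-sets attaining the closed form $\Phi_D$ of Theorem~\ref{thm:closed}. It then checks algebraically that $\Phi_D(s,ms+r)=mA+\Phi_D(r,s)-\kappa$ and inducts on the smaller parameter.

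\emph{The correction $-\kappa$.} In the upper bound this is not an interface collision. When $\kappa=1$, one has $\Lambda_D(r,s)=qd+1$. An intercept of $qd+1$ forces a rigid $B_0\subseteq\{1,\ldots,s-1\}$, whose support contains a capped $(d+1)$-run, so its slope is only $A-1$. Every support therefore loses one in slope or in intercept.

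\emph{The terminal case.} In Step one, a single block can hold $\lfloor s/R\rfloor+1$ separated gaps when $s\bmod R\ge2$, e.g.\ $\{1,4\}$ for $s=5$, $R=3$. So the count must use the column structure and the cap condition across levels. The paper takes this case from \cite[Lemma 23]{BSS}.
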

	
	The recurrence ends at \eqref{eq:terminal}, the formula of \cite[Lemma 23]{BSS}, and its correction vanishes for ordinary distinct parts.
	
	\begin{theorem}\label{thm:closed}
		Let $2\leq s<t$ and $\gcd(s,t)\leq D<s$. With the notation in \eqref{eq:euclidean-matrix}-\eqref{eq:euclidean-parameters}, put
		\begin{equation}\label{eq:full-residual-length}
			K=\frac{de+d+e-1}{2},\qquad C=\frac{uv+u+v-1}{2}.
		\end{equation} Then
		\begin{equation}\label{eq:main-closed}
			\Lambda_D(s,t)=
			\begin{cases}
				qK+\mathbf1_{\{\max(a,b)\geq2\}},&\chi=0,\\
				qK+C+\mathbf1_{\{b\geq2\}},&\chi=1.
			\end{cases}
		\end{equation}
	\end{theorem}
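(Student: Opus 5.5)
We will derive Theorem \ref{thm:closed} from Theorem \ref{thm:recurrence} by induction along the Euclidean algorithm. The plan is to show that the right-hand side of \eqref{eq:main-closed}, call it $F_D(s,t)$, satisfies the same recurrence \eqref{eq:main-recurrence} and the same terminal values \eqref{eq:terminal}. The induction is on the number of Euclidean steps before the first remainder $a\le D$ appears.

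\textbf{Base case ($t\bmod s\le D$).} Here $u=0$, $d=v=1$, $g=s$, $a=r$ and $e=m$. Then $K=(m+1+m-1)/2=m$, $\chi=0$, and $q=\lfloor s/R\rfloor$, $b=s\bmod R$. So \eqref{eq:main-closed} reads $m\lfloor s/R\rfloor+\mathbf1_{\{\max(r,s\bmod R)\ge2\}}$, which is exactly \eqref{eq:terminal}.

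\textbf{Inductive step ($r>D$).} Write $t=ms+r$. The Euclidean run on $(t,s)$ is the run on $(s,r)$ preceded by one step. Hence $g,a,q,b$ and the final quotient data are unchanged. Passing from $(r,s)$ to $(s,ms+r)$ applies the coefficient update $(x,y)\mapsto(mx+y,x)$ to the pairs. Denote the data for the pair $(r,s)$ by $d',u',e',v'$, so that $r=d'g+u'a$ and $s=e'g+v'a$. Then
\[
d=e',\quad u=v',\quad e=me'+d',\quad v=mv'+u'.
\]
Substituting gives
\[
K-K'=\frac{de+d+e-1-d'e'-d'-e'+1}{2}=\frac{me'^2+me'}{2}=mT(d),
\]
and likewise $C-C'=mT(u)$. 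Thus, provided $\chi$ and the indicator term agree for $(r,s)$ and $(s,t)$, we get
\[
F_D(s,t)-F_D(r,s)=m\bigl(qT(d)+\chi T(u)\bigr)=mA_D(s,r),
\]
which is the recurrence with $\kappa=0$.

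\textbf{Where $\chi$ and the indicator can change.} The main obstacle is the cases where they do not agree. They can only change when $u'=0$ but $u>0$, that is, when $(r,s)$ is itself a base case, so that $s\bmod r\le D$ and $a=s\bmod r$. In that case $(r,s)$ has $\chi'=0$ and contributes $q\cdot(\ldots)+\mathbf1_{\{\max(a,b)\ge2\}}$, while for $(s,t)$ we have $u=v'=1$ and $\chi=\mathbf1_{\{a+b\ge R\}}$.

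\emph{Subcase $a+b<R$.} The two sides agree directly.

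\emph{Subcase $a+b\ge R$.} Here $C=C'+mT(1)=C'+m$, because $C'=(0+0+v'-1)/2=0$ when $v'=1$. The indicator becomes $\mathbf1_{\{b\ge2\}}$ in place of $\mathbf1_{\{\max(a,b)\ge2\}}$. These differ exactly when $b\le1$ and $a\ge2$. Combined with $a+b\ge R$ and $a\le D=R-1$, this forces $b=1$ and $a=D\ge2$. Since $g=r=qR+1$ and $s\equiv a\pmod r$, this is precisely the condition $r\equiv1\pmod R$, $s\equiv D\pmod r$, $D\ge2$. So the discrepancy is exactly $-\kappa_D(s,r)$.

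For deeper inductive steps ($u'>0$), we check that $\chi$ and $\mathbf1_{\{b\ge2\}}$ are inherited unchanged. We must also confirm that $\kappa=0$ there: when $u'>0$, the condition $s\equiv D\pmod r$ with $r\equiv1\pmod R$ cannot hold, since it would make $(r,s)$ terminal. Verifying this incompatibility cleanly, together with the degenerate sign and $d\ge2$ constraints, is the fiddly point of the argument.
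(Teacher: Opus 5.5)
Your algebra is correct and is essentially the paper's verification of \eqref{eq:formalrecurrence}. With $d=e'$, $u=v'$, $e=me'+d'$, $v=mv'+u'$ (the paper writes $e_0=e-md$, $v_0=v-mu$), you get $K-K'=mT(d)$ and $C-C'=mT(u)$. The branch and bonus change only when $u'=0$ and $a+b\ge R$, and there the change is exactly $-\kappa_D(s,r)$. The point you call fiddly is immediate. Because $r>D$, the condition $s\equiv D\pmod r$ says $s\bmod r=D$, so $(r,s)$ is terminal and $\kappa=1$ forces $u'=0$. For $u'>0$ you have $\chi=\chi'$, since $u=v'\ge1$ and $g,a,q,b$ are shared.

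The genuine gap is that you take the recurrence \eqref{eq:main-recurrence} for $\Lambda_D$ as given. In the paper, Theorem \ref{thm:recurrence} is not proved independently. Both theorems are proved together, and the recurrence is \emph{deduced} from \eqref{eq:main-closed} via \eqref{eq:formalrecurrence}. Relative to the paper your argument is therefore circular. Your computation shows only that the right-hand side $\Phi_D$ obeys the recursion and the terminal values; it says nothing about $\Lambda_D$. The missing content is the extremal argument, in three parts.
\begin{itemize}
\item[(1)] The upper bound $\Lambda_D(s,ms+r)\le mA+\Lambda_D(r,s)$. Saturating a fixed support gives length $mA_S+B_S$. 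Setting $m=0$ yields an $(r,s)$-core, so $B_S\le\Lambda_D(r,s)$. The slope bound $A_S=\sum_iT(\ell_i)\le qT(d)+\chi T(u)$ comes from Theorem \ref{thm:universal-run-bound}, which needs the anchor count, the sheared-rectangle packing of Lemma \ref{lem:longruns}, and the integer inequality of Lemma \ref{lem:integer}.
\item[(2)] The sharper bound \eqref{eq:critical} when $\kappa=1$. Here you must show that any support whose intercept reaches $\Lambda_D(r,s)=qd+1$ has slope only $qT(d)=A-1$.
\item[(3)] An explicit beta-set of length $\Phi_D(s,t)$, which is Lemma \ref{lem:positive-construction} with its case analysis.
\end{itemize}
Induction with (1) and (2) gives $\Lambda_D\le\Phi_D$, and (3) gives equality. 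Only after that does your bookkeeping yield the recurrence.
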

	
	The determinant gives $\gcd(d,e)=1$ and, for $u>0$, $\gcd(u,v)=1$, so $K,C$ are integers; when $u=0$, $v=1$ and $C=0$.
	
	For the maximum size $M(s,t)$, restrict to distinct parts and coprime $2\le s<t$. We now use the unique positive integers $d,e$ with $|dt-se|=1$, $2d\le s$, $2e\le t$, and define
	\begin{equation}\label{eq:size-parameters}
		dt-se=\varepsilon\in\{-1,1\},\qquad 1\le d\le s/2, \qquad s=2dq+a,\quad 0\le a<2d,\quad u=(a-d)_+.
	\end{equation} Here $\varepsilon=1$ when $s=2$. Put
	\begin{equation}\label{eq:full-moments}
		h_j=\left\lceil\frac{je}{d}\right\rceil,\qquad K=\sum_{j=1}^dh_j=\frac{de+d+e-1}{2},\qquad J=\sum_{j=1}^djh_j,\qquad Z=\sum_{j=1}^dh_j^2.
	\end{equation} For $0\le k\le q$, define
	\begin{equation}\label{eq:size-full}
		F_k=\frac{k}{2}\left[2tJ-sZ+(s-\varepsilon)K-kK(K+2)\right].
	\end{equation} If $u>0$, let $$v=\frac{ue+\varepsilon}{d},\qquad g_j=\left\lceil\frac{jv}{u}\right\rceil,\qquad C=\sum_{j=1}^ug_j=\frac{uv+u+v-1}{2}.$$ Since $dv-ue=\varepsilon$, $v$ is a positive integer coprime to $u$. With $b_+=1$ and $b_-=2q+2$, put
	\begin{equation}\label{eq:size-residual}
		\begin{split}
			Q_{\rm sh}&=\sum_{j=1}^u\left((jt-b_\varepsilon)g_j-s\binom{g_j}{2}\right),\\
			F_{\rm res}&=F_q+Q_{\rm sh}-qK(C+1)-\binom C2.
		\end{split}
	\end{equation} Section \ref{sec:arithmetic} expresses $J,Z,Q_{\rm sh}$ through one Dedekind sum depending only on $s,d$.
	
	\begin{theorem}\label{thm:size}
		Put $W=2tJ-sZ+(s-\varepsilon)K$, $\mathcal D=K(K+2)$, and $k_* =\min\left\{q,\left\lfloor\frac{W+\mathcal D}{2\mathcal D}\right\rfloor\right\}$. The largest size of a distinct-part $(s,t)$-core is
		\begin{equation}\label{eq:main-size}
			M(s,t)=
			\begin{cases}
				F_{k_*},&d=1\text{ or }2,\\
				F_q,&d\ge3\text{ and }u=0,\\
				\max\{F_q,F_{\rm res}\},&d\ge3\text{ and }u>0.
			\end{cases}
		\end{equation} Moreover, $M(1,t)=0$ and $M(s,t)=M(t,s)$.
	\end{theorem}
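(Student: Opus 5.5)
We work in the standard beta-set description. A distinct-part $(s,t)$-core corresponds to a set $S\subset P=\mathbb N\setminus(s\mathbb N+t\mathbb N)$ that is closed under subtracting $s$ and $t$ within $P$, and that contains no two consecutive integers. The parts of the partition are the elements of $S$ themselves, so $|\lambda|=\sum_{x\in S}x$. Writing each $x\in P$ uniquely as $x=st-is-jt$ with $i,j\ge1$ places $P$ on the lattice triangle. There the down-set condition says that $S$ is an order ideal, and each column is a run of consecutive lattice points. The distinctness condition ($R=2$) forbids $x$ and $x+1$ from both lying in $S$. Since $dt-se=\varepsilon$, the element $x+1$ (or $x-1$) is obtained from $x$ by the lattice shift $(i,j)\mapsto(i+e,j-d)$ or its inverse, up to sign.

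\textbf{Step 1.} We reorganise $P$ into columns indexed by residues modulo this shift. The slopes $e/d$ then force an admissible ideal to consist of $k$ full ``blocks'' of column heights $h_j=\lceil je/d\rceil$ for $j=1,\dots,d$, possibly followed by a residual block of heights $g_j$ for $j\le u$. This is the same column description that gives $K$ and $C$ in Theorem \ref{thm:closed}. The heights $h_j$ are exactly the maximal heights avoiding an adjacent pair across a shift. Here $s=2dq+a$ says at most $q$ full blocks fit, and the residual block fits only when $u=(a-d)_+>0$.

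\textbf{Step 2.} We sum the elements. The $k$ nested full blocks contribute $\sum_{\ell<k}\sum_j\sum_{h\le h_j}(\text{value})$. Expanding the value $st-is-jt$ linearly, and accounting for the offset of block $\ell$, which shifts each entry by $\ell(K+1)$ positions in the ordering, produces $F_k$ as in \eqref{eq:size-full}. The first-order terms give $2tJ-sZ+(s-\varepsilon)K$, and the quadratic term $-kK(K+2)$ arises from the accumulating offsets. The residual block adds $Q_{\rm sh}$, with start offset $b_\varepsilon$ depending on which direction the shift goes. The subtraction $qK(C+1)+\binom C2$ is the analogous cross term for placing it after $q$ full blocks.

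\textbf{Step 3.} We show that every admissible ideal has size at most the size of one of these canonical configurations, via an exchange argument. Any ideal can be modified column by column, pushing elements to larger values without breaking closure or distinctness, until it becomes canonical. We expect this to be the main obstacle, since the longest and the largest ideals differ. We will first try a potential-function argument: compare a non-canonical run placement with the canonical one and show that the triangular bound on run widths, which was used for the length formula, also bounds weighted sums.

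\textbf{Step 4.} We optimise over $k$. Since $F_k$ is a concave quadratic in $k$, its maximum over $0\le k\le q$ occurs at $k_*=\min\{q,\lfloor (W+\mathcal D)/(2\mathcal D)\rfloor\}$. For $d\ge3$ we show $W\ge(2q-1)\mathcal D$, so $k_*=q$; this uses $t$ large relative to $K$ because $e\approx td/s$. For $d\le2$ the vertex can lie below $q$. When $u>0$, the residual configuration competes with $F_q$ and neither dominates in general, giving $\max\{F_q,F_{\rm res}\}$. Finally, $M(1,t)=0$ because only the empty partition is a $1$-core, and symmetry holds because the definition is symmetric in $s$ and $t$.
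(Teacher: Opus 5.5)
There is a genuine gap. Step 3, the upper bound, is essentially the whole content of the theorem, and you only announce an exchange argument for it. The mechanism you sketch also cannot work, because your size statistic is wrong.

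A set closed under subtracting $s,t$ with no two consecutive integers is the beta-set of the partition, not its set of parts. By \eqref{eq:beta-statistics}, the size is $\sum_{x\in S}x-\binom{|S|}{2}$. For example, at $(10,13)$ the maximizer $(30,21,19,13,11,9,6,4,2)$ is not closed under subtracting $10$. Its beta-set sums to $151$, and $151-\binom{9}{2}=115$. Your Step 2 can only produce the term $-kK(K+2)$ in \eqref{eq:size-full} from this correction, combined with the fact that successive full blocks are translates by $-2$.

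Because of the penalty, adjoining a beta-number $x$ changes the size by $x-|S|$. So pushing toward larger or inclusion-maximal ideals is not monotone. This is exactly why the maximum $115$ at $(10,13)$ occurs at length $9<\Lambda(10,13)=10$. It is also why candidates of different cardinality genuinely compete: $F_q$ against $F_{\rm res}$, and $k_*<q$ when $d\le2$.

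Your Step 1 claim that admissible ideals are forced into $k$ canonical full blocks plus one residual block is also false. Supports may contain short runs, deficient full runs, or other placements such as $S_{\rm res}+d$. That a \emph{maximizer} has canonical form is precisely what must be proved.

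The paper closes this gap with a chain of weighted comparisons.
\begin{itemize}
\item Lemma \ref{lem:size-saturation} shows maximizers are saturated, and it uses the quadratic penalty explicitly.
\item Each run contributes $H(U-L)$, and a minimum-cardinality maximizer has $L<U$ on every run.
\item Short runs have mean below the weight $w$ (Lemma \ref{lem:short-mean-elementary}).
\item The linear certificate $Q-wL\le Kk(2q-k)$ of Theorem \ref{weighted-full-placement} settles the case $L\le qK$, hence also $u=0$. This is where $F_q\ge F_k$ for $d\ge3$ comes from; you only assert it heuristically as $W\ge(2q-1)\mathcal D$.
\item For $L>qK$, local-extension, phase-normalization and full-block-transport lemmas force exactly $q+1$ runs with every gap equal to $d$. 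An explicit error-term comparison (Proposition \ref{prop:actual-complete-comparison}) then bounds such profiles by $\max\{F_q,F_{\rm res}\}$.
\end{itemize}
Separately, $d=1$ rests on Xiong's theorem, and $d=2$ needs its own packing and concavity argument (Theorem \ref{d2-complete-theorem}). That argument includes the check $F_{\rm res}<F_q$ when $d=2$ and $u=1$, i.e.\ $s=4q+3$. Your Step 4 gives no reason why the residual candidate drops out for $d=2$.
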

	
	Sections \ref{sec:preliminaries}-\ref{sec:maximum} prove the length formulas, Sections \ref{sec:size}-\ref{sec:size-proof} the size formula, and Section \ref{sec:arithmetic} evaluates its coefficients by Dedekind reciprocity.

	\section{Columns and run bounds}\label{sec:preliminaries}
	\subsection{Beta-sets and ordered columns}
	
	The \emph{beta-set} $\beta(\lambda)=\{\lambda_i+\ell-i:1\le i\le\ell\}$ consists of the first-column hook lengths. Conversely, $B=\{b_1>\cdots>b_\ell>0\}$ gives parts $b_i-\ell+i$, including the empty partition when $B=\varnothing$. Thus
	\begin{equation}\label{eq:beta-statistics}
		\ell(\lambda)=|B|,\qquad |\lambda|=\sum_{b\in B}b-\binom{|B|}{2}.
	\end{equation} Here $\binom{x}{2}=x(x-1)/2$, also for real $x$. We use the following characterization from \cite[Lemmas 2.1 and 2.2]{Kravitz}.
	
	\begin{lemma}\label{lem:beta}
		A partition with beta-set $B$ is an $s$-core if and only if $b\in B$, $b\geq s$ imply $b-s\in B$. It has $D$-distinct parts if and only if any two different elements of $B$ differ by at least $R$.
	\end{lemma}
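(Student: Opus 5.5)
The plan is to prove both equivalences directly from hook lengths in the first column and from the beta-number formula $b_i=\lambda_i+\ell-i$.

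\emph{The $D$-distinct part.} For $i<j$ we have
\[
b_i-b_j=(\lambda_i-\lambda_j)+(j-i).
\]
If every consecutive difference satisfies $\lambda_i-\lambda_{i+1}\ge D$, then each consecutive beta-gap $b_i-b_{i+1}=\lambda_i-\lambda_{i+1}+1$ is at least $R$. Since $B$ is listed in decreasing order, any two distinct elements then differ by at least $R$. Conversely, if any two elements differ by at least $R$, apply this to consecutive elements $b_i,b_{i+1}$ to get $\lambda_i-\lambda_{i+1}\ge R-1=D$. This part is routine.

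\emph{The $s$-core part.} I would use the standard fact that the hook lengths in row $i$ of $\lambda$ are exactly the numbers $b_i-c$ with $0\le c<b_i$ and $c\notin B$. I would justify this by reading the boundary of the Young diagram as a $0/1$ word: cells correspond to pairs (an east step after a north step), and the hook length is the difference of positions. Under this description a hook of length divisible by $s$ exists if and only if some $b\in B$ and some $k\ge1$ satisfy $b-ks\ge0$ and $b-ks\notin B$. Here $0$ counts as a non-element, since $B\subset\mathbb{Z}_{>0}$.

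\emph{Reduction to the closure condition.} If $B$ is closed under $b\mapsto b-s$ whenever $b\ge s$, then iterating keeps every $b-ks\ge0$ inside $B$. The value $0$ can never be reached, because $b-ks=0$ would require $0\in B$ via the step from $s\in B$. So no hook is divisible by $s$. Conversely, if $b\in B$, $b\ge s$ and $b-s\notin B$, then $b-s$ is a hook length. This covers the case $b=s$ as well, where the hook length is $s-0$ with $0\notin B$. Hence $\lambda$ is not an $s$-core.

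The main obstacle is the hook-length description of row $i$. This is classical: it is the abacus characterization of cores, due to James and Kerber, and is Kravitz's Lemmas 2.1 and 2.2. The paper may simply cite it.
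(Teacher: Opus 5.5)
Your proof is correct and is the standard argument: the paper gives no proof of its own, citing Kravitz's Lemmas 2.1 and 2.2, which rest on your identity $b_i-b_{i+1}=\lambda_i-\lambda_{i+1}+1$ and on the row hook-length description $\{b_i-c:0\le c<b_i,\ c\notin B\}$. One cosmetic slip: if beta-numbers are the positions of north steps read from the bottom-left corner, a cell is a north step preceded by an east step (giving hook length $b-c$ with $c<b$), not an east step after a north step as you wrote.
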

	
	The \emph{abacus} marks $B$ in residue columns modulo $s$. For an $s$-core, column zero is empty and every other column is filled from its bottom without gaps; its cardinality is its \emph{height}. Thus a $1$-core is empty. If $D\ge s$, Lemma \ref{lem:beta} forbids two beads in one column or two occupied columns, so $(1)$ attains greatest length one when $s,t>1$. If $h=\gcd(s,t)>D$, the beta-sets $\{1,1+h,\ldots,1+(k-1)h\}$ give cores of every length $k$. Henceforth $h\le D<s$.
	
	Put $p=s/h$. Addition by $t$ modulo $s$ gives $h$ cycles, one for each class modulo $h$, ordered by $$\rho_{ip+k}=i+kt-s\left\lfloor\frac{i+kt}{s}\right\rfloor\qquad(0\leq i<h,\ 0\leq k<p).$$ Let $x_j$ be the height at residue $\rho_j$, and let $j^-$ index its cyclic predecessor in the same cycle. Then
	\begin{equation}\label{eq:columns}
		B=\bigcup_{j=1}^{s-1}\{\rho_j+ks:0\leq k<x_j\},\qquad x_0=0,\quad x_j\in\mathbb Z_{\geq0}.
	\end{equation} Put $c_j=(t+\rho_{j^-}-\rho_j)/s$. This is a nonnegative integer, and subtraction of $t$ gives $(\rho_j+ks)-t=\rho_{j^-}+(k-c_j)s$. For $h=1$, $\rho_j=jt\bmod s$ and $c_j=\lfloor jt/s\rfloor-\lfloor(j-1)t/s\rfloor$, giving \cite[Proposition 4.1]{CHK} in this ordering.
	
	\begin{lemma}\label{lem:chain}
		The set \eqref{eq:columns} represents a $D$-distinct $(s,t)$-core if and only if
		\begin{equation}\label{eq:chain}
			0\leq x_j\leq x_{j^-}+c_j\qquad(1\leq j<s)
		\end{equation} and the following separation conditions hold. Let $S=\{j:x_j>0\}$ and let $U=\{\rho_j:j\in S\}$. Successive elements of $U$ in increasing order differ by at least $R$. If $|U|\geq2$, $\alpha=\min U$, $\beta=\max U$, and $s+\alpha-\beta\leq D$, then the column of residue $\alpha$ has height one.
	\end{lemma}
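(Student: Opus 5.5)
We prove Lemma \ref{lem:chain} by translating each condition of Lemma \ref{lem:beta} into the column coordinates \eqref{eq:columns}. The $s$-core condition is built into \eqref{eq:columns}: a set of this form has every column filled from the bottom without gaps and column zero empty. Conversely, every $s$-core beta-set has this form, by the abacus remark. So it remains to show that the $t$-core condition is equivalent to \eqref{eq:chain}, and that the $R$-separation condition is equivalent to the stated separation conditions.

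\emph{$t$-core condition.} Take $b=\rho_j+ks\in B$ with $0\le k<x_j$ and $b\ge t$. By the identity $(\rho_j+ks)-t=\rho_{j^-}+(k-c_j)s$, we have $b-t\in B$ if and only if $k-c_j<x_{j^-}$, provided $k-c_j\ge0$. Now $b\ge t$ holds exactly when $k\ge c_j$, since $\rho_{j^-}<s$ and $\rho_{j^-}+(k-c_j)s\ge0$ is equivalent to $k\ge c_j$. Hence the $t$-condition for column $j$ says that every $k$ with $c_j\le k<x_j$ satisfies $k-c_j<x_{j^-}$. Taking $k=x_j-1$ shows this is equivalent to $x_j\le x_{j^-}+c_j$, which is \eqref{eq:chain}.

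When $j^-$ is the residue-zero column (that is, $\rho_{j^-}=0$), we read $x_{j^-}=0$, and the same computation applies: elements of $B$ in residue class $0$ do not exist. The class of $0$ itself is excluded because $0\notin B$. We also check that $c_j\ge0$ is an integer. This holds because $\rho_j\equiv\rho_{j^-}+t\pmod s$ and $\rho_j<s\le t+\rho_{j^-}$ (using $s<t$, or more generally $s\le t$ when $t\ge s$; for $t<s$ the parameters can be swapped by symmetry).

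\emph{Separation.} Distinct elements of $B$ must differ by at least $R=D+1$. We split pairs of elements into three kinds.
\begin{itemize}
\item Same column: two elements differ by a positive multiple of $s\ge R$, so there is no constraint, since $D<s$.
\item Different columns with residues $\rho<\rho'$: the differences are $|\rho'-\rho+ms|$ for $m\in\mathbb Z$. The candidates for the minimum are $\rho'-\rho$ (when $m=0$) and $s-(\rho'-\rho)$ (when $m=-1$). The second is realized only if the column of $\rho$ contains an element at level $k\ge1$, giving the pair $\rho+ks$ and $\rho'+(k-1)s$. Larger $|m|$ gives differences at least $s>D$.
\item The $m=0$ differences over all pairs of occupied columns are at least $R$ if and only if successive elements of $U$ differ by at least $R$.
\end{itemize}

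For the $m=-1$ differences, $s-(\rho'-\rho)$ is smallest for $\rho=\alpha$ and $\rho'=\beta$. In general, the pair $(\rho,\rho')$ with $\rho<\rho'$ gives a violation exactly when $s-(\rho'-\rho)\le D$ and column $\rho$ has height at least $2$. Here is the key point. If successive elements of $U$ are $R$-separated and $|U|\ge3$, then any pair other than $(\alpha,\beta)$ has $\rho'-\rho\le\beta-\alpha-R$. Hence $s-(\rho'-\rho)\ge s-(\beta-\alpha)+R>D$. So only the pair $(\alpha,\beta)$ matters, and the condition becomes: if $s+\alpha-\beta\le D$, then column $\alpha$ has height $1$. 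When $|U|=2$ the only pair is $(\alpha,\beta)$ anyway, and when $|U|\le1$ there is nothing to check.

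Combining the three translations gives the equivalence. The only delicate points are the bookkeeping at the zero column in the chain condition and the reduction of the wrap-around constraint to the single pair $(\alpha,\beta)$.
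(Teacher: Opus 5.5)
Your proof is correct and follows the same route as the paper: the identity $(\rho_j+ks)-t=\rho_{j^-}+(k-c_j)s$ turns the $t$-core condition into \eqref{eq:chain}, and splitting cross-column differences into same-level and adjacent-level pairs reduces the wrap-around conflict to the pair $(\alpha,\beta)$, since moving either endpoint inward adds at least $R$. One small correction concerns your side remark on $c_j$: $c_j=\lfloor(\rho_{j^-}+t)/s\rfloor\ge0$ holds for every $t$, because $\rho_j$ is the least residue of $\rho_{j^-}+t$, so there is no need to appeal to $s\le t$ or to ``symmetry''. The symmetry appeal would fail anyway, since the lemma is later applied on the same mod-$s$ abacus with $t$ replaced by $r<s$.
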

	\begin{proof}
		The subtraction formula gives \eqref{eq:chain}, equivalently \cite[Proposition 14]{Paramonov}, without a coprimality assumption. Within a column, differences are at least $s>D$. Between columns, a difference at most $D$ can occur only at the same or adjacent levels. Ordinary spacing excludes the former. The latter requires $s+\rho-\sigma\le D$ for occupied residues $\rho<\sigma$, necessarily $\rho=\alpha$, $\sigma=\beta$: moving either endpoint inward increases the difference by at least $R$. This conflict occurs exactly when column $\alpha$ has a second element. The characterization now follows from Lemma \ref{lem:beta}.
	\end{proof}
	
	Call $S\subseteq\{1,\ldots,s-1\}$ \emph{allowed} if its residues are $R$-separated in increasing order, with zero heights off $S$. The last condition of Lemma \ref{lem:chain} \emph{caps} column $\alpha$ at height one. A \emph{run} is a maximal string of consecutive selected columns within a cycle, \emph{full} if its length is $d$ and \emph{short} if smaller. For a cyclically $R$-separated residue set, every run has length at most $d$, since $dt\equiv\varepsilon a\pmod s$ with $1\le a\le D$. For a general allowed set, this bound applies after deleting the capped column.
	
	Every allowed set omits a column from each cycle, whose residues have spacing $h\le D$. Starting after an omitted column, set a capped height to one and use $x_j=x_{j^-}+c_j$ at other selected columns. Since $c_j\ge1$, these heights are positive, feasible, and maximal by \eqref{eq:chain}. This \emph{saturation} gives heights
	\begin{equation}\label{eq:run-heights}
		\left\lfloor\frac{jt+\rho}{s}\right\rfloor\qquad(1\le j\le\ell)
	\end{equation} on an uncapped run preceded by residue $\rho$. The length problem therefore reduces to choosing $S$.
	
	A residue set is \emph{cyclically $R$-separated} if every circular gap is at least $R$. Write $t=ms+r$, where $m\ge1$ and $1\le r<s$ because $\gcd(s,t)\le D<s$. The cycles and allowed sets depend only on $r$, and the increments $c_j$ grow by $m$, so a fixed support has saturated length $mA_S+B_S$. Delete any capped column to obtain a cyclically separated support $S'$. The coefficient of $m$ is zero off $S'$ and increases by one along each run, giving
	\begin{equation}\label{eq:slope}
		A_S=\sum_iT(\ell_i).
	\end{equation} Here $\ell_i$ are the run lengths of $S'$, including any split caused by deletion. Conversely, translate any cyclically $R$-separated set to avoid zero; its $r$-run lengths are unchanged and its support is uncapped.
	
	If $r>D$, setting $m=0$ gives a $D$-distinct $(s,r)$-core, possibly with zero selected heights. Indeed, at a capped residue $\alpha\le D-1<r$, the new increment is one, so the fixed height one is feasible; elsewhere the same recursion applies. Hence
	\begin{equation}\label{eq:general-upper}
		B_S\leq\Lambda_D(r,s),\qquad
		\Lambda_D(s,ms+r)\leq m\max_S A_S+\Lambda_D(r,s).
	\end{equation} We next determine $\max_S A_S$; the correction $\kappa_D(s,r)$ will require a sharper bound.
	
	\subsection{The run bound}\label{sec:run-bound}
	The parameters in \eqref{eq:euclidean-matrix}-\eqref{eq:euclidean-parameters} satisfy
	\begin{equation}\label{eq:euclidean-identities}dt-es=\varepsilon a,\qquad ut-vs=-\varepsilon g.\end{equation} Since $s\ge dg$ and $g>D\ge h$, we have $d<s/h$, so $P_d=\{0,r,\ldots,(d-1)r\}\pmod s$ has distinct points. Its clockwise gaps have lengths $g$ and $g+a$, with exactly $u$ of the latter.
	
	For $d\ge2$, let $f=e^{-1}\bmod d$, $1\le f<d$. For $0\le j<d$, $$jr\bmod s=\frac{(je\bmod d)s}{d}+\frac{\varepsilon ja}{d}.$$ The perturbation has magnitude less than $a<s/d$, so it does not change the order determined by $je\bmod d$. Thus the next orbit index is $j+f\bmod d$. If $\varepsilon=1$, then $f=d-u$; if $\varepsilon=-1$, then $f=u$. Subtracting the displayed coordinates gives gaps $g$ and $g+a$. In the positive case the long gaps start at indices $0,\ldots,u-1$; in the negative case they end at those indices.
	
	For $d=1$, the single gap $s=g+ua$ gives the same description. Thus $P_d$ is cyclically $R$-separated, whereas $dr\equiv\varepsilon a\pmod s$: $d$ is the first return within circular distance $D$ of zero, and bounds every cyclically separated run length.
	
	Put $w=\chi u$. The gap count yields
	\begin{equation}\label{eq:gapcapacity}
		\sum_{G\text{ gap of }P_d}\left\lfloor\frac G R\right\rfloor
		=(d-u)q+u\left\lfloor\frac{g+a}{R}\right\rfloor
		=qd+w.
	\end{equation}
	
	Column runs correspond to maximal strings $z,z+r,\ldots,z+(\ell-1)r$ modulo $s$, called cyclic $r$-runs.
	\begin{theorem}\label{thm:universal-run-bound}
		For every cyclically $R$-separated $U\subset\mathbb Z/s\mathbb Z$, its cyclic $r$-run lengths satisfy
		\begin{equation}\label{eq:energy}
			\sum_i T(\ell_i)\le qT(d)+T(w).
		\end{equation}
		Equality is attained.
	\end{theorem}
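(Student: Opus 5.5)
We organise $U$ by its cyclic $r$-runs and compare it with the model configuration built from $P_d$. Write $U$ as a disjoint union of runs $z_i,z_i+r,\ldots,z_i+(\ell_i-1)r$, each of length $\ell_i\le d$ by the first-return property of $d$ established above. The key device is to look at $U$ through the "rotation by $r$" picture: since the points $0,r,\ldots,(d-1)r$ form the configuration $P_d$ with gaps $g$ (multiplicity $d-u$) and $g+a$ (multiplicity $u$), a run of length $\ell$ starting at $z$ is a translate of an initial segment of $P_d$. We will exploit the fact that $U$ and $U+r$ overlap exactly in $\sum_i(\ell_i-1)$ points, and more generally $|U\cap(U+jr)|\ge\sum_i(\ell_i-j)_+$, so that
\[
\sum_i T(\ell_i)=\sum_{j=0}^{d-1}\sum_i(\ell_i-j)_+\le\sum_{j=0}^{d-1}N_j,\qquad N_j:=\#\{z\in U: z,z+r,\ldots,z+jr\in U\}.
\]
The plan is to bound each $N_j$ by a packing argument on the circle.

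For fixed $j$, the set $W_j$ of starting points $z$ with $z+kr\in U$ for all $0\le k\le j$ has the property that $W_j+\{0,r,\ldots,jr\}\subseteq U$. I would bound $\sum_j N_j$ jointly rather than each term separately: for a point $z\in U$, consider the maximal $k(z)$ with $z,\ldots,z+k(z)r\in U$; then $\sum_i T(\ell_i)=\sum_{z\in U}(k(z)+1)$. Order the points of $U$ around the circle and attach to each $z$ the translate $z+P_{k(z)+1}$. The clockwise gaps of $P_d$ partition the circle into $d$ arcs of lengths $g$ or $g+a$; a point of $U$ in an arc of length $G$ can be charged, and $R$-separation allows at most $\lfloor G/R\rfloor$ points per arc modulo boundary effects. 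Summing over arcs gives the capacity identity \eqref{eq:gapcapacity}, $qd+w$, which is the total "number of slots". The intended inequality is that a run of length $\ell$ consumes slots in a way that the weight $T(\ell)$ is maximized by filling $q$ complete layers of $d$ slots each (giving $qT(d)$) plus one partial layer of $w$ slots (giving $T(w)$); this is a convexity/majorization statement: given run lengths $\ell_i\le d$ with $\sum\ell_i\le qd+w$ and an extra structural constraint, $\sum T(\ell_i)$ is maximized by $q$ copies of $d$ and one $w$.

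So the concrete steps are: (1) show $|U|\le qd+w$ by sliding each point of $U$ into the arcs of $P_d$ and using \eqref{eq:gapcapacity}; (2) show the stronger layered constraint that for each $k$, the number of runs of length $\ge k$ is at most the number of arcs of $P_d$ in which a "$k$-th layer" fits, i.e. $\#\{i:\ell_i\ge k\}\le q+\mathbf 1_{\{k\le w\}}$; (3) conclude $\sum_iT(\ell_i)=\sum_k k\,\#\{i:\ell_i\ge k\}$-type rearrangement bound, more precisely $\sum_i T(\ell_i)=\sum_{k\ge1}\sum_{i:\ell_i\ge k}(\ell_i-k+1)$, maximized under (1),(2) by the stated configuration. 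Equality is attained by taking $q$ translates of $P_d$ spaced by $R$ inside each short gap, namely $U=\bigcup_{c=0}^{q-1}(cR+P_d)$, together, when $\chi=1$, with the $u$ extra points $qR+\{$starts of the long gaps$\}$, which form a single run of length $u$ by the description of the next-orbit index $j\mapsto j+f$; one checks $R$-separation from $g=qR+b$ and $a+b\ge R$.

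The main obstacle is step (2): runs of $U$ need not be aligned with translates of $P_d$ in a layered way, and a long run can straddle arcs. I would first try an exchange/shifting argument: push every run clockwise until its starting point abuts the previous element at distance exactly $R$, showing this never decreases run lengths and preserves $R$-separation, reducing to configurations that are unions of translates $c_iR+P_{\ell_i}$, for which the slot count in each gap of $P_d$ is explicit and the layered bound follows from $g=qR+b$, $b<R$. If shifting fails to preserve runs, the fallback is induction on the Euclidean step, replacing $(s,r)$ by $(r,s\bmod r)$ as in the recurrence of Theorem \ref{thm:recurrence}.
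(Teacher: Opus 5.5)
\textbf{There is a genuine gap: both constraints your upper bound rests on are false.} Your equality construction is sound; it is the same orbit-gap configuration the paper uses.

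\textbf{Counterexample to steps (1) and (2).} Take $D=1$, so $R=2$, and $(s,t)=(10,13)$. Then $r=3$ and $P_3=\{0,3,6\}$ has gaps $3,3,4$. The parameters are $(g,a)=(3,1)$, $d=3$, $u=1$, $q=b=\chi=1$ and $w=1$, so $qd+w=4$ and $qT(d)+T(w)=7$.
\begin{itemize}
\item The set $U=\{0,2,4,6,8\}$ is cyclically $2$-separated with $|U|=5$. This refutes step (1).
\item All $3$-runs of this $U$ are singletons. So it has five runs of length at least $1$, against the bound $q+\mathbf 1_{\{1\le w\}}=2$ of step (2).
\item The set $\{0,3,5,8\}$ has two runs $\{0,3\},\{5,8\}$ of length $2$, against $q+\mathbf 1_{\{2\le w\}}=1$.
\end{itemize}
Both sets still satisfy \eqref{eq:energy} ($5\le7$ and $6\le7$). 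So the inequality is not controlled by layer counts, and far more than $q+1$ short runs can coexist.

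The ``boundary effects'' you set aside in step (1) are exactly the problem. The count $\lfloor G/R\rfloor$ bounds the points in a gap of $P_d$ only when the gap's endpoints lie in $U$; otherwise a gap can hold $\lceil G/R\rceil$ points. Your shifting reduction cannot repair this, because $\{0,2,4,6,8\}$ is already a union of translates $cR+P_1$.

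\textbf{What is missing.} You need the correct constraints and the optimization that links them.
\begin{itemize}
\item \emph{Anchor bound.} The paper translates a longest run to $P_L\subseteq U$, which makes the gap count legitimate. Removing the $d-L$ points of $P_d\setminus P_L$ raises the capacity by at most one each. This gives $|U|\le qd+w+d-L$ (Lemma \ref{lem:anchor}), which equals $6$ in the example above, not $4$.
\item \emph{Long-run bound.} The total length of any $q+1$ runs is at most $qd+w$ (Lemma \ref{lem:longruns}). This is proved by a packing argument for sheared rectangles modulo the lattice with basis $(-a,d),(g,u)$.
\item \emph{Integer inequality.} Deducing $\sum_iT(\ell_i)\le qT(d)+T(w)$ from these two constraints is a genuine integer inequality (Lemma \ref{lem:integer}). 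It is proved by induction on $q$, with a case analysis at $q=1$, not by majorization over layers.
\end{itemize}
Your Euclidean-induction fallback is too vague to supply any of these three pieces.
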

	We need two bounds on run lengths and an integer inequality.
	
	\begin{lemma}\label{lem:anchor}
		If the longest run has length $L$, then $$|U|\le qd+w+d-L.$$ \end{lemma}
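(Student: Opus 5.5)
Fix a longest run $z,z+r,\ldots,z+(L-1)r$ of $U$. By translation we may take $z=0$, so the run is $\{0,r,\ldots,(L-1)r\}\subseteq P_d$. Here $L\le d$, since $d$ bounds every cyclically separated run length. The plan is to count how many points of $U$ fit into the circle once we know $L$ consecutive orbit points are used. We compare $U$ with the configuration $P_d$, whose gap capacity is given by \eqref{eq:gapcapacity}.

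The first step is a packing count relative to $P_d$. Cut the circle $\mathbb Z/s\mathbb Z$ at the $d$ points of $P_d$ into arcs of lengths $g$ (there are $d-u$ of these) and $g+a$ (there are $u$). Assign each point of $U$ to the half-open arc $[p,p')$ containing it, where $p,p'$ are consecutive points of $P_d$. Points of $U$ inside one arc of length $G$ are pairwise at least $R$ apart, and together with the next arc's first point they satisfy the circular separation. Hence the arc $[p,p')$ contains at most $\lceil G/R\rceil$ points of $U$. It contains at most $\lfloor G/R\rfloor$ points when $U$ contains $p$ or $p'$, because the points of $U$ in $[p,p')$ must then fit in $[p,p'-R]$ and start at $p$, or avoid a neighbourhood of $p'$.

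The second step uses the run to fix the boundary points. The run's points $0,r,\ldots,(L-1)r$ are endpoints of arcs. Using the description of the successor map $j\mapsto j+f \bmod d$, an arc whose left or right endpoint lies in the run satisfies the sharper bound $\lfloor G/R\rfloor$. The remaining arcs number at most $d-L$ (there are $d$ arcs and at least $L$ of them have an endpoint in the run), and each can gain at most one extra point, since $\lceil G/R\rceil\le\lfloor G/R\rfloor+1$. Summing and using \eqref{eq:gapcapacity} gives
\[|U|\le \sum_G\lfloor G/R\rfloor+(d-L)=qd+w+d-L.\]

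The main obstacle is this arc-assignment step. We must check that exactly $L$ arcs, rather than fewer, have an endpoint in the run. We must also check that the ceiling bound costs at most one extra point per remaining arc, including long arcs $g+a$ in the case $\chi=0$, where $\lfloor(g+a)/R\rfloor=q$. In that case $\lceil (g+a)/R\rceil\le q+1$ because $b+a<R$, so the bound still holds. When $a+b\ge R$ we have $\lfloor (g+a)/R\rfloor=q+1$ and $\lceil\cdot\rceil\le q+2$, so again the loss per arc is at most one.

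For the endpoint count, I would index the arcs by their left endpoint $jr$, $0\le j<d$. The $L$ run points are distinct points of $P_d$, so they are the left endpoints of $L$ distinct arcs, and each of these arcs obeys the floor bound. This gives the count without needing the orbit ordering. The only remaining care is the half-open convention, which ensures each point of $U$ is counted once and makes "contains the left endpoint $p$" imply the floor bound.
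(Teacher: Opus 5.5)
Your strategy is sound and essentially the paper's: compare $U$ with the gap capacity \eqref{eq:gapcapacity} of $P_d$, losing at most one point for each of the $d-L$ orbit points outside the run. However, the endpoint claim you finally rely on is false as stated.

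With half-open arcs $[p,p')$, knowing that the left endpoint $p$ lies in $U$ gives no improvement. If $R\nmid G$, the points $p,p+R,\ldots,p+\lfloor G/R\rfloor R$ all lie in $[p,p')$. That is $\lceil G/R\rceil$ points; for example, $G=2R+1$ holds $p,p+R,p+2R$ when $p'\notin U$.

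The floor bound comes only from the closing endpoint. If $p'\in U$, cyclic separation forces $U\cap[p,p')$ into $[p,p'-R]$. This interval has $G-R+1$ integers, so it holds at most $\lfloor G/R\rfloor$ points spaced by $R$.

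So you should index arcs by their right endpoints, or switch to the convention $(p,p']$. Each run point $jr$, $0\le j<L$, closes exactly one arc of $P_d$. These $L$ arcs are distinct and obey the floor bound, and the remaining $d-L$ arcs obey $\lceil G/R\rceil\le\lfloor G/R\rfloor+1$. This yields $|U|\le qd+w+d-L$. Your separate case analysis of the long arcs is unnecessary, since $\lceil x\rceil\le\lfloor x\rfloor+1$ for every real $x$.

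The paper organizes the same count more coarsely, which avoids this endpoint bookkeeping. It works with the $L$ gaps of $P_L$ itself. Each such gap has both endpoints in $U$, so it admits at most $\lfloor G_i/R\rfloor-1$ further points, giving $|U|\le\sum_i\lfloor G_i/R\rfloor$. It then observes that each of the $d-L$ deletions taking $P_d$ to $P_L$ merges two gaps and raises $\sum\lfloor G/R\rfloor$ by at most one. Your refined per-arc version, once corrected, proves the same inequality through the same subadditivity.
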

	\begin{proof}
		By translation, we may take a longest run to be $P_L$. A gap $G_i$ admits at most $\lfloor G_i/R\rfloor-1$ further points, so $|U|\le\sum_i\lfloor G_i/R\rfloor$. We obtain $P_L$ by deleting $d-L$ points from $P_d$; each deletion merges two gaps and increases this sum by at most one. The bound therefore follows from \eqref{eq:gapcapacity}.
	\end{proof}
	
	\begin{lemma}\label{lem:longruns}
		If $d\ge2$, the sum of the lengths of any $q+1$ runs is at most $qd+w$. The same conclusion holds for a collection of fewer runs.
	\end{lemma}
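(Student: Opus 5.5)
We have to prove Lemma \ref{lem:longruns}: if $d\ge2$, any $q+1$ runs of a cyclically $R$-separated set $U$ have total length at most $qd+w$. The approach is to localize $U$ near long runs by a gap-counting argument in the spirit of Lemma \ref{lem:anchor}.

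\textbf{Setup.} Take runs $\mathcal R_0,\dots,\mathcal R_q$ of lengths $\ell_0,\dots,\ell_q$, each at most $d$. Their total is at most $(q+1)d$. It therefore suffices to show that the total is at most $qd+w$, which is needed only when it exceeds $qd+w$.

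\textbf{Gap structure of a run.} A run $z,z+r,\dots,z+(\ell-1)r$ is a translate of $P_\ell$. The clockwise gaps of $P_\ell$ are obtained from those of $P_d$ (lengths $g$ and $g+a$, with $u$ long ones) by merging. Each run of length $\ell<d$ thus occupies a ``skeleton'' whose gaps must receive the remaining points of $U$, including the other runs.

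\textbf{Main idea: superimpose the runs.} Fix $\mathcal R_0$ and translate it to $P_{\ell_0}$. The other $q$ runs are disjoint from it and lie in the gaps of $P_{\ell_0}$. Every point of $U$ in a gap $G$ of $P_{\ell_0}$ is $R$-separated, so that gap holds at most $\lfloor G/R\rfloor-1$ points.

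The key structural claim concerns a second run $z+jr$ ($0\le j<\ell_1$). Since $r$-translation permutes gaps of $P_d$ (the next index is $j+f$), consecutive points of that run advance through the gaps in the orbit order. Hence a run of length $\ell_1$ meets $\ell_1$ distinct gaps of $P_d$, or their merged versions. So each gap of $P_d$ contains at most one point from each other run.

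Moreover, a gap of length $g=qR+b$ can contain at most $q-1$ interior points when $b<R$, and a long gap at most $q-1+\mathbf 1_{\{a+b\ge R\}}$.

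\textbf{Counting.} Summing over the $d$ gaps of $P_d$, the points of the other $q$ runs number at most $(q-1)d+w$ plus corrections from the $d-\ell_0$ deleted points. Each deletion frees one slot, by the merge argument of Lemma \ref{lem:anchor}. This gives
\[
\ell_0+\sum_{i\ge1}\ell_i\le \ell_0+(q-1)d+w+(d-\ell_0)=qd+w.
\]

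\textbf{Fewer runs.} With fewer than $q+1$ runs, we either pad the collection with empty runs or apply the same count directly, since dropping runs only decreases the total.

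\textbf{Main obstacle.} The hard step is the claim that each gap of $P_d$ absorbs at most one point per additional run, together with the exact slot count $q-1$ or $q$ per gap. Long gaps with $a+b\ge R$ are the delicate case, since they give $w$ instead of $0$. Handling this needs the orbit description with index shift $f=d-u$ or $u$ and the hypothesis $d\ge2$, which guarantees $f\ne0$ so that a run cannot stay in one gap.

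If this per-gap interleaving argument fails, the fallback is induction on $q$. We would remove the $R$-neighbourhood strip containing one full copy of the $P_d$ pattern, reducing $g$ by $R$ and hence $q$ by one. We would then check that the constraints on the remaining $q$ runs are preserved.
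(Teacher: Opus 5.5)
Your proposal has a genuine gap, and it sits at the step you flag yourself as the main obstacle. The justification ``each deletion frees one slot, by the merge argument of Lemma~\ref{lem:anchor}'' does not hold. That merge argument controls $\sum_G\lfloor G/R\rfloor$, a count that includes the points of $P_{\ell_0}$ themselves. For interior slots $\sum_G(\lfloor G/R\rfloor-1)$, a deletion can free \emph{two} slots, not one. The number of gaps drops by one, and the deleted position $jr$ with $j>\ell_0$ may itself be occupied by a point of another run; only $\ell_0r$ is excluded by maximality. What the merge argument actually gives is $\sum_i\ell_i\le|U|\le qd+w+d-\ell_0$. That is Lemma~\ref{lem:anchor} again, and it is weaker than the claim whenever $\ell_0<d$; the case $\ell_0=d$ is already immediate from it.

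No argument using only gap capacities can do better. Take $s=13$, $r=5$, $R=2$, so $d=5$, $u=3$, $g=2$, $a=1$, $q=1$, $w=0$. The set $U=\{0,2,4,6,8,10\}$ is cyclically $2$-separated with runs $\{8,0\},\{10,2\},\{4\},\{6\}$. With $\mathcal R_0=\{4\}$ there are five further points, exceeding your bound $(q-1)d+w+(d-\ell_0)=4$. So a run-specific ingredient is indispensable, and the one you offer is too weak. ``At most one point of each other run per gap of $P_d$'' essentially only recovers $\ell_i\le d$, and it needs care near the wrap, where $dr\equiv\varepsilon a$ rather than $0$. The per-gap capacities $q-1$ and $q-1+\chi$ require both endpoints of the gap to lie in $U$, which again means $\ell_0=d$.

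The missing idea is to use all $q+1$ runs symmetrically. If their total exceeds $qd+w$, then $M=\sum_i(d-\ell_i)<d-w$, so $\ell_i+\ell_j>d$ for \emph{every} pair, and any two runs are forced to stack against each other. The paper represents runs as rectangles of width $R$ and height $\ell_i$ modulo $\mathcal L=\langle(s,0),(r,1)\rangle=\langle(-a,d),(g,u)\rangle$. It shears by $\xi=x+ay/d$ and orders the $q+1$ rectangles around the period $c=g+au/d$. Vertical overlap of consecutive rectangles forces $\gamma_i\ge R+a\beta_i/d$ with $\beta_i\in[\ell_i-d,\ell_i)$. Summing gives $(q+1)R\le g-an$, where $\sum_i\beta_i=u+nd\ge-M$. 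Since $(q+1)R>g$, and $(q+1)R>g+a$ when $a+b<R$, this forces $M\ge d-u$ or $M\ge 2d-u$, contradicting $M<d-w$.

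Your fallback induction on $q$ is too unspecified to assess. It would have to show that removing strips of width $R$ yields a cyclically separated set for new parameters $(s',r')$ with the same run lengths, and that is far from clear.
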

	\begin{proof}
		Reflecting if necessary, we may assume $dr\equiv-a$ and $ur\equiv g\pmod s$. The lattice $\mathcal L=\langle(s,0),(r,1)\rangle$ then has basis $(-a,d),(g,u)$: both vectors belong to it and their determinant is $-s$. We represent a run $z,z+r,\ldots,z+(\ell-1)r$ by
		$$[z+(\ell-1)r,z+(\ell-1)r+R)\times[0,\ell)\quad\text{modulo }\mathcal L.$$
		At height $j+\tau$, $0<\tau<1$, subtracting $j(r,1)$ gives the interval of length $R$ starting at $z+(\ell-1-j)r$ on the circle of circumference $s$. Thus cyclic separation is equivalent to disjoint rectangle interiors.
		
		We first identify points differing by $(-a,d)$. The coordinate $\xi=x+ay/d$ is unchanged, while $y$ is periodic modulo $d$. Let $(c_i,\theta_i)$ be the coordinates of a rectangle's bottom-left corner. At an interior phase $y$, its horizontal interval has width $R$ and left endpoint $c_i+a\tau_i(y)/d$, where $0<\tau_i(y)<\ell_i$ and $\tau_i(y)\equiv y-\theta_i\pmod d$. Figure \ref{fig:sheared-packing} shows these sheared rectangles. The remaining period $(g,u)$ changes $(c_i,\theta_i)$ to $(c_i+c,\theta_i+u)$, where $c=g+au/d$.
		
		\begin{figure}[htbp]
			\centering
			\begin{tikzpicture}[x=1.3cm,y=0.68cm,font=\small,>=stealth]
				\filldraw[fill=black!8] (0,0) -- (2,0) -- (2.75,3) -- (0.75,3) -- cycle;
				\filldraw[fill=black!18] (2.6,1) -- (4.6,1) -- (5.35,4) -- (3.35,4) -- cycle;
				\node at (1.25,1) {$i$};
				\node at (4.25,3) {$i+1$};
				\draw[dashed,black!55] (-0.45,0) -- (2.6,0);
				\draw[dashed,black!55] (-0.45,1) -- (0.25,1);
				\node[anchor=east] at (-0.5,0) {$\theta_i$};
				\node[anchor=east] at (-0.5,1) {$\theta_i+\beta_i$};
				\draw[dashed,black!55] (-0.45,2) -- (5.1,2);
				\node[anchor=east] at (-0.5,2) {$y$};
				\draw[line width=1.2pt] (0.5,2) -- (2.5,2);
				\draw[line width=1.2pt] (2.85,2) -- (4.85,2);
				\draw[dotted] (0.5,2) -- (0.5,2.38);
				\draw[dotted] (2.85,2) -- (2.85,2.38);
				\draw[<->] (0.5,2.38) -- node[above] {$\gamma_i-a\beta_i/d$} (2.85,2.38);
				\draw[dotted] (2.6,0) -- (2.6,1);
				\draw[<->] (0,-0.55) -- node[below] {$\gamma_i=c_{i+1}-c_i$} (2.6,-0.55);
				\draw[<->] (3.35,4.42) -- node[above] {$R$} (5.35,4.42);
				\node[anchor=north] at (0,-0.02) {$c_i$};
				\node[anchor=north] at (2.6,-0.02) {$c_{i+1}$};
			\end{tikzpicture}
			\caption{Neighboring rectangles in $(\xi,y)$ coordinates, with $\beta_i>0$ shown. At a common height their left endpoints differ by $\gamma_i-a\beta_i/d$; both intervals have width $R$, giving \eqref{eq:centergap}. Across a phase boundary, first translate the second rectangle vertically by $d$.}
			\label{fig:sheared-packing}
		\end{figure}
		
		Suppose $k=q+1$ runs have total length greater than $qd+w$. Put $m_i=d-\ell_i$ and $M=\sum_i m_i<d-w\le d$. Thus every pair has $\ell_i+\ell_j>d$ and overlapping vertical phases. We normalize the coordinates so that $c_i\in[0,c)$; they are distinct, since equal coordinates would give overlapping intervals whose left endpoints differ by less than $a<R$. Order them and put $c_{k+1}=c_1+c$, $\theta_{k+1}=\theta_1+u$, $\ell_{k+1}=\ell_1$, and $\gamma_i=c_{i+1}-c_i>0$.
		
		Choose $\beta_i\equiv\theta_{i+1}-\theta_i\pmod d$ in $[\ell_i-d,\ell_i)$. The corresponding vertical intervals overlap: if $\beta_i\ge0$ this follows from $\beta_i<\ell_i$; otherwise $\beta_i+\ell_{i+1}\ge\ell_i+\ell_{i+1}-d>0$. Their left endpoints differ by $\gamma_i-a\beta_i/d>-a>-R$, so disjointness forces
		\begin{equation}\label{eq:centergap}
			\gamma_i\ge R+\frac ad\beta_i.
		\end{equation}
		Since $\beta_i\ge\ell_i-d=-m_i$, we have $\sum_i\beta_i\ge-M$. Also $\sum_i\beta_i=u+nd$ for an integer $n$. Summing \eqref{eq:centergap} gives $kR\le g-an$. Since $kR>g$, we get $n\le-1$ and $M\ge d-u$, contradicting $M<d-w$ if $w=u$. If $w=0$, then $b+a<R$, so $kR>g+a$, giving $n\le-2$ and $M\ge2d-u>d$, again a contradiction. At most $q$ runs have total length at most $qd$, which also proves the assertion for fewer runs.
	\end{proof}
	
	\begin{lemma}\label{lem:integer}
		Let $q,d\ge1$, $0\le w<d$, and let $\ell_1\ge\ell_2\ge\cdots>0$ be integers with $L=\ell_1\le d$. Append zeros to the list when necessary. If $$\sum_i\ell_i\le qd+w+d-L,\qquad
		\sum_{i=1}^{q+1}\ell_i\le qd+w,$$ then $\sum_iT(\ell_i)\le qT(d)+T(w)$.
	\end{lemma}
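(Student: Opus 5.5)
The plan is to prove the inequality by a smoothing (exchange) argument that reduces it to a few explicit extremal configurations. Throughout I use that $T$ is convex and superadditive, $T(x)+T(y)\le T(x+y)$, together with the crude estimate $T(\ell)\le \ell(x+1)/2$ whenever $\ell\le x$. Fix $L$ and write $N=\sum_i\ell_i$, $S_1=\sum_{i\le q+1}\ell_i$, and $x=\ell_{q+1}$. The hypotheses then read $\ell_i\le L$ for all $i$, $S_1\le qd+w$, and $N\le qd+w+d-L$. The first $q$ entries behave well, since $\sum_{i\le k}\ell_i\le kL\le kd$ for $k\le q$. All the difficulty lies in the entry $\ell_{q+1}$ and the tail $\ell_{q+2},\ell_{q+3},\dots$, whose total is at most $N-S_1$.

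\emph{Step 1 (head).} Inside the first $q+1$ entries I move mass upward, toward the cap $L$. By Karamata's inequality this does not decrease $\sum T(\ell_i)$, preserves $S_1$ and $N$, and keeps the list sorted, provided $x$ is still at least every tail entry. The smoothing stops when $\ell_1=\dots=\ell_k=L$ for some $k\le q$ and the remaining head entries are as unequal as the ordering and the bound $S_1\le qd+w$ permit. In the extremal case the head is $(L,\dots,L,y,x)$ with $qL+x\le qd+w$.

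\emph{Step 2 (tail).} Every tail entry is at most $x$ and the tail total is at most $\tau:=qd+w+d-L-S_1$, so
\[
\sum_{i>q+1}T(\ell_i)\le \frac{x+1}{2}\,\tau .
\]
This is attained up to rounding by $\lfloor\tau/x\rfloor$ copies of $x$ plus a remainder. Combining with Step 1 gives an explicit upper bound $\Phi(L,x)$ for $\sum_i T(\ell_i)$. Within its domain, $\Phi$ is a quadratic in each variable separately.

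\emph{Step 3 (optimization).} It remains to show $\Phi(L,x)\le qT(d)+T(w)$ on the domain $1\le x\le L\le d$, $qL+x\le qd+w$. Since $\Phi$ is convex in $x$ on each piece, the maximum occurs at the endpoints $x\in\{1,\,\min(L,\,qd+w-qL)\}$.
\begin{itemize}
\item At $L=d$ we get $x\le w$ and $\tau\le w-x$. The bound becomes $qT(d)+T(x)+\tfrac{x+1}{2}(w-x)$, which is at most $qT(d)+T(w)$ by superadditivity.
\item For $L<d$ the loss $q\bigl(T(d)-T(L)\bigr)\ge q(d-L)(L+1)$ in the head must be weighed against the extra tail room $d-L$, which contributes at most $\tfrac{L+1}{2}(d-L)$. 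Since $q\ge1$, the loss dominates.
\end{itemize}
These inequalities reduce to elementary checks that use $w<d$, and they cover the remaining endpoint cases.

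\emph{Main obstacle.} The crude bound $T(\ell)\le\ell(L+1)/2$ applied to all entries fails by exactly $w(d-w)/2$ at $L=d$. The proof must therefore genuinely use the $(q+1)$-run constraint to force the $(q+1)$-st entry and the tail to have size about $w$, rather than $d$. The delicate point is making the smoothing of Step 1 compatible with the ordering against the tail, that is, never pushing $\ell_{q+1}$ below a tail entry. If that becomes messy, I would instead proceed by induction on the number of nonzero entries: repeatedly merge the two smallest entries while both constraints persist, or otherwise transfer one unit from a smaller entry to a larger one. Each such move does not decrease $\sum T$. The induction terminates at a configuration with at most $q+2$ nonzero entries, which is then checked directly.
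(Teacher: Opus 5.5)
There is a genuine gap. The bound $\Phi(L,x)$ produced by Steps 1--2 is not always below $qT(d)+T(w)$, so Step 3 cannot be carried out.

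\textbf{The tail estimate is too weak.} The culprit is the linear estimate $\sum_{i>q+1}T(\ell_i)\le\frac{x+1}{2}\tau$. It loses a lot whenever $\tau$ is small compared with $x$, and the lemma is tight in exactly such configurations.
\begin{itemize}
\item Take $q=1$, $d=4$, $w=2$, $L=x=3$. Then $qL+x=6=qd+w$ and $\tau=4+2+4-3-6=1$.
\item Your bound gives $\Phi(3,3)=T(3)+T(3)+2=14$, whereas $qT(d)+T(w)=13$.
\item This point is one of your own endpoints, since $x=3=\min(L,qd+w-qL)$.
\end{itemize}
The true maximum here is $13$, attained by $(3,3,1)$. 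More generally, $(d-1,d-1,1)$ with $q=1$ and $w=d-2$ is an equality case of the lemma. Your estimate charges the single tail unit $d/2$ instead of $T(1)=1$, so it overshoots by $(d-2)/2$.

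\textbf{The second bullet misses the mechanism.} For $L<d$, the $(q+1)$-st entry is constrained only by $x\le\min(L,\,w+q(d-L))$. So the freed room is $(q+1)(d-L)$, not $d-L$. The gain $T(x)-T(w)$ (here $3$) must also be paid for by the head loss $q(T(d)-T(L))$ (here $4$). After that the tail has essentially no slack left, so the claim that ``the loss dominates'' is unjustified.

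\textbf{Step 1 is also incomplete, as you note.} Moving mass out of $\ell_{q+1}$ is blocked once $\ell_{q+1}$ equals the largest tail entry. The head then need not reduce to $(L,\dots,L,x)$, and an extra variable remains.

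The fallback does not rescue this. Entries are capped at $L$, so lists made of many copies of a small $L$ admit no merge or transfer. For example, take $q=1$, $w=0$, and $d-1$ copies of $2$. The claim that the process ends with at most $q+2$ nonzero entries is therefore false.

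\textbf{What is needed.} You need exact concentration, meaning copies of the cap plus a remainder, as in $T(L)+T(w+x)+T(x)$ or $2T(L)+T(z)$. You also need a way to peel off the head. The paper does the latter by induction on $q$:
\begin{itemize}
\item If $L\le w$, the crude bound $2\sum_i T(\ell_i)\le(L+1)(qd+w+d-L)$ already suffices.
\item If $L>w$, it deletes $\ell_1=L$ and replaces $w$ by $w'=w+(d-L)<d$. This turns both hypotheses into those for $q-1$.
\item The identity $T(d)+T(w)-T(L)-T(w')=(d-L)(L-w)\ge0$ closes the inductive step, leaving only a short case analysis at $q=1$.
\end{itemize}
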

	\begin{proof}
		We induct on $q$. If $L\le w$, then $2\sum_iT(\ell_i)\le(L+1)(qd+w+d-L)$, and the difference from $qd(d+1)+w(w+1)$ is
		$$ (d-L)(qd-L-1)+w(w-L)\ge0. $$
		Suppose $L>w$ and put $x=d-L$. For $q\ge2$, delete $L$ and put $w'=w+x<d$. Subtracting $L$ from the two bounds gives the hypotheses for $q-1,w'$, since the new largest part is at most $L$; the case of an empty remainder is immediate. Induction gives the result because $T(d)+T(w)-T(L)-T(w+x)=x(L-w)\ge0$.
		
		It remains to prove the case $q=1$. Put $h=L-w>0$. For a fixed total and upper bound on each part, moving units from smaller parts to larger ones increases the triangular sum. If $x<h$, the remaining parts are at most $w+x$ and total at most $w+2x$, so
		$$\sum_iT(\ell_i)\le T(L)+T(w+x)+T(x)\le T(d)+T(w),$$
		the last difference being $xh-T(x)\ge0$.
		
		If $x\ge h$, put $y=x-h$ and $z=2x-h$; the total bound is $2L+z$. For $z<L$, concentration gives $\sum_iT(\ell_i)\le2T(L)+T(z)$, and
		$$2[T(d)+T(w)-2T(L)-T(z)]
		=h(h-1)+y(y+1)+2y(L-1-z)\ge0.$$
		For $z\ge L$, we have $2y\ge w$ and $2\sum_iT(\ell_i)\le(L+1)(2d+w-L)$. The difference from $d(d+1)+w(w+1)$ is
		$$x(x-1)-wh=h(h-1)+y(y-1)+h(2y-w)\ge0.$$
	\end{proof}
	
	\begin{proof}[Proof of Theorem \ref{thm:universal-run-bound}]
		For $d\ge2$, the bound follows by combining Lemmas \ref{lem:anchor}, \ref{lem:longruns}, and \ref{lem:integer}. For $d=1$, all runs have length one and cyclic separation gives $|U|\le\lfloor s/R\rfloor=q+w$, with equality at $\{0,R,\ldots,(q+w-1)R\}$.
		
		For $d\ge2$, the construction in Lemma \ref{lem:positive-construction} uses only the orbit gaps above and has length coefficient $qT(d)+T(w)$ as $m$ varies. Saturation and the upper bound force equality.
		
	\end{proof}

	\section{Maximum length}\label{sec:maximum}
	
	Write $\Phi_D(s,t)$ for the right side of \eqref{eq:main-closed}, extended by symmetry. We prove attainment first, then the upper bound by recurrence.
	
	\subsection{Partitions attaining the formula}
	\begin{lemma}\label{lem:positive-construction}
		There is a $D$-distinct $(s,t)$-core of length equal to the right side of \eqref{eq:main-closed}.
	\end{lemma}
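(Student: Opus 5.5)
Construct an explicit allowed support $S$ and evaluate its saturated length. In the ordering by $+t$, take the first $d$ columns of a cycle. Their residues are the points of $P_d=\{0,r,\ldots,(d-1)r\}$ translated by some fixed residue $z$. Inside each orbit gap of length $g$ or $g+a$, place $\lfloor G/R\rfloor-1$ further residues at spacing $R$; by \eqref{eq:gapcapacity} these are exactly the translates of $P_d$ spaced $R$ apart. Concretely, use $q$ translates $z+kR+P_d$ ($0\le k<q$), which gives $q$ full runs of length $d$. When $\chi=1$, add one further translate of the $u$ points that begin (or end, according to $\varepsilon$) a long gap; this gives a short run of length $u$. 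Cyclic $R$-separation then follows from the gap description: the long gaps have length $g+a\ge qR+R$ exactly when $a+b\ge R$. The saturated heights \eqref{eq:run-heights} give each full run the length $\sum_{j=1}^d\lfloor (jt+\rho)/s\rfloor$. Using $dt-es=\varepsilon a$ and the fact that the preceding residue $\rho$ is at distance less than $s/d$ in the appropriate direction, this sum should equal $\sum_j\lceil je/d\rceil=K$. Likewise the short run should contribute $\sum_j\lceil jv/u\rceil=C$, via $ut-vs=-\varepsilon g$.

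Verify these two sums directly. A run of residues $\rho_{j}$ with predecessor residue $\rho$ has heights $\lfloor(jt+\rho)/s\rfloor$. Choose the translate $z$ so that $\rho$ lies in a suitable window, for example $\rho\in[s-s/d,s)$ when $\varepsilon=1$. Then $\lfloor(jt+\rho)/s\rfloor=\lceil je/d\rceil$ for $1\le j\le d$, because $jt/s=je/d+\varepsilon ja/(ds)$ and the perturbation is too small to cross an integer. The $q$ stacked translates differ by multiples of $R$ in $\rho$. All of them must stay in the window, and this is where $q\ge1$ and $g=qR+b$ enter, since $qR\le g<s/d$ roughly. The total so far is $qK$, or $qK+C$ when $\chi=1$.

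Next obtain the extra $+1$. If $\max(a,b)\ge2$ (respectively $b\ge2$ when $\chi=1$), add one more column of height one, placed as the capped column $\alpha$ allowed by Lemma~\ref{lem:chain}. Its residue sits at circular distance at most $D$ from the largest selected residue, but at least $R$ from its neighbours in increasing order. The slack $b\ge2$ inside a gap, or $a\ge2$ at the wraparound, lets us insert a residue that is linearly $R$-separated yet cyclically too close, which is precisely the capped situation. It also must not lie at residue $0$ and must avoid breaking the saturation of the neighbouring runs. Its height one is feasible because the increment $c_j\ge1$.

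The hardest step is making the choices of translation and insertion point simultaneously compatible: saturated run heights that equal $K$ and $C$ exactly, with no run losing height from a bad predecessor residue, and a valid capped extra column. I would handle $\varepsilon=\pm1$ separately, reflecting residues via $x\mapsto s-x$ if needed, and check the degenerate cases $u=0$ (where $g=s$ and runs are single columns of height $m$) and $d=1$ by hand.
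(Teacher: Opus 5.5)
Your base construction ($q$ translates of $P_d$ spaced by $R$, plus, when $\chi=1$, one translate of the $u$ points bounding the long gaps) matches the paper's full blocks with tops $it-c-jR$ ($0\le j<q$) and residual block with tops $it-c_0$. With suitable shifts it yields $qK+\chi C$. The gap is the bonus $+1$. You always obtain it from one more capped column of height one, but in some cases no such column can coexist with blocks of the required cardinalities.

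Take $(s,t,D)=(8,13,2)$. Here $(g,a)=(3,2)$, $(d,e)=(2,3)$, $(u,v)=(1,2)$, $\varepsilon=1$, $q=1$, $b=\chi=0$, and the target is $K+1=6$. A full run plus a capped column needs three $3$-separated residues in $\{1,\dots,7\}$, which forces $\{1,4,7\}$. Since $t\equiv5\pmod 8$, this support is the single run $7\to4\to1$ with $1$ capped, of saturated length $1+3+1=5$. The unique core of length $6$ is $\{1,9,17,25\}\cup\{4,12\}$: a single run $4\to1$ with predecessor residue $7$ and heights $2,4$, with no capped column. Similarly, at $(11,16,2)$ one has $\varepsilon=-1$, $\chi=1$, $b=2$ and target $qK+C+1=7$. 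The only allowed support with four residues is $\{1,4,7,10\}$, and its saturated length is $6$.

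The missing idea is that the bonus may be an extra bead at the top of a run rather than an extra column.

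For $\chi=0$, $a\ge2$, $\varepsilon=1$, the paper gives the first full block tops $it-1$, i.e.\ predecessor residue $s-1$. Because $dt=es+a$ with $a\ge2$, its $d$th column has height $\lceil(dt-1)/s\rceil=e+1$, so the block has $K+1$ elements.

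For $\chi=1$, $\varepsilon=-1$, $b\ge2$, it gives the residual block tops $it-c_0$ with $c_0=c+qR<g$. Then $ut=vs+g$ makes the last residual height $v+1$, so that block has $C+1$ elements. At $(11,16,2)$ this gives the core $\{1,4,9,12,15,20,31\}$. The capped column $\{1\}$ is used only in the remaining bonus cases.

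Your height computation hides exactly this. For $\varepsilon=1$ and $\rho$ in your window, the $d$th height is $e+\lfloor(a+\rho)/s\rfloor$. This equals $e+1$, not $\lceil de/d\rceil=e$, once $\rho\ge s-a$; and $\rho=s-a$ places that column on residue $0$. So the shifts must be chosen case by case. Where there is no room for a cap, the $+1$ has to come from this overshoot.
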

	\begin{proof}
		If $u=0$, then $d=v=1$, $g=s$, and $t=es+a$ with $1\le a\le D$, so \cite[Lemma 23]{BSS} applies. Assume $u>0$.
		
		Put $$F_k(c)=\{it-c-zs:1\le i\le k,\ z\in\mathbb Z_{\ge0},\ it-c-zs>0\},\qquad F_0(c)=\varnothing.$$ Take
		$$B=E\cup\bigcup_{j=0}^{q-1}F_d(c+jR)\cup F_{\chi u}(c_0),$$
		with these choices:
		$$\begin{array}{c@{\quad}l|ccc}
			&&c&c_0&E\\\hline
			\text{(i)}&\chi=0,\ a\ge2,\ \varepsilon=1&1&0&\varnothing\\[2mm]
			\text{(ii)}&\chi=0,\ a\ge2,\ \varepsilon=-1&-1&0&\{1\}\\[2mm]
			\text{(iii)}&\chi=0,\ a=1&c_\varepsilon&0&E_b\\[2mm]
			\text{(iv)}&\chi=1,\ \varepsilon=-1&1+\mathbf1_{\{b=1\}}&c+qR&\varnothing\\[2mm]
			\text{(v)}&\chi=1,\ \varepsilon=1,\ b=1&R+1&1&\varnothing\\[2mm]
			\text{(vi)}&\chi=1,\ \varepsilon=1,\ b\ge2&\max(R-1,a+1)&-1&\{1\}.
		\end{array}$$
		Here $c_+=2$, $c_-=1$, and $E_b=\{1\}$ if $b\ge2$, otherwise $\varnothing$. These cases exhaust $b\ge1$ when $\chi=1$, including $d=u=1$, where $\varepsilon=-1$ and the single gap is $g+a$.
		
		Put $A_i=it\bmod s$, $1\le i\le d$. In $P_d+t$, the long gaps follow $A_1,\ldots,A_u$ for $\varepsilon=1$ and precede them for $\varepsilon=-1$. The endpoints $A_d=\varepsilon a\bmod s$, $A_u=-\varepsilon g\bmod s$ enclose zero in a gap of length $g+a<2g$, so $(\min A_i,\max A_i)$ is $(a,s-g)$ or $(g,s-a)$, respectively.
		
		Subtraction of $s$ stays in a column, and subtraction of $t$ moves from $i$ to $i-1$, requiring the predecessor $1\in E$ only for shift $-1$. The residue bounds below exclude zero, proving both core conditions.
		
		The heights in $F_k(c)$ are $\lceil(it-c)/s\rceil$. Pairing fractional parts gives, for coprime $p,k>0$,
		\begin{equation}\label{eq:ceiling-block}\sum_{i=1}^{p}\left\lceil\frac{ik}{p}\right\rceil=\frac{pk+p+k-1}{2}.\end{equation}
		Hence $K=\sum_{i=1}^d\lceil ie/d\rceil$ and $C=\sum_{i=1}^u\lceil iv/u\rceil$. The perturbation in $t/s=e/d+\varepsilon a/(ds)$ crosses no integer for $i<d$, and adds one at $i=d$ exactly for $\varepsilon=1$. The orbit extrema then give cardinality $K$ for each full block, except $K+1$ for $F_d(1)$ in (i).
		
		Similarly, $t/s=v/u-\varepsilon g/(us)$ gives the residual cardinality $C+\mathbf1_{\{\varepsilon=-1,b\ge2\}}$. For the negative sign, the smallest residual residue is $g$ and all others are at least $2g$, so shifting by $c+qR$ removes one element precisely when $b=1$. For the positive sign, residual residues lie between $a+g$ and $s-g$, so shifting by $1$ or $-1$ does not change the cardinality.
		
		The translates $A_i-c-jR$, $0\le j<q$, form clusters with spacing $R$ and gaps $R+b$ or $R+b+a$. A residual point fits in a long gap at distance in $[R,b+a]$ from its left endpoint.
		
		The residual points lie $R$ before the next cluster in (iv), and $R$ after the preceding one in (v). In (vi) they lie $c+1$ after it, with $R\le c+1\le b+a$ because $c=\max(R-1,a+1)$, $b+a\ge R$, and $b\ge2$. Thus the blocks are cyclically separated.
		
		If $E=\{1\}$, the smallest other residue is at least $R+b+1$, $R+b-1$, or $a+b+R-c$ in (ii), (iii), or (vi), respectively. Each is at least $R+1$, since $b\ge2$ in (iii), (vi) and $c\le a+b-1$ in (vi). Adjoining $1$ therefore preserves ordinary spacing; its height one permits a cyclic conflict with the largest residue. Lemma \ref{lem:chain} gives $D$-distinctness, and the block counts give \eqref{eq:main-closed}.

	\end{proof}
	
	For $(s,t)=(11,15)$ and $D=1$, we have $(g,a)=(3,1)$, $(d,e)=(3,4)$, $(u,v)=(2,3)$, and $q=b=\chi=1$. Construction (v) gives $B=F_3(3)\cup F_2(1)$, with $K=9$, $C=5$, and $\Lambda_1(11,15)=14$; see Figure \ref{fig:full-residual-abacus}.
	
	\begin{figure}[htbp]
		\centering
		\begin{tikzpicture}[x=0.72cm,y=0.63cm,font=\small]
			\foreach \r in {0,...,10}{
				\draw[black!35] (\r,-0.2) -- (\r,3.4);
				\node[anchor=north] at (\r,-0.32) {$\r$};
			}
			\foreach \k in {0,...,3}{
				\node[anchor=east] at (-0.55,\k) {$\k$};
			}
			\node[anchor=east] at (-0.55,3.8) {$k$};
			\node[anchor=east] at (-0.55,-0.42) {$\rho$};
			\foreach \r/\k/\b in {1/0/1,1/1/12,5/0/5,5/1/16,5/2/27,9/0/9,9/1/20,9/2/31,9/3/42}{
				\fill (\r,\k) circle (2.2pt);
				\node[anchor=west,font=\scriptsize] at (\r+0.13,\k) {$\b$};
			}
			\foreach \r/\k/\b in {3/0/3,3/1/14,7/0/7,7/1/18,7/2/29}{
				\filldraw[fill=black!30] (\r,\k) circle (2.2pt);
				\node[anchor=west,font=\scriptsize] at (\r+0.13,\k) {$\b$};
			}
			\fill (1,-1.22) circle (2.2pt);
			\node[anchor=west] at (1.2,-1.22) {full block $F_3(3)$};
			\filldraw[fill=black!30] (6,-1.22) circle (2.2pt);
			\node[anchor=west] at (6.2,-1.22) {residual block $F_2(1)$};
		\end{tikzpicture}
		\caption{The attaining beta-set at $(11,15)$. A bead at $(\rho,k)$ represents the labelled value $\rho+11k$. The full and residual blocks have respective heights $(2,3,4)$ and $(2,3)$.}
		\label{fig:full-residual-abacus}
	\end{figure}
	
	\subsection{The Euclidean recurrence}
	By \eqref{eq:slope} and Theorem \ref{thm:universal-run-bound}, the maximum slope in \eqref{eq:general-upper} is $A=qT(d)+\chi T(u)$.
	
	Put $e_0=e-md$, $v_0=v-mu$. Since $r=e_0g+v_0a$, swapping $(d,u),(e_0,v_0)$ gives the Euclidean matrix for $(r,s)$ with the same $g,a,q,b$, and $$K=mT(d)+\frac{d e_0+d+e_0-1}{2},\qquad
	C=mT(u)+\frac{uv_0+u+v_0-1}{2}.$$ For $v_0>0$, the residual branch and bonus are unchanged. For $v_0=0$, the determinant gives $e_0=u=1$, hence $g=r$, $a=s\bmod r$, and $s=dr+a$. The smaller pair has no residual branch, and its bonus differs by one exactly when $b=1$, $a=D\ge2$. Thus
	\begin{equation}\label{eq:formalrecurrence}
		\Phi_D(s,ms+r)=mA+\Phi_D(r,s)-\kappa,
		\quad
		\kappa=\mathbf1_{\{D\ge2,\ s\bmod r=D,\ r\bmod R=1\}}.
	\end{equation}
	
	If $\kappa=1$, then $r=qR+1$, $s=dr+D$, $D\ge2$, and \eqref{eq:terminal} gives $\Lambda_D(r,s)=qd+1$. If $B_0$ attains this smaller maximum, \cite[Lemma 6]{BSS} gives $B_0\subseteq\{1,\ldots,s-1\}$. There are at most $q$ occupied $r$-residues, each of height at most $d$, except for at most one residue below $D$, whose height may be $d+1$. Equality therefore forces $q$ residues $Y=\{y_1<\cdots<y_q\}$ and
	$$B_0=\bigcup_{j=0}^{d-1}(Y+jr)\ \cup\ \{y_1+dr\},\qquad y_1<D.$$
	The smallest column has height at least two, so $Y$ is cyclically $R$-separated. Its $q$ cyclic gaps sum to $qR+1$ and are therefore $R$ or $R+1$. Thus $B_0$ is maximal as an $R$-separated subset of $\{1,\ldots,s-1\}$: its internal gaps are less than $2R$, its first element is below $R$, and its final exterior gap is $D-1-y_1<R$.
	
	If an original support has intercept $qd+1$, its zero-parameter beta-set is such a $B_0$, contained in its support residues. Maximality forces the support residues to equal $B_0$, so there are no additional zero-intercept columns. In $r$-order it has $q-1$ ordinary $d$-runs and one capped $(d+1)$-run; any further merger would put two elements of $Y$ at cyclic distance $D<R$. Deleting the capped minimum leaves $q$ runs of length $d$, hence slope $qT(d)=A-1$ because $\chi=u=1$. Every support therefore loses at least one from its intercept or slope, and $m\ge1$ gives
	\begin{equation}\label{eq:critical}
		\Lambda_D(s,ms+r)\le mA+\Lambda_D(r,s)-1.
	\end{equation}
	
	\begin{proof}[Proof of Theorems \ref{thm:recurrence} and \ref{thm:closed}]
		We induct on the smaller core parameter, starting with \eqref{eq:terminal}. For $r>D$, we use \eqref{eq:general-upper} or, if $\kappa=1$, \eqref{eq:critical}. Together with the induction hypothesis and \eqref{eq:formalrecurrence}, this gives $$\Lambda_D(s,t)\leq mA_D(s,r)+\Phi_D(r,s)-\kappa_D(s,r)=\Phi_D(s,t).$$ Lemma \ref{lem:positive-construction} gives equality, proving \eqref{eq:main-closed}; then \eqref{eq:formalrecurrence} gives \eqref{eq:main-recurrence}. Section \ref{sec:preliminaries} covers the remaining parameter ranges.
	\end{proof}
	
	Write $\Lambda(s,t)=\Lambda_1(s,t)$. For ordinary distinct parts, the formula takes the following symmetric form.
	
	\begin{corollary}\label{cor:ordinary-distinct}
		Let $s,t\ge2$ be coprime. Choose the unique positive integers $d,e$ with $|td-se|=1$, $2d\le s$, $2e\le t$, and put $$h=\min\left\{\left\lfloor\frac{s}{2d}\right\rfloor,\left\lfloor\frac{t}{2e}\right\rfloor\right\},\qquad u=s-(2h+1)d,\quad v=t-(2h+1)e.$$ Then
		\begin{equation}\label{eq:ordinary-closed}
			\Lambda(s,t)=\begin{cases}
				\frac{h(d+1)(e+1)+(u+1)(v+1)}{2}-h-1&\text{if }u,v>0,\\
				\frac{h(d+1)(e+1)}{2}-h&\text{otherwise.}
			\end{cases}
		\end{equation}
	\end{corollary}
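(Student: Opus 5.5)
The plan is to specialize Theorem \ref{thm:closed} to $D=1$, so $R=2$, and to identify its Euclidean parameters with the symmetric data $d,e,h,u,v$ of the corollary. By symmetry of $\Lambda$ assume $2\le s<t$. With $D=1$ the Euclidean algorithm stops at the remainder $a=\gcd(s,t)=1$, and $g\ge2$ is the preceding remainder. Then \eqref{eq:euclidean-identities} gives $dt-es=\varepsilon$. From $s=dg+u\ge 2d$ we get $2d\le s$, and similarly $2e\le t$.

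\textbf{Step 1: the two pairs $(d,e)$ agree.} The solutions of $dt-se=\pm1$ with $1\le d<s$ are exactly $d_0$ and $s-d_0$, one for each sign. So a solution with $2d\le s$ is unique, except when $s=2$, where the convention $\varepsilon=1$ decides. Hence the theorem's $d,e$ are the corollary's $d,e$. To avoid a clash, write $u',v'$ for the corollary's quantities and keep $u,v$ for those of \eqref{eq:euclidean-matrix}.

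\textbf{Step 2: identify $h$, $u'$ and $v'$.} Write $g=2q+b$ with $b\in\{0,1\}$. Then
\[
s=2qd+(bd+u).
\]
Since $u<d$ when $d\ge2$, and $u\le1$ when $d=1$ (with $u=1$ forcing $b=1$ via $s=g+1$ odd only if needed), we get $0\le bd+u<2d$. Hence $\lfloor s/2d\rfloor=q$. Likewise $t=2qe+(be+v)$, so $\lfloor t/2e\rfloor\ge q$ and therefore $h=q$. It follows that
\[
u'=(b-1)d+u,\qquad v'=(b-1)e+v.
\]
The one point needing care is the edge case $d=1$, which I would check separately, together with the fact that $v\ge1$ from the construction of the coefficients.

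\textbf{Step 3: case comparison.} Here $a=1$, so $\chi=\mathbf1_{\{u>0,\ b=1\}}$, and $\max(a,b)\ge2$ never holds. Recall
\[
K=\tfrac{(d+1)(e+1)}2-1,\qquad C=\tfrac{(u+1)(v+1)}2-1.
\]
\begin{itemize}
\item If $b=1$ and $u>0$, then $\chi=1$ and $u'=u>0$, $v'=v>0$. Theorem \ref{thm:closed} gives $qK+C$, which equals the first line of \eqref{eq:ordinary-closed}.
\item If $b=1$ and $u=0$, then $u'=0$. The theorem gives $qK$, which is the second line.
\item If $b=0$, then $u'=u-d<0$. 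The theorem gives $qK=\frac{h(d+1)(e+1)}2-h$, again the second line.
\end{itemize}

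The main obstacle is the bookkeeping in Step 2: confirming $h=q$ exactly, that the floor on the $t$-side cannot fall below $q$, and that the sign of $u'$ (equivalently of $v'$) matches $\chi$ in the degenerate cases $d=1$ or $u=0$. Everything else is direct substitution.
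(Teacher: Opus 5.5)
Your route is the paper's: specialize Theorem~\ref{thm:closed} to $D=1$, identify the corollary's $d,e$ with the Euclidean ones, show $h=q$, and compare cases. However, Step~2 contains a false claim that your derivation of $h=q$ rests on. The bound $0\le bd+u<2d$ fails when $d=u=1$ and $b=1$, and this case does occur. For $(s,t)=(4,7)$ one has $g=3$, $q=b=1$, $d=u=1$, $e=2$, $v=1$, $\varepsilon=-1$, so $\lfloor s/(2d)\rfloor=2\ne q$. Your parenthetical has the direction backwards: $u=d=1$ together with $b=1$ is exactly the bad case, not an excluded one. Here $h=q$ is still true, but only because the $t$-side floor equals $q$, not merely $\ge q$. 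So ``$\lfloor s/2d\rfloor=q$ and $\lfloor t/2e\rfloor\ge q$'' cannot be the justification. You also placed the obstacle on the wrong side. The $t$-floor can never drop below $q$, since $be+v\ge0$; the danger is the $s$-floor rising above $q$.

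The repair is what the paper does: $0\le u\le d$ and $0<v\le e$, with at least one inequality strict. If $u<d$, then $bd+u<2d$ and your argument works as written. If $u=d$, then $d=u=1$, $\varepsilon=-1$ and $v=e-1$. (If $\varepsilon=+1$, then $t=es+1$, so $t\bmod s=1\le D$ and the algorithm would have stopped with $u=0$.) Hence $t=2qe+(be+e-1)$ with $be+e-1<2e$, so $\lfloor t/(2e)\rfloor=q$. Either way $h=q$, and your formulas $u'=(b-1)d+u$, $v'=(b-1)e+v$ follow. A similar slip occurs in Step~3: for $b=0$ you only get $u'=u-d\le0$, not $<0$; for example, $(s,t)=(3,5)$ gives $u'=0$. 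This one is harmless, since the first case of the corollary requires $u'>0$. With these corrections your case comparison goes through as written.
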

	\begin{proof}
		Assume $s<t$ and denote the Euclidean coefficients by $u_E,v_E$. Since $D=1$, we have $a=1$, $s=dg+u_E$, $t=eg+v_E$, and $g\ge2$, so $d,e$ are the stated coefficients. Moreover, $0\le u_E\le d$, $0<v_E\le e$, with at least one strict upper bound: if $u_E<d$, this follows from $v_E=(eu_E+\varepsilon)/d$; otherwise $d=1$, $\varepsilon=-1$, and $v_E=e-1$. Writing $g=2q+b$, $b\in\{0,1\}$, these bounds give $h=q$ and $u=u_E-(1-b)d$, $v=v_E-(1-b)e$. Both residual parameters are positive exactly when $b=1$ and $u_E>0$. The bonus indicators vanish; when $u_E=0$, we have $v_E=1$ and $C=0$. Substituting $K=((d+1)(e+1)-2)/2$ and $C=((u+1)(v+1)-2)/2$ in \eqref{eq:main-closed} gives the stated formula.
	\end{proof}
	
	For $s\ge2$, $m\ge1$, this recovers \cite{Paramonov}\cite[Lemma 23]{BSS}:
	\begin{equation}\label{eq:ordinary-closed-basic1}
		\Lambda(s,ms+1)=m\left\lfloor\frac{s}{2}\right\rfloor,\qquad
		\Lambda(s,ms-1)=m\left\lfloor\frac{s}{2}\right\rfloor-\mathbf1_{\{2\mid s\}}.
	\end{equation} The case $s=2$ follows directly from the height of its sole nonzero column.
	
	\section{Size candidates and bounds}\label{sec:size}
	
	Henceforth $D=1$, $2\le s<t$ are coprime, with the notation of \eqref{eq:size-parameters}-\eqref{eq:size-residual}.
	
	The columns form one cycle, $\rho_j=jt\bmod s$, with no caps. Since $dt\equiv\varepsilon\pmod s$, Lemma \ref{lem:chain} makes $S\subseteq\{1,\ldots,s-1\}$ allowed exactly when $S\cap(S+d)=\varnothing$ modulo $s$; thus every run has width at most $d$. Write $L=|B|$, $Q=\sum_{b\in B}b$, and $F=Q-\binom L2$.
	
	\begin{lemma}\label{lem:size-saturation}
		Every distinct-part $(s,t)$-core of maximum size has $x_j=0$ or $x_j=x_{j-1}+c_j$ at each column.
	\end{lemma}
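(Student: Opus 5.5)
Lemma \ref{lem:size-saturation} concerns only maximum size, with $D=1$ and a single cycle. The plan is to show that raising one column's height, while keeping the support fixed and keeping \eqref{eq:chain} valid, strictly increases $|\lambda|$. Let $\lambda$ be a distinct-part $(s,t)$-core of maximum size with column heights $x_j$. Suppose some $j$ has $0<x_j<x_{j-1}+c_j$; since $x_0=0$ we have $j\ge1$. I would take such a $j$ and form $x'$ by raising $x_j$ by one. The support $S$ does not change, so the separation condition of Lemma \ref{lem:chain} still holds; for $D=1$ there are no caps, and allowedness is just $S\cap(S+d)=\varnothing$. The constraint at $j$ holds by the choice of $j$. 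The only other affected constraint is at the successor $j+1$, which is $x_{j+1}\le x_j+c_{j+1}$, and raising $x_j$ only loosens it. So $x'$ is again a distinct-part $(s,t)$-core.

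Next I compare sizes using \eqref{eq:beta-statistics}. Raising column $j$ adds the single bead $b_{\rm new}=\rho_j+x_js$ to $B$, which is larger than every existing bead in that column. The size changes by
$$\Delta=b_{\rm new}-L,$$
because $\binom{L+1}{2}-\binom L2=L$. In terms of the partition, the new part is $b_{\rm new}-L$, and each of the beads below it shifts the parts above it by one in total. So it suffices to show $b_{\rm new}>L$; then $\Delta>0$ contradicts maximality.

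To bound $L$, I would use the distinct-part property. By Lemma \ref{lem:beta} with $R=2$, the beads of $B'=B\cup\{b_{\rm new}\}$ are pairwise at distance at least $2$. They are also positive. A set of $L+1$ positive integers that are pairwise at least $2$ apart has maximum at least $2L+1$. The real difficulty is that $b_{\rm new}$ need not be the maximum of $B'$, so this alone does not bound $b_{\rm new}$.

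To handle this I would change which column is raised. Among all columns where raising is possible, I would choose the one where the new bead $b_{\rm new}$ is smallest. I would then instead show directly that $b_{\rm new}\ge$ (number of beads below it) $+1$, counting only beads smaller than it. That suffices, because the size change is
$$\Delta=b_{\rm new}-\#\{b\in B: b<b_{\rm new}\},$$
not $b_{\rm new}-L$: inserting a bead changes the parts by $b_{\rm new}-\#\{b<b_{\rm new}\}$, which is the new part, while the larger beads' parts are unchanged since their count of smaller beads rises by one along with the length. This corrects the computation above, and with it the choice of column is unnecessary. Since all beads are positive and pairwise at least $2$ apart, the beads below $b_{\rm new}$ occupy at most $\lceil (b_{\rm new}-1)/2\rceil<b_{\rm new}$ values. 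Hence $\Delta\ge1$, giving the contradiction and proving the lemma.
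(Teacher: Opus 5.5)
Your check that raising a single column stays feasible is fine. The gap is in the size computation, and it breaks the one-directional strategy. Your first formula $\Delta=b_{\rm new}-L$ is the correct one; it follows directly from \eqref{eq:beta-statistics}. The ``correction'' is wrong. The part attached to a bead $b$ is $b-\#\{b'\in B:b'<b\}$. After inserting $b_{\rm new}$, every bead above it gains one smaller bead: its rank from the top is unchanged while the length grows, so its part drops by one. Summing gives $\Delta=b_{\rm new}-\#\{b<b_{\rm new}\}-\#\{b>b_{\rm new}\}=b_{\rm new}-L$. Moreover, $b_{\rm new}>L$ fails in general, even at a column with slack. Take $s=5$, $t=31$, so $\rho_j=j$ and $c_j=6$. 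The beta-set $B=\{1,3,8,13,18,23,28\}$ gives $(22,18,14,10,6,2,1)$, of size $73$, and has $x_1=1<x_0+c_1=6$. Raising that column inserts $b_{\rm new}=6<L=7$ and gives $(21,17,13,9,5,4,2,1)$, of size $72$. So no argument of the form ``raising a slack column always increases the size'' can prove the lemma.

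The missing idea is to also use the opposite move and compare the two. In the example, lowering the column (deleting bead $1$) increases the size. The paper does this as follows.
\begin{itemize}
\item If $0<x_j<x_{j-1}+c_j$, take the maximal block $j,\ldots,k$ of positive columns whose subsequent inequalities in \eqref{eq:chain} are tight, and let $N=k-j+1<s$.
\item Raising every column of the block by one is feasible because of the slack at $j$.
\item Lowering every column by one is feasible by positivity, and because the column after $k$ is empty or slack. Lowering $x_j$ alone could break a tight inequality at $j+1$, which is why a block is needed.
\item Neither move enlarges the support.
\end{itemize}
With $\tau_i$ the current tops, \eqref{eq:beta-statistics} gives the two size changes $\sum_i(\tau_i+s)-\bigl[\binom{L+N}{2}-\binom{L}{2}\bigr]$ and $-\sum_i\tau_i+\bigl[\binom{L}{2}-\binom{L-N}{2}\bigr]$. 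Their sum is $sN-N^2>0$, so one of the two moves strictly increases the size, contradicting maximality.
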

	\begin{proof}
		If $0<x_j<x_{j-1}+c_j$, take the maximal block $j,\ldots,k$ of positive coordinates whose subsequent inequalities are tight. Adding $1$ to its heights is feasible by the initial slack; subtracting $1$ is feasible by positivity and the slack at any following positive coordinate. Neither change enlarges the support. For $b=k-j+1<s$, \eqref{eq:beta-statistics} makes the sum of the two new sizes exceed twice the old size by $sb-b^2>0$, contradicting maximality.
	\end{proof}
	
	In a saturated run preceded by $p$, its $j$th column contains
	\begin{equation}\label{eq:run-beta}
		\{jt+\rho_p-ks:1\leq k\leq\lfloor(jt+\rho_p)/s\rfloor\}.
	\end{equation} The length and beta-sum are additive over runs.
	
	\subsection{The attaining partitions}\label{sec:size-candidates}
	For $d\ge2$ and $0\le k\le q$, define
	\begin{equation}\label{eq:full-supports}
		S_k=\bigcup_{h=1}^{k}[p_h^\varepsilon+1,p_h^\varepsilon+d], \qquad p_h^+=s-2hd,\quad p_h^-=(2h-1)d.
	\end{equation} These integer intervals have cyclic gaps at least $d$, hence are allowed. By \eqref{eq:run-heights}, column $j$ of run $h$ has height $h_j$ and top $jt-2h+\frac{1-\varepsilon}{2}$. The resulting beta-set $B_k$ has cardinality $kK$; successive components are translates by $-2$, giving
	\begin{align}
		Q_1&=tJ-\frac{s}{2}(Z-K)-\frac{3+\varepsilon}{2}K, \label{eq:Q1-definition}\\
		Q_k&=kQ_1-k(k-1)K,\label{eq:Qk-definition}
	\end{align} Thus $F(B_k)=Q_k-\binom{kK}{2}=F_k$, as defined in \eqref{eq:size-full}.
	
	For $u>0$, put $r_h^\varepsilon=p_h^\varepsilon-\varepsilon d$ for $1\le h\le q$, $r_0^+=s-d$, and $r_0^-=d-u$, and define
	\begin{equation}\label{eq:residual-supports}
		S_{\rm res}= \bigcup_{h=1}^{q}[r_h^\varepsilon+1,r_h^\varepsilon+d] \ \cup\ [r_0^\varepsilon+1,r_0^\varepsilon+u].
	\end{equation} Again all gaps have at least $d$ zeros. The full components give $B_q-1$, while the short run has tops $jt-b_\varepsilon$. Put $c=2q+1$. Since $ut=sv-\varepsilon c$ and $0<b_\varepsilon+\varepsilon jc/u\le c+1<s/u$, the short-run heights are
	$$\left\lfloor\frac{jv}{u}+1-\frac{b_\varepsilon+\varepsilon jc/u}{s}\right\rfloor=\left\lceil\frac{jv}{u}\right\rceil=g_j\qquad(1\le j\le u).$$
	The resulting cardinality is $qK+C$ and the beta-sum is $Q_q-qK+Q_{\rm sh}$, giving \eqref{eq:size-residual}.
	
	For example, at $(s,t)=(10,13)$ the parameters are $(d,e,\varepsilon,q,u)=(3,4,-1,1,1)$ and $(K,J,Z,C)=(9,20,29,1)$. The candidates give $\Lambda(10,13)=10$ and $M(10,13)=115$:
	\begin{center}
		\begin{tabular}{lrr}
			\toprule
			Partition & Length & Size\\\midrule
			$(30,21,19,13,11,9,6,4,2)$ & $9$ & $115$\\
			$(28,19,17,11,9,7,6,5,3,1)$ & $10$ & $106$\\\bottomrule
		\end{tabular}
	\end{center}
	Thus the two maxima need not occur at the same partition. At $(11,15)$, the full moments are again $(K,J,Z)=(9,20,29)$, while $(u,v,C,Q_{\rm sh})=(2,3,5,71)$. This time
	$$F_q=136,\qquad F_{\rm res}=136+71-9(5+1)-\binom52=143,$$
	so the residual candidate, shown in Figure \ref{fig:full-residual-abacus}, is necessary.
	
	Let $A\subseteq B$ consist of the beta-numbers belonging to one run, and put $H=|A|$, $Q_A=\sum_{b\in A}b$, and $L=|B|$. Set $U=Q_A/H+(H+1)/2$. By \eqref{eq:beta-statistics}, deleting this run changes the size by
	$$F(B)-F(B\setminus A)=Q_A-\binom L2+\binom{L-H}{2}=H(U-L).$$
	We call this difference the \emph{contribution} of the run.
	
	Deleting a whole run preserves \eqref{eq:chain}, since no remaining positive column has a deleted positive predecessor. It also preserves distinctness. We may therefore choose, among the partitions of maximum size, one having the fewest parts. Every run of this partition has positive contribution: deleting a run with negative contribution would increase the size, while deleting one with zero contribution would preserve the size with fewer parts. Thus $L<U$ for every run of this chosen partition.
	
	The cases $d=1,2$ are treated separately at the beginning of Section \ref{sec:size-small}. For now assume $d\ge3$, so $q,a\ge1$, $e\ge d+1$, and $\gcd(d,e)=\gcd(s,d)=1$. By Lemma \ref{lem:size-saturation}, every partition of maximum size has a saturated support.
	
	\subsection{Run bounds}\label{sec:size-bounds}
	
	Put $T_2(x)=x(x+1)(2x+1)/6$. The errors $h_j-je/d$ permute $0,1/d,\ldots,(d-1)/d$; their squared sum gives the first identity below, and substitution gives the second:
	\begin{align}
		Z&=\frac{2e}{d}J-\frac{e^2T_2(d)}{d^2} +\frac{(d-1)(2d-1)}{6d},\label{eq:kernel-Z}\\
		\mathcal A&:=\frac{eJ-\frac d2(Z-K)}K=\frac{(K+1)(de+2K)}{6K}.\label{eq:size-scaled-full-sum}
	\end{align} Also
	\begin{equation}\label{eq:size-full-average-bounds}
		K>3(d-1),\qquad \frac{K+2}{2}<\mathcal A<\frac{2K}{3}.
	\end{equation} These follow from $K\ge d(d+3)/2>3(d-1)$ and $K+3\le de\le2K-6$ in the expression for $\mathcal A$.
	
	Put $\mu=Q_1/K$ and $w=\mu-2q+1$. Substituting $dt-se=\varepsilon$ in \eqref{eq:Q1-definition} and using $0<J/(dK)<1$ gives
	\begin{equation}\label{eq:size-weight-bounds}
		\frac{s}{d}\mathcal A-2q-1<w<\frac{s}{d}\mathcal A-2q.
	\end{equation}
	
	\begin{lemma}\label{lem:short-mean-elementary}
		\label{global-short-mean}\label{length-specific-actual-mean} A saturated run of $1\le\ell<d$ columns has cardinality $H$ and beta-sum $Q$ satisfying $$H\le\frac{e-1}{d}T(\ell)+\ell,\qquad \frac QH<M_\ell:=\frac sd\left[ \frac{(e-1)(2\ell+1)}6+\frac d2\right]<w.$$
	\end{lemma}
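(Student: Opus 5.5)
We bound the saturated run by comparing it, column by column, with the full-run heights $h_j=\lceil je/d\rceil$. Take a run of $\ell<d$ columns preceded by an omitted residue $\rho_p$. By \eqref{eq:run-beta}, column $j$ has height $\lfloor(jt+\rho_p)/s\rfloor$. Write $t/s=e/d+\varepsilon/(ds)$.

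Allowedness ($S\cap(S+d)=\varnothing$) says that the run sits in a gap of the support. Since $d$ is the first return of the orbit to within distance one of zero, the preceding residue $\rho_p$ is constrained relative to the orbit points $it\bmod s$, $i\le\ell$. The key claim is
\[
\left\lfloor\frac{jt+\rho_p}{s}\right\rfloor\le\left\lfloor\frac{je}{d}\right\rfloor+1,
\]
that is, the height exceeds $je/d$ by at most one after rounding. To prove it, use $jt\bmod s=((je\bmod d)s+\varepsilon j)/d$ from Section \ref{sec:run-bound}. Adding $\rho_p<s$ can carry at most once. Moreover, for $\ell<d$, the residues $\rho_p+it$, $i\le \ell$, avoid the wraparound pattern that would give $\lceil je/d\rceil+1$ at every column. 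Summing $\lfloor je/d\rfloor\le (je-\{\text{residue}\})/d$ and exploiting that the fractional residues $(je\bmod d)/d$, $j\le\ell$, are distinct (because $\gcd(d,e)=1$), we should get
\[
H\le \frac{e}{d}T(\ell)-\frac{1}{d}T(\ell-1)+\ell\le\frac{e-1}{d}T(\ell)+\ell .
\]
The second inequality uses the fact that distinct residues in $\{0,\ldots,d-1\}$ sum to at least $T(\ell-1)$, which I expect to be slightly lossy but sufficient.

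For the mean, the tops of the columns are $jt+\rho_p-s\le jt$, and each column's elements form an arithmetic progression with step $s$. So column $j$ contributes beta-sum about $x_j(jt+\rho_p)-s\binom{x_j+1}{2}$, with mean roughly $s(x_j+1)/2$ plus the offset. I would bound $Q/H$ by a weighted average of the column means $(s/2)(x_j+1)$. The weights $x_j$ favor tall columns, and the resulting quantity is like $\sum x_j^2/\sum x_j$. Using $x_j\approx je/d$, this gives roughly $s\,[(e)(2\ell+1)/(6d)+1/2]$. The delicate part is making the error terms strict, replacing $e$ by $e-1$ to reach $M_\ell$. I would handle this by applying the exact bound on $x_j$ above and a Chebyshev-sum comparison, controlling the offset $\rho_p/s<1$ through the floor.

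Finally, $M_\ell<w$ reduces, via \eqref{eq:size-weight-bounds}, to checking $M_{d-1}\le (s/d)\mathcal A-2q-1$, since $M_\ell$ increases in $\ell$. We expand $\mathcal A=(K+1)(de+2K)/(6K)$ with $K=(de+d+e-1)/2$ and use $s\ge 2dq$ to absorb $2q+1$. The comparison amounts to $\mathcal A-[(e-1)(2d-1)/6+d/2]\ge d(2q+1)/s$. Here the left side is of order $e/6$, while the right side is at most about $1+d/s$, and the hypothesis $e\ge d+1\ge 4$ should close it.

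The main obstacle is the mean estimate: keeping the inequality strict with the $e-1$ correction rather than $e$. I would first try the cleanest route, writing $Q/H$ exactly as $\sum_j x_j(\text{top}_j - s(x_j-1)/2)/\sum x_j$ and bounding each piece.
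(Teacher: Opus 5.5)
Your height bound and the final comparison $M_\ell<w$ can be repaired (see the second paragraph). The core estimate $Q/H<M_\ell$, however, has a genuine gap, and the route you sketch cannot close it. Bounding $Q/H$ by the $x_j$-weighted average of $(s/2)(x_j+1)$, with $x_j\le\lceil je/d\rceil$, throws away the decisive information. The exact column mean is $jt+\rho_p-s(x_j+1)/2=s(x_j-1)/2+b_j$, where $b_j\in(0,s)$ is the bottom bead, and a column that is tall relative to $je/d$ necessarily has a small $b_j$. The loss is of leading order, not an $e$ versus $e-1$ correction. At $(s,t)=(11,15)$, with $d=3$ and $e=4$, the one-column run at column $3$ (predecessor residue $8$) has beads $\{1,12\}$. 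Your bound gives $33/2$, whereas $M_1=11$ and the true mean is $13/2$. For $\ell=1$ there is nothing to average, so no Chebyshev-type refinement can help.

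The missing idea is to encode the predecessor by a phase. For $\varepsilon=1$, write $d\rho_p=sh+p$ with $h=ep\bmod d$, so every bead is $jt+\rho_p-ks=\frac sd(je+h-kd)+\frac{p+j}d$. Column $j$ then has height $(je+h-r_j)/d$ and mean $\frac s{2d}(je+h+r_j-d)+\frac{p+j}d$. Here the residues $r_j=(je+h)\bmod d$ are distinct and differ from $h$, so height and bottom are governed by the same residues. Summing gives $Q<(s/d)(V+H)$, with $V$ the sum of the kernel values $je+h-kd$. This replaces the ratio bound by the linear inequality $V\le cH$, where $c=(e-1)(2\ell+1)/6+d/2-1$. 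Both $dH$ and $2dV$ are explicit polynomials in $h$ and the $r_j$, and $V-cH$ is increasing in each $r_j$. Evaluating at the largest $\ell$ residues avoiding $h$ gives a nonpositive closed form; for $\ell=1$ the condition is just $h+r_1\le 2d-3$. The negative sign is analogous, with bead errors $(s-p-j)/d$, or $-j/d$ at $p=0$.

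Two smaller repairs. First, your height chain is false as written: $\frac edT(\ell)-\frac1dT(\ell-1)+\ell$ exceeds $\frac{e-1}dT(\ell)+\ell$ by $\ell/d$. Because $\gcd(d,e)=1$ and $1\le j<d$, the residues $je\bmod d$ are $\ell$ distinct elements of $\{1,\ldots,d-1\}$. Their sum is therefore at least $T(\ell)$, which gives exactly the stated bound. Your key claim itself needs only $\lfloor jt/s\rfloor=\lfloor je/d\rfloor$ for $1\le j<d$ together with $\rho_p<s$; the remarks on allowedness and wraparound are unnecessary. Second, in the last step the difference is exactly $\mathcal A-[(e-1)(2d-1)/6+d/2]=e/3+de/(6K)$. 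An estimate of size $e/6$ would not suffice: at $(11,15)$ it gives $2/3<9/11=d(2q+1)/s$. With $e/3$, the comparison reduces to $se>3d(2q+1)$, which follows from $s\ge 2dq+1$, $e\ge d+1$, and $d\ge3$.
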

	\begin{proof}
		Let $p$ precede the run. Put $h=ep\bmod d$ for the positive sign, $h=(ep-1)\bmod d$ for the negative sign with $p>0$, and $h=0$ at the exceptional negative predecessor $p=0$. Write $r_j=(je+h)\bmod d$ and $H_j=(je+h-r_j)/d$. The $r_j$ are distinct and exclude $h$. Let $V$ be the sum of $je+h-kd$, $1\le k\le H_j$. Each actual beta-number differs from its scaled kernel value by $(p+j)/d$, $(s-p-j)/d$, or $-j/d$, respectively; hence $Q<(s/d)(V+H)$. With $T=T(\ell)$ and $T_2=T_2(\ell)$, $$dH=eT+\ell h-\sum r_j,\qquad 2dV=e^2T_2+e(2h-d)T+\ell(h^2-dh)+\sum(dr_j-r_j^2).$$ The height bound follows from $h\le d-1$ and $\sum r_j\ge\ell(\ell-1)/2$.
		
		Put $c=(e-1)(2\ell+1)/6+d/2-1$ and $\nu=\min(\ell,d-1-h)$. As a polynomial in the residues, $V-cH$ has $\partial(V-cH)/\partial r_j=(d-2r_j+2c)/(2d)>0$. Replacing the residues by the largest $\ell$ values excluding $h$, then summing and using $e\ge d+1$, gives
		$$2d(V-cH)\le-(\ell+1)\nu\left[\ell-\nu+\frac{(e-1)(\ell-1)}3\right]\le0,$$
		proving $Q/H<M_\ell$.
		
		Finally, $M_\ell\le M_{d-1}$ and $$\mathcal A-\left[\frac{(e-1)(2d-1)}6+\frac d2\right]=\frac e3+\frac{de}{6K}.$$ It follows from \eqref{eq:size-weight-bounds} that $w-M_{d-1}>se/(3d)-2q-1>0$, because $se-3d(2q+1)\ge2d[q(d-2)-1]+1>0$.
	\end{proof}
	
	\begin{theorem}\label{weighted-full-placement}\label{global-length-certificate}
		An allowed saturated support with $k$ full runs satisfies
		\begin{equation}\label{global-linear-certificate}
			Q-wL\le Kk(2q-k)\le q^2K.
		\end{equation} Equality in the first inequality holds only at $S_k$, including the empty support when $k=0$. If $L\le qK$, its size is at most $F_q$, with equality only at $S_q$. In particular, if $u=0$, then $M(s,t)=F_q$, attained uniquely at $S_q$.
	\end{theorem}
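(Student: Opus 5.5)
The plan is to split $Q-wL$ into a sum over runs and bound each run separately. Since beta-sum and cardinality are additive over runs, $Q-wL=\sum_{\text{runs}}(Q_A-wH_A)$. Short runs will be discarded first, and then the full runs will be bounded by a placement argument.

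\emph{Short runs.} By Lemma \ref{lem:short-mean-elementary}, a saturated run of width $\ell<d$ has $Q_A/H_A<M_\ell<w$. Hence its term $Q_A-wH_A$ is negative, or zero when $H_A=0$, and deleting it strictly increases $Q-wL$. So only the full runs matter, and equality forces the support to consist of full runs alone.

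\emph{Full runs.} A full run is preceded by some $p$, and by \eqref{eq:run-beta} its beta-set is $\{jt+\rho_p-ks\}$ with heights $\lfloor (jt+\rho_p)/s\rfloor$. Because $\varepsilon$ and $d$ are fixed, these heights are $h_j$, or $h_j$ plus a correction when $\rho_p$ is near one end of the orbit, exactly as in the analysis of $F_d(c)$ in Lemma \ref{lem:positive-construction}. Writing $\rho_p=p$ and grouping the terms, I expect
\[
Q_A-wH_A = Q_1-wK+\delta(p),
\]
where $\delta$ is essentially linear in the position of the run. The reference value is the run of $S_1$, for which $Q_1-wK=K(2q-1)$ by the definition $w=\mu-2q+1$. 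Moving the run by $2$ positions changes every beta-number by $\mp 2$, as in \eqref{eq:Qk-definition}.

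\emph{Placement.} The $k$ full runs occupy disjoint windows of width $d$. Adjacent windows need at least $d$ empty columns between them, since $S\cap(S+d)=\varnothing$. Ordering the runs by position, the $i$th run from the favourable end therefore starts at least $2(i-1)d$ away from the extreme position $p_1^\varepsilon$. This gives it contribution at most $K(2q-1)-2(i-1)K$, with equality only at $p_i^\varepsilon$. Summing over $i=1,\dots,k$ gives
\[
Kk(2q-1)-Kk(k-1)=Kk(2q-k),
\]
with equality only at $S_k$. The bound $Kk(2q-k)\le q^2K$ follows because $k(2q-k)\le q^2$.

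\emph{Main obstacle.} The hardest step is showing that the contribution of a full run really decreases by at least $K$ per unit of displacement $d$, i.e. controlling $\delta(p)$. This requires handling the height corrections at the ends of the cycle, that is, the $+1$ at $j=d$ when $\varepsilon=1$ and the exceptional predecessor $p=0$. I would first verify $\delta$ exactly for translates of $S_1$, and then use the monotonicity of $\lfloor (jt+p)/s\rfloor$ in $p$ together with $w<(s/d)\mathcal A-2q$ from \eqref{eq:size-weight-bounds}.

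\emph{Size consequence.} Write $F=Q-\binom L2\le wL+q^2K-\binom L2$. The right side is a concave quadratic in $L$ whose vertex lies at or beyond $qK$, because $w\ge qK+\ldots$; checking this via \eqref{eq:size-full-average-bounds} and \eqref{eq:size-weight-bounds} is the remaining computation. So on $L\le qK$ the bound is maximized at $L=qK$, where it equals $F_q$ by direct evaluation, using $Q_q=qQ_1-q(q-1)K$. Equality forces both equality in the certificate and $L=qK$, hence $k=q$ and the support is $S_q$.

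\emph{The case $u=0$.} Here the total gap capacity is $qd$, so any allowed support has at most $q$ full runs plus short runs. I would bound $L\le qK$ using Theorem \ref{thm:universal-run-bound} in its length form, $L\le\Lambda$, with $\Lambda=qK$ when $\chi=0$; the bonus term vanishes for $D=1$. The preceding paragraph then gives $M(s,t)=F_q$, uniquely at $S_q$.
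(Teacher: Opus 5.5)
Your outer structure matches the paper's. $Q-wL$ is additive over runs, short runs are discarded by Lemma~\ref{lem:short-mean-elementary}, and full runs are compared with $S_k$. The case $L\le qK$ follows by concavity once $w>qK-\tfrac12$, which the bounds on $w$ and $\mathcal A$ give. For $u=0$, the needed $\Lambda(s,t)=qK$ is Corollary~\ref{cor:ordinary-distinct}, not Theorem~\ref{thm:universal-run-bound}.

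\textbf{The gap.} The genuine gap is the central per-run bound for full runs. You only assert it, and the mechanism you suggest for it does not work.
\begin{itemize}
\item The predecessor residue is $\rho_p=pt\bmod s$, not $p$. It moves by $\pm2$ when $p$ moves by $2d$, but by $t\bmod s$ when $p$ moves by one.
\item Full-run heights are not $h_j$ up to an end correction. For $\varepsilon=1$ the cardinality is $K-d+1+h$, with phase $h$ defined by $d\rho_p=sh+p$. For $\varepsilon=-1$, $p>0$, it is $K-d+h$ with $d\rho_p+p=sh$. So a full run can be deficient by $d-1$.
\item Each extra entry costs $w>qK-\tfrac12$, so $Q_A-wH_A$ is neither linear nor monotone in position or residue. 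At $(s,t)=(10,13)$, with $K=9$, $w=142/9$ and target $p=3$, the full runs preceded by $p=0,\ldots,6$ have cardinalities $6,7,8,9,7,8,9$. Their contributions are $-\tfrac{38}{3},-\tfrac{76}{9},-\tfrac{11}{9},9,-\tfrac{139}{9},-\tfrac{83}{9},0$; the worst is adjacent to the target.
\item Your premise also fails: a run's window extends up to $d-1$ columns past $p_i^\varepsilon$ on the favourable side. For $\varepsilon=1$ the last two runs can be preceded by $s-d-1$ and $s-3d-1$, so the second is only $d+1$ from $p_1^+$.
\item For $\varepsilon=-1$ the per-run bound is actually false. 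Take $q\ge2$ and a first full run preceded by the exceptional $0$. A second run preceded by $2d$ then has residue $s-2$, cardinality $K$, and entries one below those of the run of $S_1$. Its contribution is $K(2q-1)-K$, which exceeds your bound $K(2q-1)-2K$; only the large loss at $0$ rescues the total.
\end{itemize}

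\textbf{What the paper does instead.} It replaces any monotonicity argument by a direct comparison with the target, with no intermediate placements.
\begin{itemize}
\item For $p_0<\cdots<p_{k-1}$ put $z_i=p_i-2di$, so $0\le z_i\le s-(2k-1)d-1$. The target $z_*$ is $s-2kd$ for $\varepsilon=1$ and $d$ for $\varepsilon=-1$. It has the maximal phase and, within it, the largest residue (for $\varepsilon=-1$, among $z_i>0$).
\item Suppose moving $z_i$ to $z_*$ adds $\Delta\ge1$ entries and raises the old ones by $\delta$. Then $d\delta=s\Delta+\varepsilon(z_*-z_i)\ge s\Delta-d+1$, so $\delta\ge2q\Delta$.
\item Using $w<\frac sd\mathcal A-2q$, $\frac{K+2}2<\mathcal A<\frac{2K}3$ and $K>3(d-1)$, the gain in $Q-wL$ exceeds $\delta(K-\Delta-\mathcal A)+2q\Delta-\mathcal A$. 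This in turn exceeds $2(\Delta-1)(K/3-\Delta)\ge0$. If $\Delta=0$, the residue strictly rises unless the run is already at the target.
\item For $\varepsilon=-1$, the initial block of zero indices must be moved jointly to $d$. This adds $jt-1$ ($1\le j\le d$) and lowers the old entries by one, with gain at least $tT(d)-qK-wd>0$.
\end{itemize}
Without these exchange estimates, neither the certificate nor its equality case is proved.
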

	\begin{proof}
		By Lemma \ref{global-short-mean}, discarding short runs increases $Q-wL$. We may therefore compare the remaining allowed full runs with $S_k$, without requiring compatibility with the discarded runs. For full-run predecessors $p_0<\cdots<p_{k-1}$, put $z_i=p_i-2di$. These indices are nondecreasing, with $0\le z_i\le s-(2k-1)d-1$.
		
		For $\varepsilon=1$, $d\rho_p=sh+p$ gives height $K-d+1+h$; the target $z_*=s-2kd$ has phase $h=d-1$ and the largest residue in that phase. For $\varepsilon=-1$ and $z_i>0$, $d\rho_p+p=sh$ gives height $K-d+h$; take $z_*=d$, of phase $h=d$. If this adds $1\le\Delta\le d-1$ entries, the residue gain $\delta$ satisfies $$d\delta=s\Delta+\varepsilon(z_*-z_i)\ge s\Delta-d+1.$$ Hence $s\Delta/d<\delta+1$, $\delta\ge2q\Delta$, and the old entries rise by $\delta$. By \eqref{eq:size-full-average-bounds}-\eqref{eq:size-weight-bounds}, the weighted gain exceeds
		\begin{align*}
			\delta(K-\Delta-\mathcal A)+2q\Delta-\mathcal A &\ge2q\Delta(K-\Delta-\mathcal A+1)-\mathcal A\\
			&>2(\Delta-1)(K/3-\Delta)\ge0.
		\end{align*} For $\Delta=0$, the gain is positive unless the run is already at the target; no intermediate placements are needed.
		
		For the negative sign, an initial block of $c\le k\le q$ zero indices may remain. Moving it to $d$ adds $jt-1$, $1\le j\le d$, to its first component and lowers each old entry by one. Its weighted gain is at least $tT(d)-qK-wd>0$, since
		\begin{align*}
			tT(d)-qK-wd &>s[K-\mathcal A-(d-1)/2]-qK+2qd-(d+1)/2\\
			&>q(d^2-2d+3)-(d+1)/2>0.
		\end{align*} Here $s\ge2qd$, $\mathcal A<2K/3$, and $K>3(d-1)$. At $S_k$, \eqref{eq:Qk-definition} gives \eqref{global-linear-certificate} and its equality condition.
		
		The bounds on $w,\mathcal A$ give $w>qK+\mathcal A/d-1>qK-1/2$. Thus, for $L\le qK$,
		$$F_q-F\ge(qK-L)(w+\tfrac12-qK)+\frac{(qK-L)^2}{2}\ge0,$$
		with equality only at $S_q$. For $u=0$, Corollary \ref{cor:ordinary-distinct} gives $\Lambda(s,t)=qK$, so the preceding bound proves the final assertion.
	\end{proof}
	
	\subsection{Complete residual profiles}\label{sec:size-placement}
	
	Assume $d\ge2$, $s=cd+u$, $c=2q+1$, and $0<u<d$. By \eqref{eq:run-beta}, a run of width $a$ preceded by $p$ is $I(at+\rho_p-s)$, where $I(z)=\{z-is-jt:i,j\in\mathbb Z_{\ge0},\ z-is-jt>0\}$. An allowed increase of every generator by an integer $b>0$ gives $B'=(B+b)\sqcup E$, $E\subseteq\{1,\ldots,b\}$. For $L=|B|>0$ and $m=|E|\le b$,
	\begin{equation}\label{eq:common-generator-shift}
		F(B')-F(B)=(b-m)L+\sum_{x\in E}x-\binom m2 \ge(b-m)L+m>0.
	\end{equation}
	
	Suppose the $q+1$ runs have widths $a_i$, all internal gaps have length $d$, and the gap at zero has length $d+\gamma$, where $0\le\gamma<u$. Put $$g=\sum_{i=1}^{q+1}(d-a_i)=d-u+\gamma<d.$$ Their predecessors in physical order are
	\begin{equation}\label{eq:actual-packed-predecessors}
		p_j=p+jd+\sum_{i=1}^ja_i\quad(0\le j\le q),\qquad0\le p<d+\gamma.
	\end{equation} The last occupied column is $s-d-\gamma+p$.
	
	\begin{lemma}\label{lem:complete-phase-normalization}
		If no allowed common increase of the generators is possible, then for $\varepsilon=1$ a short final run is preceded by $s-d$, or a full final run by $s-2d$; for $\varepsilon=-1$ the first run ends at $d$, or $\gamma>0$ and $p=d$. Every complete profile ($\gamma=0$) is dominated by one of these placements, allowing cyclic reordering of its widths.
	\end{lemma}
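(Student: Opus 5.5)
The lemma has two parts. The first identifies exactly when a common increase of the generators is blocked, as a condition on the predecessors in \eqref{eq:actual-packed-predecessors}. The second says every complete profile can be pushed into one of those blocked positions without losing size. The tool is \eqref{eq:common-generator-shift}: whenever an allowed increase by $b>0$ exists, the size strictly increases. So we may keep increasing until no allowed increase remains and then read off the blocked configuration.

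The first step is to translate a common increase of every generator by $b$ into a motion of the support. A run of width $a$ preceded by $p$ has generator $at+\rho_p-s$. Since $dt\equiv\varepsilon\pmod s$, each residue $\rho_p$ is determined by $p$ through the congruence $d\rho_p\equiv\varepsilon p\pmod s$. Increasing all generators by the same $b$ therefore corresponds to shifting every predecessor by the same amount in physical order:
\begin{itemize}
\item for $\varepsilon=1$, a shift of $p$ by $+d$ raises $\rho_p$ by the residue $\rho_d$;
\item for $\varepsilon=-1$, the corresponding shift is $-d$.
\end{itemize}
Because all internal gaps equal $d$ and only the gap at zero has slack $\gamma$, a rigid translation of the whole configuration is allowed exactly when it does not push a column onto residue $0$. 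It must also keep the gap through zero at least $d$ (Lemma \ref{lem:chain}, i.e.\ $S\cap(S+d)=\varnothing$). The blocked cases are then read off from the position of the last occupied column $s-d-\gamma+p$, or of the first run, relative to zero.

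Concretely, for $\varepsilon=1$ the translation is blocked when the final run already touches $s-1$. A short final run of width $a<d$ ending at $s-1$ is preceded by $s-a-1$. We need to compare this with the convention that a short final run is preceded by $s-d$. I would check that the cyclic gap condition with $\gamma$ and $p$ forces the predecessor into this form: equivalently, the final short run sits after a column gap of $d$ and cannot move. A full final run is instead preceded by $s-2d$, which leaves room for the zero gap. For $\varepsilon=-1$ the obstruction is at the left end: either the first run ends at $d$, or, when $\gamma>0$, the configuration starts at $p=d$.

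For the domination statement with $\gamma=0$, the gap at zero also has length exactly $d$. The cyclic arrangement is therefore homogeneous, and any cyclic rotation of the width sequence $(a_1,\dots,a_{q+1})$ gives another allowed complete profile. Starting from an arbitrary complete profile, I would repeatedly apply allowed common increases, each strictly increasing $F$ by \eqref{eq:common-generator-shift}. When blocked, if the blocked run is not the one the normal form requires, I would rotate the widths cyclically so that the short (or full) run occupies the final (resp.\ first) slot, and continue. Termination holds because generators are bounded above by the column constraints.

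I expect the main obstacle to be the precise bookkeeping of why translation by $\pm d$ in physical position equals a uniform increase $b>0$ of all generators, including runs that wrap past zero. Verifying $E\subseteq\{1,\dots,b\}$ in those wrapped runs is the delicate part. I would handle it case by case in $\varepsilon$ using $d\rho_p=sh\pm p$ as in the proof of Theorem \ref{weighted-full-placement}.
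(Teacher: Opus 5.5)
Your dictionary is right in spirit: since $dt\equiv\varepsilon\pmod s$, translating every column index by $\varepsilon bd$ raises every residue by $b$ modulo $s$. The blocking analysis you build on it, however, fails at two points.

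First, such a translation raises each generator $at+\rho_p-s$ by exactly $b$ only if no predecessor residue wraps, i.e.\ $\rho_p+b<s$ for every predecessor $p$. A wrapped predecessor makes its generator drop by $s-b$. So there are no ``wrapped runs'' to treat in \eqref{eq:common-generator-shift}: such translations are simply not common increases. The correct criterion is that an increase by $b$ is available whenever $b<s-\rho_*$, where $\rho_*=\max_j\rho_{p_j}$, and residue $s-b$ (column $-\varepsilon bd$) is unoccupied.

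Second, your concrete obstruction (``the final run already touches $s-1$'') is the obstruction for a translation by one column, not by $d$ columns. The column pushed onto zero by the $b=1$ shift is $-\varepsilon d$. As you noticed yourself, your version gives predecessor $s-a-1$, which equals $s-d$ only when $a=d-1$. No gap bookkeeping will reconcile it with the statement.

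The missing idea is to use both $b=1$ and $b=2$. Residues $s-1$ and $s-2$ lie at columns $-\varepsilon d$ and $-2\varepsilon d$, which differ by $d$, so allowedness forbids occupying both. Hence $\rho_*\le s-3$ always permits an increase. A blocked configuration therefore has either $\rho_*=s-1$ (a predecessor at column $-\varepsilon d$), or $\rho_*=s-2$ (a predecessor at $-2\varepsilon d$) together with column $-\varepsilon d$ occupied. Reading these off gives the four alternatives:
\begin{itemize}
\item for $\varepsilon=1$, predecessor $s-d$ forces a short final run;
\item for $\varepsilon=1$, predecessor $s-2d$ with $s-d$ occupied forces a full final run;
\item for $\varepsilon=-1$, predecessor $d$ must be the first one, giving $p=d$ and $\gamma>0$;
\item for $\varepsilon=-1$, predecessor $2d$ must be the second one, because $p<2d$ and $p_j\ge j(d+1)$, so the first run ends at $d$.
\end{itemize}

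With $b=1$ alone you cannot reach the full-final alternative. For example, let $\varepsilon=1$ and suppose column $s-d$ is occupied by a final run preceded by $s-2d+x$ with $0<x<d$. The $b=1$ shift is blocked, but column $s-2d$ lies inside the preceding length-$d$ gap, so the $b=2$ shift still improves.

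Finally, drop the separate ``rotate the widths'' step, since an ad hoc rotation is not shown to preserve size. When $\gamma=0$ every gap equals $d$, so each translation itself produces another complete profile with cyclically rotated widths. Iterating these strict improvements within the finite family ends at a blocked profile, which is one of the listed placements.
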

	\begin{proof}
		Put $\rho_*=\max_j\rho_{p_j}$. If $1\le b<s-\rho_*$ and residue $s-b$ is unoccupied, translate every residue by $b$ modulo $s$, equivalently every column index by $\varepsilon bd$. Cyclic separation is preserved and zero remains empty. No predecessor residue wraps, so the generator $at+\rho_{p_j}-s$ of each run of width $a$ increases by exactly $b$. Saturating the translated support therefore gives the common increase in \eqref{eq:common-generator-shift}.
		
		The residues $s-1,s-2$ cannot both be occupied. If $\rho_*\le s-3$, at least one of $b=1,2$ gives such an increase. Hence $\rho_*\ge s-2$. If $\rho_*=s-2$, the shift $b=1$ is blocked only when residue $s-1$, at column $-\varepsilon d$, is occupied.
		
		For $\varepsilon=1$, a predecessor of residue $s-1$ is $s-d$, necessarily preceding a short final run. Otherwise a predecessor of residue $s-2$ is $s-2d$, and column $s-d$ is occupied. If this run were short, $s-d$ would lie in its following gap. It is therefore full and final.
		
		For $\varepsilon=-1$, a predecessor of residue $s-1$ is $d$. It must be the first predecessor, since every later one exceeds $d$; thus $p=d$ and $\gamma>0$. Otherwise a predecessor of residue $s-2$ is $2d$. Since $p<d+\gamma<2d$ and $p_j\ge j(d+1)$, it must be the second predecessor. Its preceding gap has length $d$, so the first run ends at $d$.
		
		For $\gamma=0$, the shifts preserve every gap and the cyclic order. The last assertion follows by repeating these strictly improving shifts within the finite family.
	\end{proof}
	
	\begin{lemma}\label{lem:full-block-transport}
		In a saturated profile of $q+1$ runs and total width exceeding $qd$, two moves strictly increase size: move a full run by fewer than $d$ columns to a larger cardinality; or move a deficient full across $k\le q$ consecutive runs with gaps $d$ to cardinality $K$, rightward for $\varepsilon=1$ and leftward for $\varepsilon=-1$. Both moves must remain allowed inside $\{1,\ldots,s-1\}$, with positive crossed predecessors for the negative sign.
	\end{lemma}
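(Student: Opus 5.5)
The plan is to compute, for each of the two moves, the exact change in $F=Q-\binom L2$ via the contribution formula $F(B)-F(B\setminus A)=H(U-L)$. Then we show the change is positive using the averaging bounds \eqref{eq:size-full-average-bounds}--\eqref{eq:size-weight-bounds} and Lemma \ref{global-short-mean}. Throughout, the profile has $q+1$ runs of total width exceeding $qd$. By Lemma \ref{lem:longruns} (with $w=0$ or $u$) its cardinality is then close to $qK+C$, so $L\le \Lambda(s,t)$ and, by the bound $w>qK-1/2$ from the proof of Theorem \ref{weighted-full-placement}, $w$ exceeds $L$ up to a bounded error. This gives the key comparison: replacing a run with $H$ entries and beta-sum $Q_A$ by one with $H'$ entries and sum $Q_{A'}$ changes the size by
\[
Q_{A'}-Q_A-\Bigl[\tbinom{L-H+H'}{2}-\tbinom L2\Bigr],
\]
since the remaining runs are unchanged by saturation. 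The only exception is when a crossed run's predecessor changes; this case is handled separately.

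For the first move, I will mirror the $\Delta\ge1$ computation in the proof of Theorem \ref{weighted-full-placement}. A full run with predecessor $p$ has height $K-d+1+h$ or $K-d+h$ according to its phase $h$. Moving it by fewer than $d$ columns to a placement with $\Delta\ge1$ more entries raises every old entry by the residue gain $\delta\ge 2q\Delta$, because $d\delta\ge s\Delta-d+1$. The size change is therefore at least $\delta(K-\Delta)+(\text{new entries})-\Delta L-\binom{\Delta}{2}$. The new entries have mean about $\mathcal A\cdot s/d$, while the other entries are bounded by $L<w+1/2$. Using $\mathcal A<2K/3$, $K>3(d-1)$ and $\Delta\le d-1$, this should reduce to an inequality of the form $2(\Delta-1)(K/3-\Delta)\ge 0$ plus a positive term. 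The allowedness hypothesis guarantees that the moved run stays cyclically separated and never occupies column $0$.

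For the second move, the deficient full run has cardinality below $K$. It is transported across $k\le q$ consecutive runs separated by gaps of exactly $d$, rightward for $\varepsilon=1$ and leftward for $\varepsilon=-1$. Each crossed run has its predecessor shifted by $d$ columns, and $dt\equiv\varepsilon\pmod s$ changes its residue by $\pm1$. I expect each crossed run of width $a_i$ and cardinality $H_i$ to lose at most $H_i$ in beta-sum (one per entry, as in the block translates by $-2$ of \eqref{eq:Qk-definition}), possibly losing a bottom entry. The moved run gains at least $K-H_{\rm old}\ge1$ entries and gains in residue, adding a column top of order $t$. The net change is then bounded below, as in the "initial block" estimate of Theorem \ref{weighted-full-placement}, by roughly $t-\sum_i H_i-(\text{loss from }L)$. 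The sum $\sum_i H_i$ is at most $qK+C$, so the positivity reduces to $tT(d)$-type bounds, namely $tT(d)-qK-wd>0$ from that proof. For the negative sign, the hypothesis that crossed predecessors remain positive is exactly what prevents a crossed run's predecessor from becoming column $0$, where the exceptional phase $h=0$ of Lemma \ref{global-short-mean} would change the heights.

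The main obstacle will be the bookkeeping in the second move. I must show that crossing $k$ runs costs each entry at most one, i.e.\ that no crossed run loses a whole column of height because its predecessor residue wraps past $s$. My first approach is to track residues $\rho_{p_j}$ directly, using the explicit form \eqref{eq:actual-packed-predecessors} and the extremal residues $a,s-g$ identified in the proof of Lemma \ref{lem:positive-construction}. If this fails, the fallback is a cruder bound: cap the total loss by $L\le qK+C$ and compare it with the gain of at least one new column top, which is at least $t-s$.
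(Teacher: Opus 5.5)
There is a genuine gap, and it sits in the second move. First, your crossed-run bookkeeping is off by a factor of two. Removing the moved full run together with its adjacent gap of $d$ shifts each crossed predecessor by $2d$ columns, not $d$: it goes from $P+2di-A_{i-1}$ to $P+2d(i-1)-A_{i-1}$, with $A_i$ the partial sums of the crossed deficits. Since $dt\equiv\varepsilon\pmod s$, the crossed residues therefore move by $2$, and every crossed beta-entry drops by exactly $2$. None disappears: for $\varepsilon=1$ the original crossed column indices exceed $2d$, whereas residues $1,2$ sit at indices $d,2d$; for $\varepsilon=-1$ the crossed runs end before $s-2d$. The loss is therefore $2H_\times$, which can be as large as $2kK$, not ``at most one per entry''.

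Second, you misidentify what pays for this loss. The entries newly created in the moved run lie in $\{1,\ldots,\eta\}$ and contribute only $\sum_{x\in E}x\ge T(\delta)$; no new column top of size about $t$ appears. In any case, a gain of order $t<(c+1)e$ cannot cover the staircase cost $\delta L$ once $d\ge8$, since then $(c+1)e\le q\,de/2<qK<L$. The inequality $tT(d)-qK-wd>0$ belongs to a different move (adding the $d$ entries $jt-1$). Your fallback, comparing $t-s$ with $L\le qK+C$, fails outright.

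The missing idea is the phase--residue identity for the moved run. Suppose it goes from cardinality $K-\delta$ to $K$ with predecessor displacement $2kd-b$, where $b=A_k<d$. Then all $K-\delta$ of its old entries rise by $\eta$, where $d\eta=s\delta+2kd-b$. With $s=cd+u$ and $c=2q+1$, integrality gives $\eta\ge c\delta+2k$. Hence
$F'-F\ge\eta(K-\delta)-2kK-\delta L+\delta$.
The term $2k(K-\delta)$ cancels the crossed loss up to $2k\delta$. That remainder, together with $c\delta^2$, is absorbed by $cd\delta$. Finally, $c\delta K$ beats the staircase because $L\le qK+C<(q+1)K$ by Corollary~\ref{cor:ordinary-distinct} (not Lemma~\ref{lem:longruns}, which bounds widths, not cardinality). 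This leaves $F'-F>\delta[q(K-2d)-d+1]\ge0$.

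The first move needs the same sharpening. From $d\eta=s\Delta+\varepsilon\xi$ with $|\xi|<d$ you get $\eta\ge c\Delta$. Your bound $2q\Delta$ holds only under $s\ge2qd$ and leaves no margin when $q=1$ and $L$ is close to $2K$. Your alternative, transplanting the weighted estimate of Theorem~\ref{weighted-full-placement}, would require $L\le w-(\Delta-1)/2$. That does not follow from $w>qK-\frac12$, because $L-qK$ can be as large as $C$, which is unbounded. Note also that the new entries are small, not of mean about $s\mathcal A/d$.
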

	\begin{proof}
		For the second operation with positive sign, let $P$ precede the full run and write the crossed widths as $a_i=d-\delta_i$. Put $A_i=\sum_{j=1}^i\delta_j$, $A_0=0$, and $b=A_k<d$. The predecessors change as follows:
		$$\begin{array}{c|cc}
			&\text{before}&\text{after}\\\hline
			\text{moved full}&P&P+2kd-b\\
			\text{crossed run }i&P+2di-A_{i-1}&P+2d(i-1)-A_{i-1}.
		\end{array}$$
		For the negative sign, the corresponding move is obtained by reversing the permutation. The span and exterior gaps are unchanged, and internal gaps remain $d$, ensuring allowedness; by hypothesis no occupied index crosses zero.
		
		Crossed residues decrease by two. No beta-entry disappears: for the positive sign their indices exceed $2d$, whereas $1,2$ have indices $d,2d$. For the negative sign a full run and a gap of $d$ follow them, so they end before the indices $s-d,s-2d$ of $1,2$. Thus crossed cardinalities are unchanged and every crossed beta-entry decreases by two.
		
		Let the moved full have cardinality $K-\delta$, gain $\Delta$ entries, and raise each old entry by $\eta$. For the first move put $k=0$ and let $\xi$, $|\xi|<d$, be its predecessor displacement. The residue identities are
		$$\begin{aligned}
			d\eta&=s\Delta+\varepsilon\xi,&1\le\Delta\le\delta\le d&\quad\text{(first move)},\\
			d\eta&=s\delta+2kd-b,&\Delta=\delta\le d-1&\quad\text{(second move)}.
		\end{aligned}$$
		Since $s=cd+u$, integrality gives $\eta\ge c\Delta+2k$ in both cases. For crossed cardinality $H_\times\le kK$ and added set $E$,
		$$L'-L=\Delta,\qquad Q'-Q=\eta(K-\delta)-2H_\times+\sum_{x\in E}x.$$
		Since $|E|=\Delta$ and $\sum E\ge T(\Delta)$, subtracting the staircase gives
		$$\begin{aligned}
			F'-F&\ge\eta(K-\delta)-\Delta L-2kK+\Delta\\
			&>\Delta[q(K-2d)-d+1]\ge0,
		\end{aligned}$$
		Here $L\le qK+C<(q+1)K$ by Corollary \ref{cor:ordinary-distinct}, and $c\delta\Delta+2k\delta\le cd\Delta$ in both cases. Finally, $K\ge d(d+3)/2$ bounds the last bracket below by $(d-1)(d-2)/2$.
	\end{proof}
	
	\begin{proposition}\label{prop:residual-placement}
		Among supports with $q$ full runs and one run of width $u$, the largest size occurs at $S_{\rm res}$ for $\varepsilon=-1$, and at $S_{\rm res}$ or $S_B:=S_{\rm res}+d\pmod s$ for $\varepsilon=1$. In either case it is at most $\max(F_q,F_{\rm res})$.
	\end{proposition}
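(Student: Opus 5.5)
We will first reduce to complete profiles and then use the two lemmas just proved to move the runs into a canonical position. Take a maximum-size saturated support with $q$ full runs and one run of width $u$. Its total width is $qd+u$. The $q+1$ runs need $q+1$ separating gaps of at least $d$ columns, and $s=(2q+1)d+u$, so the total gap length is $s-qd-u=(q+1)d$. Hence every gap has exactly $d$ columns except the one containing zero, which has $d+\gamma$ columns. Since $g=\sum(d-a_i)=d-u$ forces $\gamma=0$, the profile is complete, and Lemma \ref{lem:complete-phase-normalization} applies. A maximizer admits no allowed common increase by \eqref{eq:common-generator-shift}. So, up to cyclic reordering of the widths, we may assume the normalized placement: for $\varepsilon=1$ either the short final run is preceded by $s-d$, or a full final run is preceded by $s-2d$; for $\varepsilon=-1$ the first run ends at $d$.

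Next we fix where the short run sits among the full runs. With the phase normalized, the predecessors are given by \eqref{eq:actual-packed-predecessors} with $\gamma=0$, so the support is determined by the position $i$ of the short run in the cyclic order. We use the second move of Lemma \ref{lem:full-block-transport}: a full run lying on the wrong side of the short run is carried across the intervening runs, rightward for $\varepsilon=1$ and leftward for $\varepsilon=-1$. This strictly increases size whenever that full run has cardinality less than $K$ (it is deficient) and the move stays inside $\{1,\ldots,s-1\}$. The first move handles any full run whose cardinality can be raised by a displacement of fewer than $d$ columns.

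We then need to show that in every non-canonical position some full run is deficient. We will check this from the height formula used in the proof of Theorem \ref{weighted-full-placement}: a full run preceded by $p$ has cardinality $K-d+1+h$ with $d\rho_p=sh+p$ when $\varepsilon=1$, and $K-d+h$ when $\varepsilon=-1$. A full run achieves $K$ only in the extremal phase, namely its predecessor residue is $s-2d$-type, i.e. $p\equiv p_h^\varepsilon$ up to the shift by $-\varepsilon d$. After the normalization, all full runs are in this phase exactly when the short run is first or last in the physical order. For $\varepsilon=-1$ this yields $S_{\rm res}$ (short run at $[d-u+1,d]$, fulls at $r_h^-$). For $\varepsilon=1$ it yields $S_{\rm res}$ (short final run after $s-d$) or its cyclic companion $S_B=S_{\rm res}+d$ (full final run after $s-2d$, short run first). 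Iterating the strictly improving moves within the finite family shows the maximum lies among these.

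Finally we prove the bound by the minimal-length choice from Section \ref{sec:size-candidates}. For $S_{\rm res}$ the size is $F_{\rm res}$ by construction. For $S_B$ we compare directly: relative to $S_{\rm res}$, the fulls keep their tops, while the short run's tops change from $jt-b_+$ to a placement whose heights are $\lfloor (jt+\rho)/s\rfloor$ with a smaller preceding residue. We expect $|S_B|$ to differ from $F_{\rm res}$ by a computable amount $\le 0$, or $S_B$ to contain a run of nonpositive contribution whose deletion gives a support with $L\le qK$. In the latter case Theorem \ref{weighted-full-placement} bounds the size by $F_q$. The main obstacle is this last comparison for $\varepsilon=1$, together with verifying that the transport moves in Lemma \ref{lem:full-block-transport} remain inside $\{1,\ldots,s-1\}$ with positive crossed predecessors in every non-canonical configuration. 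We would attack the comparison through the contribution $H(U-L)$ of the short run, bounding $U$ by Lemma \ref{lem:short-mean-elementary}, which gives $Q/H<M_u<w$, and using $w<qK+\tfrac12+\mathcal A/d$ from \eqref{eq:size-weight-bounds}.
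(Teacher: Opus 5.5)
You handle the reduction to a complete profile and the phase normalization the way the paper does. For $\varepsilon=1$ your transport route is a valid alternative to the paper's direct computation. With $p=0$ the deficient fulls are exactly those before the short. Carrying the first of them rightward across the intervening fulls and the short (crossed deficit $d-u$) lands it at a predecessor $\equiv u\pmod d$, of cardinality $K$, so iterating reaches $S_B$.

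\textbf{Gap for $\varepsilon=-1$.} If the first run is the full $[1,d]$, its predecessor is $0$ and its cardinality is $K-d$. Lemma~\ref{lem:full-block-transport} cannot repair it: it cannot move left, and it cannot be crossed, because crossed predecessors must be positive. Carrying the deficient fulls after the short leftward only pushes the short to the end. That placement is normalized (residue $s-1$ lies at the occupied column $d$), non-canonical, and admits none of your moves. So your claim that a first or last short puts every full in phase is false here. For instance, at $(s,t)=(10,13)$ the support $\{1,2,3,7\}$ has size $66<106=F_{\rm res}$, yet no common increase and no transport move applies. What is missing is a direct comparison of every $p=0$ placement with $S_{\rm res}$. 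The paper sums the run beta-sets to get $F_{\rm res}-F=\tfrac{k(d+1)}2[dt-2L_*-1+k(d-1)]$, with $k=q-j+1$ deficient fulls and $L_*=qK+C$. This is positive because $C\le ue$ gives $dt-2(qK+C)>d$; at $(10,13)$ it equals $40$.

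\textbf{Gap in the final bound.} The bound $F_B\le\max(F_q,F_{\rm res})$ is only anticipated, and the tools you propose cannot give it. Three facts are needed:
\begin{itemize}
\item Deleting the short from $S_B$ leaves exactly $S_q$, so $F_B-F_q=H(U-L_B)$ with $L_B=qK+H$.
\item Passing to $S_{\rm res}=S_B-d$ lowers every entry by one and adds the tops $jt-1$, so $F_{\rm res}-F_B=(u+1)[u(t-1)/2-L_B]$.
\item For $u\ge2$, $U<u(t-1)/2$. Then $L_B\ge U$ gives $F_B\le F_q$, and $L_B<U$ gives $F_B<F_{\rm res}$. The case $u=1$ is checked directly.
\end{itemize}
The third inequality needs the exact short-run data in $S_B$: heights $g_j-1$, with errors $jv/u-g_j+1$ permuting $1/u,\ldots,1$. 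The bound $Q/H<M_u$ is too crude. At $(11,15)$, even with the true $H=3$, $M_2+(H+1)/2=50/3>14=u(t-1)/2$. Also, the inequality $w<qK+\tfrac12+\mathcal A/d$ you quote is reversed. The paper proves $w>qK+\mathcal A/d-1$, and at $(11,15)$ one has $w=163/9$ while $qK+\tfrac12+\mathcal A/d<12$.
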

	\begin{proof}
		Every gap touches a full run and therefore has length at least $d$. Since the $q+1$ gaps have total length $s-(qd+u)=(q+1)d$, each has length $d$. By Lemma \ref{lem:complete-phase-normalization}, a short anchor gives $S_{\rm res}$, so it remains to consider $p=0$. Let $j$ full runs precede the short, whose predecessor is $2dj$.
		
		Write $F_B$ for the size at $S_B$. For $\varepsilon=1$ put $(F_*,L_*,k)=(F_B,qK+C-u,j)$; for $\varepsilon=-1$ put $(F_*,L_*,k)=(F_{\rm res},qK+C,q-j+1)$. The deficient fulls have heights $h_i-1$ for $i<d$ and $h_d$; at the negative predecessor zero the last height also drops by one, while the short gains one. Using $Q_k=kQ_1-k(k-1)K$ and summing \eqref{eq:run-beta} over these columns gives
		$$L=L_*-k(d-1),\qquad Q_*-Q=k[tT(d)-2L_*-d+k(d-1)],$$
		where $Q_*$ is the target beta-sum. Subtracting the staircase gives
		$$F_*-F=\frac{k(d+1)}2[dt-2L_*-1+k(d-1)]\ge0.$$
		Indeed, $C\le ue$ gives $dt-2(qK+C)\ge q(d-1)(e-1)+e(d-u)+\varepsilon>d$, so the bracketed quantity is positive. It remains to compare the positive placement $S_B$ with $S_q$ and $S_{\rm res}$. Here $ut=sv-c$; the short has heights $g_j-1$, cardinality $H=C-u=(u+1)(v-1)/2$, and beta-sum $Q_0=\sum_{j=1}^u[jt(g_j-1)-s\binom{g_j}{2}]$. Set $L_B=qK+H$. Passing from $S_B$ to $S_{\rm res}$ adds $u$ entries and changes the beta-sum by $tT(u)-(L_B+u)$, so
		\begin{align*}
			F_B-F_q&=H(U-L_B),&
			F_{\rm res}-F_B&=(u+1)[u(t-1)/2-L_B].
		\end{align*}
		If $u=1$, the first difference divided by $H$ is $[e+1-q((d-1)e+d+1)]/2<0$. For $u\ge2$, monotonicity of $g_j$ gives $\sum j(g_j-1)\ge(u+1)H/2$. The errors $jv/u-g_j+1$ permute $1/u,\ldots,1$; expanding their square sum in $Q_0$ gives
		$$\frac{u(t-1)}2-U\ge\frac{(v+1)[2s(u-1)-3u(u+1)]+6c}{12u}>0,$$
		since $s=cd+u\ge4u+3$ makes the bracket at least $5u(u-1)-6>0$. If $L_B\ge U$, then $F_B\le F_q$; otherwise $L_B<U<u(t-1)/2$ gives $F_B<F_{\rm res}$.
	\end{proof}

	\section{Proof of the maximum-size formula}\label{sec:size-proof}\label{sec:size-small}\label{sec:size-crowded}
	
	For $d=1$, we have $t=es+\varepsilon$ and $K=J=e$, $Z=e^2$. Thus \eqref{eq:size-full} simplifies to
	\begin{equation}\label{eq:size-distance-one}
		F_k=\frac{ek}{2}[(e+1)s+\varepsilon-(e+2)k].
	\end{equation}
	By \cite[Theorems 1.1 and 1.3]{XiongSize}, $M(s,t)=\max_{0\le k\le q}F_k$. For $\varepsilon=-1$, the two candidate families there are $F_k$ for $0\le k<s/2$ and $F_k+e(2k-s)\le F_k$ for $1\le k\le s/2$. They coincide at the only possible missing endpoint $k=s/2$. Rounding the vertex gives $k_*$ in Theorem \ref{thm:size}.
	
	\subsection{Inverse distance two}
	
	\begin{theorem}\label{d2-complete-theorem}
		If $d=2$, then $M(s,t)=\max_{0\le k\le q}F_k$, and every maximizing support is canonical. There are two maximizing partitions exactly when both integers nearest the vertex $W/(2\mathcal D)$ lie in $[0,q]$ and are equally near.
	\end{theorem}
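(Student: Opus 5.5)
We prove the $d=2$ case by reducing every maximizing support to a canonical one and then optimizing the one-variable quadratic $F_k$. For $d=2$ we have $s=4q+a$ with $a\in\{0,1,2,3\}$; coprimality with $2$ forces $s$ odd, so $a\in\{1,3\}$. Also $u=(a-2)_+$ is $0$ when $a=1$ and $1$ when $a=3$. Allowedness means $S\cap(S+2)=\varnothing$ modulo $s$, so runs have width $1$ or $2$, and every run is separated from the next by a gap of at least $2$.

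First we fix a partition of maximum size with the fewest parts. By Lemma \ref{lem:size-saturation} it is saturated. By the contribution argument of Section \ref{sec:size-candidates}, every run satisfies $L<U$. The first task is to eliminate short runs, i.e. single columns. For $d=2$, Lemma \ref{lem:short-mean-elementary} still applies with $\ell=1$: its proof used only $e\ge d+1$ and $s\ge 2qd$, both of which hold here (the hypothesis $d\ge3$ was used only to get $q,a\ge1$). So a single column has mean $Q/H<w$. The plan is to strengthen this to show that a single column can be deleted or merged without losing size. Deletion removes a run whose entries lie below the weight $w$. Alternatively, moving the column to extend an adjacent run into a full run strictly raises the size, by the common-increase inequality \eqref{eq:common-generator-shift}, when there is room. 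I would run through the few local configurations (isolated single column, single column next to a full run with a gap of exactly $2$) and check in each that one of these moves strictly increases size or keeps size while lowering the length. Any such move contradicts the choice of the partition. This is the step I expect to be the main obstacle. When $a=3$, a full-plus-residual configuration like $S_{\rm res}$ exists, and one must show $F_{\rm res}<\max_k F_k$ instead of appealing to Proposition \ref{prop:residual-placement}. Here I would compute directly with $u=1$: $v=(e+\varepsilon)/2$, $C=v$, and $g_1=v$. This makes the comparison $F_{\rm res}-F_q$ an explicit quadratic in $q,e$, whose negativity follows as in the $u=1$ case of Proposition \ref{prop:residual-placement}.

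Once only full runs remain, say $k$ of them, Theorem \ref{weighted-full-placement} gives $Q-wL\le Kk(2q-k)$, with equality only at $S_k$; its proof did not use $d\ge3$ beyond the bounds \eqref{eq:size-full-average-bounds}, which I would recheck for $d=2$ with $K=(3e+1)/2$. I would rather argue directly. Among supports with $k$ full runs, the normalization by common shifts in Lemma \ref{lem:complete-phase-normalization} and the transport moves of Lemma \ref{lem:full-block-transport} strictly increase size unless the support is $S_k$. This gives $F\le F_k$, with equality only at $S_k$. Hence $M(s,t)=\max_{0\le k\le q}F_k$, and every maximizer is canonical.

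Finally, $F_k=\frac k2(W-k\mathcal D)$ is a concave quadratic in $k$ with vertex $W/(2\mathcal D)$. Over the integers in $[0,q]$ it is maximized at the integer(s) nearest the vertex, clipped to $[0,q]$. Since the $S_k$ are distinct partitions and the maximizers are exactly canonical, two maximizers occur precisely when two integers $k,k+1\in[0,q]$ are equidistant from the vertex, i.e. $W/(2\mathcal D)=k+\tfrac12$. This matches $k_*=\min\{q,\lfloor (W+\mathcal D)/(2\mathcal D)\rfloor\}$ in Theorem \ref{thm:size}.
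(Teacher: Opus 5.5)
Your concavity endgame matches the paper, but the plan has a genuine gap at the step you flag yourself, the removal of singletons, and neither proposed move can fill it.

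Deletion is never an improving move for the partition you fixed. You chose it with fewest parts precisely so that every run has $L<U$, so deleting any run strictly lowers the size. The bound $Q/H<w$ is beside the point here; in any case the last step of the proof of Lemma \ref{lem:short-mean-elementary}, $2d[q(d-2)-1]+1>0$, fails at $d=2$. Merging is not controlled by \eqref{eq:common-generator-shift} either. That inequality concerns adding the same integer to \emph{every} generator, i.e.\ a global translation of residues, not moving one column into a neighbouring run.

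There are also configurations where no local move exists at all. Take $s=13$, $t\in\{19,20\}$, so $d=2$, $e=3$, $q=3$. The support $\{1,2\}\cup\{5\}\cup\{8,9\}$ is allowed and maximal: the singleton cannot be extended, its neighbours are already full, and no empty column can be added anywhere. Improving such a support requires trading singletons for doubles globally.

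The missing idea is the paper's global comparison. With $e=2\nu+1$ and $H=K=3\nu+2$, take $k$ doubles and $b$ singletons of total height $z$. Rerun the full-run comparisons with the adapted weight $\omega=kH+z-\tfrac12$, and bound the singleton mean by $[s(\nu+1)-1]/2$. This gives $F\le F_k+\tfrac z2[s(\nu+1)-2kH-z]$, strictly unless the doubles are canonical. Extending $F_j=j(W-\mathcal Dj)/2$ to real $j$, one gets $2(F_{k+b/2}-F)>\mathcal D/4$ whenever $b\ge1$, $k<q$ and $F\ge F_k$. The packing bound $4k+3b\le s$ gives $k+\lceil b/2\rceil\le q$, so an integer adjacent to $k+b/2$ beats $F$. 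Only $k=q$, $b=1$ (forcing $s=4q+3$) is left to Proposition \ref{prop:residual-placement}. That proposition is stated for $d\ge2$ and does apply, but there you also need $F_B<F_q$ for $\varepsilon=1$, not just $F_{\rm res}<F_q$.

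The full-run step also needs repair. Theorem \ref{weighted-full-placement} is calibrated to $F_q$ and does not transfer to $d=2$. The bound $\mathcal A>(K+2)/2$ in \eqref{eq:size-full-average-bounds} fails for $e=3,5$. Its conclusion fails outright at $(s,t)=(41,62)$: there $q=10$, yet $F_9=1548>F_{10}=1545$ while $L(S_9)=45\le qK$. In that example $w=45.4<kK$ for $k=q$. Writing $L=kK-\delta$, the inequality $Q-wL\le Kk(2q-k)$ only gives $F-F_k\le\delta(kK-w)-\binom{\delta+1}{2}$, which does not exclude deficient doubles. You need a weight tied to the actual configuration, as with $\omega$ above, and you must recheck the height-increasing and exceptional negative moves with it.

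The fallback to Lemmas \ref{lem:complete-phase-normalization} and \ref{lem:full-block-transport} does not fit, since they assume $q+1$ runs of total width above $qd$ and $0<u<d$. Finally, an argument about a minimum-cardinality maximizer cannot show that \emph{every} maximizer is canonical, nor count the maximizers. In the tie case, $B_{k+1}$ is $B_k$ plus a zero-contribution double, a maximizer that is not of minimal cardinality. The paper avoids this by bounding every saturated support, with strict inequality unless it is canonical.
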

	\begin{proof}
		Write $e=2\nu+1$, $\nu\ge1$. Then $s\ge5$ is odd, $2t=(2\nu+1)s+\varepsilon$, and $H=K=3\nu+2$ gives $$W=s(\nu+1)(5\nu+3)+\varepsilon(2\nu+1),\qquad \mathcal D=H(H+2).$$
		We call runs of widths one and two singletons and doubles, respectively. Every gap has at least two zeros, so $k$ doubles and $b$ singletons satisfy $4k+3b\le s$. Doubles have height at most $H$ and singletons have height $\nu$ or $\nu+1$, so $L\le sH/4$.
		
		For total singleton height $z$, we claim
		\begin{equation}\label{d2-global-bound}
			F\le F_k+\frac{z[s(\nu+1)-2kH-z]}2,
		\end{equation} with strict inequality unless the doubles are canonical. The full-run comparisons in Theorem \ref{weighted-full-placement} apply with weight $\omega=kH+z-1/2\le sH/4-1/2$. An ordinary height-increasing move adds one entry and raises old entries by at least $(s-1)/2$. Its weighted gain, and that of the exceptional negative move, are respectively at least
		$$\frac{(s-1)(H-1)}2+1-\omega\ge\frac{3\nu(s-2)}4+1>0,
		\qquad 3t-qH-2\omega\ge\frac{3\nu(2q+1)}2>0.$$
		For the doubles' cardinality $L_f$ and beta-sum $Q_f$, this gives $Q_f-\omega L_f\le Q_k-\omega kH$, with equality only at their canonical placement. Put $L_f=kH-\delta$. The singleton mean is at most $[s(\nu+1)-1]/2$. Subtracting $\binom{kH+z-\delta}{2}$ gives \eqref{d2-global-bound} minus $\delta^2/2$.Subtracting $\binom{kH+z-\delta}{2}$ gives an upper bound equal to the right-hand side of \eqref{d2-global-bound} minus $\delta^2/2$.
		Suppose $b\ge1$, $k<q$, and $F\ge F_k$. Then $s(\nu+1)-2kH>0$ by \eqref{d2-global-bound}. Extend $F_j=j(W-\mathcal D j)/2$ to real $j$, and put $h=s-4k-3b\ge0$ and $\theta=(\nu+1)(3\nu+1)$. Using $b\nu\le z\le b(\nu+1)$ and $\varepsilon\ge-1$, expansion gives
		$$2(F_{k+b/2}-F)\ge\frac b2[(b+h)\theta-e]\ge\frac{3\theta-e}2>\frac{\mathcal D}4.$$
		The first inequality drops the nonnegative terms $bk(3\nu^2-2)+b^2(7\nu^2-2\nu-4)/4$. The second uses $b(b+h)\ge b+2$, since $h\ge2$ if $b=1$, and $\theta>e$; the last reduces to $9\nu^2+2\nu-4>0$. Packing gives $k+\lceil b/2\rceil\le q$. The larger neighboring integer value is at least $F_{k+b/2}-\mathcal D/8$, hence exceeds $F$.
		
		For $k=q$ and $b>0$, packing forces $s=4q+3$, $b=1$. By Proposition \ref{prop:residual-placement}, it is enough to consider the residual placements. Substituting $u=1$ in their size formulas and using $q\ge1$ gives
		$$F_{\rm res}-F_q\le-2\nu-1<0,\qquad F_B-F_q\le-\nu<0,$$
		with $F_B$ present only for the positive sign. Thus only canonical doubles maximize size, and strict concavity of $F_k$ proves the remaining assertions.
		
	\end{proof}

	\subsection{Reducing the support}

	Henceforth $d\ge3$, $s=(2q+1)d+u$, $0<u<d$, and $L>qK$. Choose a size maximizer of minimum cardinality, so every run has $L<U$ by Section \ref{sec:size}. Here $s\ge10$ and $d<s/3$. Also $t\ge s+3$, since $t=s+1,s+2$ would give $d=1$ or $s\le2d+1$. Thus the next lemma applies to short runs of width at least two.

	\begin{lemma}\label{lem:local-extensions}
		Let $s,t$ be coprime, $s\ge10$, $t\ge s+3$, and $3\le n\le s/3$ an integer. A saturated run of $n-1$ columns with positive contribution can be strictly improved by extending it at either end, provided the new column is nonzero and empty and the resulting support is allowed.
	\end{lemma}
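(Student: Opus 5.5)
The plan is to compare sizes directly through \eqref{eq:beta-statistics}, using the positive-contribution inequality $L<U$ from Section~\ref{sec:size} to absorb the staircase term. Let $A$ be the beta-set of the run, with $H=|A|$ and $Q_A=\sum A$, and let $E$ be the set of beta-numbers added by the extension, with $\Delta=|E|\ge1$ because the new column is nonzero. Then
$$F(B\cup E)-F(B)=\sum_{x\in E}x-\Delta L-\binom{\Delta}{2}.$$
Since $L<U=Q_A/H+(H+1)/2$, it suffices to prove
$$\frac1\Delta\sum_{x\in E}x\;\ge\;\frac{Q_A}{H}+\frac{H+\Delta}{2}.$$
This says that the mean of the new entries exceeds the mean of the run by at least half the enlarged cardinality. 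The rest of the run lies outside the argument, because only $L$ enters and it is controlled by $L<U$.

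First, I would identify $E$ in each direction using \eqref{eq:run-beta}. Suppose the run is preceded by $p$, with $\rho=\rho_p$, and has columns $j=1,\dots,n-1$ with heights $\lfloor (jt+\rho)/s\rfloor$. Extending on the right adds the column $j=n$, so
$$E=\{nt+\rho-ks:1\le k\le\lfloor (nt+\rho)/s\rfloor\}.$$
Extending on the left, the new first column is preceded by $\rho'$ with $t+\rho'=\rho+c's$, where $c'\ge1$ is the new column's height. Each old column $j$ then becomes $\{jt+\rho+c's-ks\}$. Thus every old entry survives, each old column gains $c'$ entries on top that exceed all its old ones, and the new column contributes $\{t+\rho'-ks\}$. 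In both cases $E$ is explicit. I expect the left extension to be the easier case, since most of its new entries sit above the old column tops.

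Next, I would estimate both means. Write $h_j=(jt+\rho)/s-\phi_j$ with $0\le\phi_j<1$. A column with top $T_j=jt+\rho-s$ has mean $T_j-s(h_j-1)/2$, which is approximately $(jt+\rho)/2$. To leading order, $Q_A/H\approx\frac t2\cdot\frac{\sum j^2}{\sum j}=\frac t2\cdot\frac{2n-1}{3}$, while the right-extension mean is approximately $nt/2$. The difference is therefore about $t(n+1)/6$. On the other side, $(H+\Delta)/2\approx tn(n+1)/(4s)$, so the required inequality reduces to $n\le 2s/3$. The hypothesis $n\le s/3$ leaves a factor of two of slack.

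The main obstacle is making this rigorous against the floors, the additive $\rho\in[0,s)$ term, and small $n$. For this I would proceed as follows:
\begin{itemize}
\item Use the exact identities $\sum_{k=1}^{h}(T-ks+s)=hT-s\binom h2$ and keep the $\phi_j$ as free parameters in $[0,1)$.
\item Bound $Q_A/H$ above by the worst case for the $\phi_j$, taking $\rho$ at its extreme.
\item Bound $H\le\bigl(tT(n-1)+(n-1)\rho\bigr)/s$ and $\Delta\le(nt+\rho)/s$.
\end{itemize}
The resulting inequality is polynomial in $n,t,s,\rho$. I would clear denominators and show positivity using $3n\le s$, $t\ge s+3$ and $s\ge10$. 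The terms independent of $t$, of size $O(s)$, are dominated by the slack term of size about $t(n+1)/12\ge(s+3)/3$.

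If the crude bound fails at $n=3$, I would check the boundary cases $n=3,4$ directly; the hypothesis $s\ge10$ is presumably there to settle them. For the left extension, the same computation applies: each old column adds $c'$ entries with mean at least $T_j+s-s(c'-1)/2$, and the new column has mean about $(t+\rho')/2$. The weighted mean is then at least the right-extension bound minus a controlled correction, so the same slack closes it. Allowedness and emptiness of the new column are assumed in the statement, so saturation via \eqref{eq:run-heights} makes the extended configuration a valid core.
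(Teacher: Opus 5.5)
Your reduction is the right one and matches the paper. Since $L<U$, the gain $\sum_{x\in E}x-\Delta L-\binom{\Delta}{2}$ is positive as soon as $\bar E-(\Delta-1)/2\ge U$, and your description of the added sets $E$ for both extensions is correct.

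\textbf{The gap.} The gap lies in the only substantive step, the comparison of $U$ with the extension threshold. You propose to treat the errors $\phi_j$ as free parameters in $[0,1)$, to bound $\Delta\le(nt+\rho)/s$, and then to clear denominators. The resulting inequality is false. Write $w_j=(jt+\rho)/s$. Then $Q_A=\frac s2\sum_j\bigl(w_j^2-w_j+\phi_j(1-\phi_j)\bigr)$ and $H=\sum_j(w_j-\phi_j)$. After the bound on $\Delta$, the right extension requires
$$(nt+\rho)(s-1)/(2s)-s/2\ \ge\ Q_A/H+H/2.$$
\begin{itemize}
\item At the admissible point $(s,t,n,\rho)=(10,13,3,9)$ the left side is $16.6$. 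Letting $\phi_1,\phi_2\to1$ gives $Q_A/H+H/2\to 56.95/3.7+1.85\approx17.24$, so the inequality fails.
\item You also propose to maximize $Q_A/H$ over the $\phi_j$ while bounding $H$ separately by its $\phi=0$ value. With that decoupling the chain fails even at $(15,19,5,14)$: the left side is about $43.37$, while the right side tends to $35.60+8.2$.
\end{itemize}
So your fallback of checking $n=3,4$ ``directly'' does not cover the failures, and no such direct check is described anyway. Your heuristic that the $O(s)$ terms are dominated by a slack of about $t(n+1)/12$ is also wrong. For small $n$ the fractional parts move $U$ by an amount of the same order as $G$ itself. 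For example, at $n=3$, $z=0$, $t\approx s$, the relaxed $U$ ranges from about $s/3$ to about $s$, while $G\approx s$.

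\textbf{What is missing: coprimality.} Your proposal never uses coprimality, and it is what rescues the inequality. The paper sets $\mathcal R=2H(G-U)$ with $G=(s-1)(n\tau+z-1)/2$ and shows $\partial\mathcal R/\partial r_i\le -n(s-n)+2s\le0$. It then uses that the errors $r_i=\{(it+\rho)/s\}$ are distinct multiples of $1/s$, so in decreasing order $r_i\le 1-i/s$. Even with this constraint the margin is tiny. The resulting $P(\tau,z)$ is increasing in $\tau$ and concave in $z$, and is checked at $\tau=1+3/s$, $z\in\{0,1-1/s\}$, where the worst case only gives $s^2P_1\ge3$.

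\textbf{Left extension.} The paper also avoids your unspecified ``controlled correction'' here. It shows directly that both thresholds $G_R$ and $G_L$ are at least $G$, using $h_n\le n\tau+z$ and $c\ge\tau-z$. Note also that in a left extension the new entries of old column $j$ have mean $T_j+s+(c'-1)s/2$. Your lower bound $T_j+s-s(c'-1)/2$ discards $(c'-1)s$ per entry, and for small $n$ that loss is as large as the whole margin once $c'\ge2$, so it would itself break the left case.
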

	\begin{proof}
		Let $\rho$ be the predecessor residue, $h_i=\lfloor(it+\rho)/s\rfloor$, $\tau=t/s$, $z=\rho/s$, and $c=\lceil(t-\rho)/s\rceil$. For an extension on the right, we append column $n$; for one on the left, we add $jt+\rho+ks$ for $0\le j<n$, $0\le k<c$. The respective gains are $h_n(G_R-L)$ and $cn(G_L-L)$, where
		$$G_R=nt+\rho-\frac{(s+1)h_n+s-1}{2},\qquad G_L=\frac{(n-1)t}{2}+\rho-\frac{s-1}{2}+\frac{(s-n)c}{2}.$$
		Both thresholds are at least $G=(s-1)(n\tau+z-1)/2$, since $h_n\le n\tau+z$ and $c\ge\tau-z$. The left additions preserve subtraction closure because $\rho+(c-1)s<t$; full saturation is unnecessary. As $L<U$, it remains to prove $U<G$.
		
		Write $w_i=i\tau+z$, $r_i=w_i-h_i$, and $\mathcal R=2H(G-U)$. The identity $$\mathcal R=H(s-1)(n\tau+z)-H^2-s\sum_{i=1}^{n-1}(w_i^2-r_i^2)$$ shows that it is symmetric in the errors $r_i$. On $[0,1]^{n-1}$,
		$$\frac{\partial\mathcal R}{\partial r_i}\le-n(s-n)+2s\le0.$$
		Coprimality makes the errors distinct multiples of $1/s$. In decreasing order they satisfy $r_i\le1-i/s$. Thus $\mathcal R\ge(n-1)P(\tau,z)$, where $P$ is obtained by setting $r_i=1-i/s$ in $\mathcal R/(n-1)$.
		
		This quadratic is concave in $z$, and for $\tau\ge1$,
		$$\frac{\partial P}{\partial\tau}\ge\frac{n^2(n-3)}2+\frac{n^2(s-n)}{2s}>0.$$
		It remains to check $P_0=P(1+3/s,0)$ and $P_1=P(1+3/s,1-1/s)$. Put $x=n-3$, $y=s-3n$. These are nonnegative integers with $x+y\ge1$. The polynomials $s^2P_0-72$ and $s^2P_1-75(x+y)+72$ have nonnegative coefficients in $x,y$, so
		$$s^2P_0\ge72,\qquad s^2P_1\ge75(x+y)-72\ge3.$$ Thus $\mathcal R>0$, proving $U<G$.
	\end{proof}
	
	\begin{lemma}\label{lem:actual-crowded-structure}
		A size maximizer of minimum cardinality with $L>qK$ has exactly $q+1$ runs, all cyclic gaps have length at least $d$, and its total occupied width lies between $qd+1$ and $qd+u$.
	\end{lemma}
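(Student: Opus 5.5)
The plan is to prove the three assertions in the order: width lower bound, then gaps of length at least $d$, then the run count and width upper bound. Fix a size maximizer of minimum cardinality with $L>qK$. By Lemma~\ref{lem:size-saturation} its support is saturated, and by Section~\ref{sec:size} every run has positive contribution, $L<U$. Allowedness means $S\cap(S+d)=\varnothing$, so every run has width at most $d$.

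\emph{Width lower bound.} Write the runs as full runs (cardinality at most $K$) and short runs of widths $\ell_i<d$. For a short run, Lemma~\ref{lem:short-mean-elementary} gives cardinality at most $\frac{e-1}{d}T(\ell_i)+\ell_i$. By convexity of $T$, a collection of short runs of total width at most $d$ has total cardinality at most $\frac{e-1}{d}T(d)+d\le K$, using $K=\frac{de+d+e-1}{2}$. Grouping widths greedily, a support of total width at most $qd$ therefore has cardinality at most $qK$. Hence $L>qK$ forces total width at least $qd+1$, and since every width is at most $d$, there are at least $q+1$ runs.

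\emph{Gaps of length at least $d$ (main step).} Suppose some cyclic gap has fewer than $d$ zeros. Such a gap must lie between two runs neither of which is full, because a full run followed by a gap shorter than $d$ would place two occupied columns at distance $d$. I would first try to extend an adjacent short run into this gap by one column using Lemma~\ref{lem:local-extensions}. Its hypotheses hold: $s\ge10$, $t\ge s+3$, and the new run has width $n\le d<s/3$; widths $n\ge3$ are covered by the lemma, while a singleton extended to width two needs a direct check. The lemma yields a strict size increase, contradicting maximality.

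The difficulty is allowedness: the new column $c$ must have $c\pm d$ empty. I expect this to be the main obstacle.
\begin{itemize}
\item Because the gap has length less than $d$, one of $c\pm d$ lies inside the neighbouring run or in the gap itself, and the latter is fine. So the real conflict is only with the column $d$ steps beyond the far end of the gap.
\item I would try both ends of the gap and both neighbouring runs. If all four extensions are blocked, the blocking columns force a rigid configuration of runs at spacing $d$.
\item That rigid configuration I would handle with a common translation of the generators, as in the proof of Lemma~\ref{lem:complete-phase-normalization} via \eqref{eq:common-generator-shift}, or with a block move of Lemma~\ref{lem:full-block-transport}. Either strictly increases size, again a contradiction.
\end{itemize}

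\emph{Run count and width upper bound.} Once all gaps have length at least $d$, let $N\ge q+1$ be the number of runs and $W_S$ the total width. Then
\[
s\ge W_S+Nd\ge (qd+1)+Nd .
\]
Since $s=(2q+1)d+u<(2q+2)d$, this gives $N\le q+1$, so $N=q+1$. Substituting back, $W_S\le s-(q+1)d=qd+u$, which completes the lemma.
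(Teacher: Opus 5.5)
\paragraph{Assessment.} Your final step matches the paper's closing argument: the count $s\ge W_S+Nd$ with $s<(2q+2)d$. Your width lower bound is also right in substance, but the greedy grouping fails as written. Runs of width just above $d/2$ cannot be grouped two at a time within width $d$, so you may need more than $q$ groups. Use the per-run bound $H\le K\ell/d$ instead. It follows from Lemma \ref{lem:short-mean-elementary}, since $(e-1)(\ell+1)/(2d)+1\le K/d$ for $\ell<d$.

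\paragraph{The gap: excluding short gaps.} Suppose a gap of $b<d$ zeros separates runs of widths $x$ and $y$. The column differences across the gap force $x+y+b\le d$. Hence both $c\pm d$ lie outside the two runs and the gap, contrary to your first bullet. A blocked extension then produces a third run of width $z\le b$ inside the $d$-translate of the gap, with $x+y+z\le d$.

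Nothing local rules this configuration out; the paper excludes it quantitatively. From $qK<L<U<\widehat{U}(\ell)=M_\ell+(\beta T(\ell)+\ell+1)/2$ it first rules out singletons. This needs separate comparisons with $F_1$ and $F_{\rm res}$ when $q=1$ and $d\in\{3,4\}$. It then shows every short run has $\ell>d/3$ when $q\ge2$, which makes $x+y+z\le d$ impossible.

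For $q=1$ only $\ell>d/4$ is available, so a further global argument is needed. If all widths were at most $d/2$, then $F<F_1$. So some run of width $a>d/2$ is flanked by gaps of length at least $d$, and three runs, or four or more, are then excluded by width-and-gap counting.

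You also miss the gap $\{0\}$, where no extension is possible at all. The paper checks that $s-1,t-1\notin B$ and replaces $B$ by $B+1$, which raises the size by $L$.

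\paragraph{Why your fallbacks do not close it.} Lemmas \ref{lem:complete-phase-normalization} and \ref{lem:full-block-transport} are stated for profiles with exactly $q+1$ runs. The former also assumes every internal gap equals $d$. That is what you are trying to prove, so invoking them here is circular. Moreover, their moves act on full runs, whereas both runs bordering a short gap are short.

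The common shift \eqref{eq:common-generator-shift} requires an unoccupied residue $s-b$ with $b<s-\rho_*$. When it fails, the proof of Lemma \ref{lem:complete-phase-normalization} only concludes $\rho_*\ge s-2$, which is a normalization, not a contradiction.

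Finally, ``a singleton extended to width two needs a direct check'' leaves a real hole. Lemma \ref{lem:local-extensions} requires $n\ge3$. The paper does not extend singletons; it proves they cannot occur, and the blocking argument also needs that fact to guarantee widths at least two.
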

	\begin{proof}
		Write $n=\sum\ell_i$, $\ell_{\min}=\min\ell_i$, and $\beta=(e-1)/d\ge1$. Since every run satisfies $H\le K\ell/d$, we have $n>qd$. For a short run, Lemma \ref{lem:short-mean-elementary} gives $L<U<\widehat U(\ell):=M_\ell+(\beta T(\ell)+\ell+1)/2$. A singleton satisfies $$U-qK\le2d+\tfrac12-qd(d-1)/2\le\tfrac12<L-qK,$$ unless $q=1$ and $d\in\{3,4\}$. In these cases write $e=dm+j$, $t=ms+r$, with $m\ge1$, $1\le j<d$, and $r=(sj+\varepsilon)/d$. Its one-column heights give
		$$U\le\frac{m(s+1)}2+\max\left\{r,\frac{s-1}{2}\right\}.$$
		Substitution, using $j\in\{1,d-1\}$, bounds this by $K+2$ for $d=3$ and $K+3-2m\le K+1$ for $d=4$. It remains to consider $d=3$, $L=K+1$. If the singleton has cardinality $H$ and there are $k\le1$ full runs, retaining its negative contribution in Theorem \ref{weighted-full-placement} gives
		$$F-F_1<w-(2-k)K-(w-M_1)H\le M_1-(2-k)K.$$
		For $k=0$ the last bound is negative. For $k=1,u=1$, it equals $(7-e)/3\le1$, so integrality gives $F\le F_1$, contradicting minimum cardinality. For $u=2$, it is $(16-e)/6\le2$, whereas \eqref{eq:size-residual} gives $F_{\rm res}-F_1=m(5m+2)-3(1-\varepsilon)/2\ge4$. Hence every run has width at least two.
		For the other short runs, we minimize $qK+1-\widehat U(\ell)$ at $u=d-1$ and then at the smallest $q,e$; the coefficients give the following bounds: $$
		\begin{array}{c|c|c}
			q&\ell&\text{lower bound for }qK+1-\widehat U(\ell)\\\hline
			q\ge2&d/3&[11(d-3)^2+25(d-3)+18]/36\\
			q=1&d/4&[29(d-8)^2+220(d-8)+128]/192.
		\end{array}$$ As $\widehat U$ increases, these bounds force $\ell>d/3$ for $q\ge2$ and $\ell>d/4$ for $q=1$; the second row is needed only for $d\ge8$, since $\ell\ge2$.
		
		For the local blocking argument, suppose a gap of $b<d$ zeros separates $[A,A+x-1]$ and $[A+x+b,A+x+b+y-1]$, with indices read cyclically. Their column differences fill $[b+1,x+y+b-1]$. Since $d$ is forbidden and $b+1\le d$,
		$$x+y+b\le d,$$
		so both runs are short. Both widths are at least two, so Lemma \ref{lem:local-extensions} applies to an extension into this gap.
		
		A nonzero new column $A+x$ must be blocked at distance $d$. A blocker at $A+x+d$ lies between the forbidden translates $[A+d,A+x+d-1]$ and $[A+x+b+d,A+x+b+y+d-1]$. Its run lies in the intervening $b$ positions; translation by $-d$ treats the other blocker. Since $x+y+b\le d$ and $s>3d$, this is a third run, of width
		$$z\le b,\qquad x+y+z\le d.$$
		The other endpoint similarly supplies a third run unless the gap is $\{0\}$.
		
		For the gap $\{0\}$, the neighboring indices $-1,1$ are occupied. If $d$ were occupied, its two neighbors would be forbidden by $-1,1$, making it a singleton; the same holds for $-d$. Thus $\pm d$ are empty. The index $1-\varepsilon d$ is empty because it differs from $1$ by $d$. Under multiplication by $t$, these empty indices show that the residues of $s-1$ and $t-1$ are unoccupied. Consequently $s-1,t-1\notin B$, and $B+1$ is still closed under subtraction of $s,t$: a new failure could only occur at $s$ or $t$. Translation preserves distinctness and raises size by $L>0$, a contradiction.
		
		Thus every gap is at least $\ell_{\min}$, and a width $a>d-2\ell_{\min}$ cannot touch a short gap, since its third run would give $a+2\ell_{\min}\le d$. If $q\ge2$, all widths exceed $d/3$, so that inequality forbids short gaps altogether. Writing $r$ for the number of runs, $rd\ge n>qd$ gives $r\ge q+1$, while $s\ge n+rd$ and $s<(2q+2)d$ exclude $r\ge q+2$. Hence $r=q+1$.
		
		Let $q=1$ and put $c=s/d\in(3,4)$. If every width were at most $d/2$, then $d\ge4$ and $Q/L\le M_{d/2}$ and, using $K\ge d(d+3)/2$, $$M_{d/2}+\tfrac12=\frac{c(2K+d)}6+\tfrac12\le\frac{2cK}{5},\qquad F\le\frac{2c^2K^2}{25}<\frac{(c-1)K^2}{2}<F_1.$$ Here the first inequality reduces to $cd(d-2)\ge15$, the middle one to $25(c-1)-4c^2\ge11$, and the last follows from $\mu>c(K+2)/2-2$. Hence some width $a>d/2$ has both adjacent gaps at least $d$, since $\ell_{\min}>d/4$. Three runs are impossible: a short gap gives $n\le d$, and otherwise $n\le u<d$. If there are at least four runs, take three successive runs other than the run of width $a$, of widths $x,y,z$ and intervening gaps $b_1,b_2$. Their span is at least $5\ell_{\min}>d$. The differences between columns in the first and third run fill the interval $$[b_1+y+b_2+1,\ x+b_1+y+b_2+z-1].$$ Since $d$ is forbidden, $b_1+y+b_2\ge d$. Consequently $$s\ge a+2d+x+y+z+b_1+b_2\ge a+x+z+3d>4d>s,$$ a contradiction. Thus there are two runs, and a short gap would require a third. Finally, $n>qd$ and the $q+1$ gaps give $n\le qd+u$.
	\end{proof}

	\begin{proposition}\label{prop:actual-completion}
		A minimum-cardinality maximizer with $L>qK$ has every gap exactly $d$.
	\end{proposition}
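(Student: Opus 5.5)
Lemma \ref{lem:actual-crowded-structure} already gives us $q+1$ runs, all cyclic gaps at least $d$, and total width $n$ with $qd+1\le n\le qd+u$. Since $s=(2q+1)d+u$, the gaps have total length $s-n=(q+1)d+(u-(n-qd))$. So the total excess over $d$ is
$$\sum_{\text{gaps}}(G-d)=u-(n-qd)\in[0,u-1].$$
The plan is to show that any positive excess can be removed while strictly increasing size or keeping size and reducing cardinality. That contradicts the choice of a minimum-cardinality maximizer.

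\emph{Step 1: reduce to one gap carrying the excess.} Suppose two or more gaps exceed $d$. Shift a run, or a block of consecutive runs, by one column to transfer excess from one gap to another. Cyclic translation of residues preserves allowedness. I would use the common-generator shift \eqref{eq:common-generator-shift}, as in Lemma \ref{lem:complete-phase-normalization}: if some residue $s-b$ with $b<s-\rho_*$ is free, the shift strictly increases size. So in a maximizer the configuration is rigid, and the argument of Lemma \ref{lem:complete-phase-normalization} forces all excess into the gap at zero. The profile is then of the form \eqref{eq:actual-packed-predecessors} with gap $d+\gamma$ at zero, $0<\gamma<u$, and all internal gaps exactly $d$. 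The anchoring is $p$ as prescribed there: short final run preceded by $s-d$ or full final run by $s-2d$ for $\varepsilon=1$, and the first run ending at $d$ or $p=d$ for $\varepsilon=-1$.

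\emph{Step 2: extend a short run into the zero gap.} With $\gamma>0$, the gap at zero has more than $d$ zeros. Take a short run $\ell<d$ adjacent to this gap and add one column at the end facing zero.
\begin{itemize}
\item The new column is nonzero, since the gap contains more than one column beyond zero.
\item It is not at distance $d$ from another occupied column: internal gaps are exactly $d$, the run is short, and the exterior gap is $d+\gamma$.
\item It is empty, so the support stays allowed.
\end{itemize}
Lemma \ref{lem:local-extensions} applies: $\ell\ge2$ was shown in Lemma \ref{lem:actual-crowded-structure}, and $n=\ell+1\le d<s/3$. It yields a strict improvement, a contradiction. A short run exists because $n\le qd+u<(q+1)d$.

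If the runs adjacent to the zero gap are both full, the short run is internal. Then I would use the block transport of Lemma \ref{lem:full-block-transport}: move the adjacent full run across the intervening runs, which have gaps $d$, so that a short run becomes adjacent to zero. That move does not decrease size, and is strict when the full run is deficient. If the full run has cardinality exactly $K$, instead translate the whole configuration within the slack $\gamma$ using \eqref{eq:common-generator-shift}, which is strict.

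\emph{Main obstacle.} The hardest part will be the $\varepsilon=-1$ case where $p=d$ and $\gamma>0$. There the extension toward zero might hit index $d$'s forbidden partner, or the exceptional predecessor zero, where heights drop by one. I would first try to check directly that the column $p$ (index $d$) can be added to the first run. Its residue is $s-1$, which is unoccupied. If that fails, compare with $S_{\rm res}$ through the explicit difference computed in Proposition \ref{prop:residual-placement}: $F_*-F=\tfrac{k(d+1)}2[\cdots]>0$.
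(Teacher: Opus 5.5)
\textbf{There is a genuine gap, starting in Step~1.} The common increase \eqref{eq:common-generator-shift} used in Lemma~\ref{lem:complete-phase-normalization} translates \emph{every} residue by $b$. Equivalently, it rotates the whole support by $\varepsilon bd$ columns. It therefore preserves all cyclic gaps and their cyclic order, and only changes which gap contains column $0$. Rigidity under such shifts says nothing about how the excess is distributed, and it cannot move excess from one gap into another. Shifting only one run or a block of runs is not a common increase. Moreover, a moved run's cardinality depends on the phase of its new predecessor, so it may drop. The missing idea is a local argument. An internal gap longer than $d$ either adjoins a short run, which extends into it by Lemma~\ref{lem:local-extensions}, or it separates two full runs with predecessors $p$ and $p+2d+x$, where $0<x<d$. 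Since $ex\not\equiv0\pmod d$, these two full runs have different cardinalities. Moving the smaller one by $x$ toward the other raises its cardinality, which contradicts the first move of Lemma~\ref{lem:full-block-transport}. Only after this do you reach the form \eqref{eq:actual-packed-predecessors}.

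\textbf{Step~2 breaks down in exactly the configurations that carry the content.} The column next to a short run at the zero gap can be column $0$ itself, so your dichotomy (an extendable short run at the zero gap, or two full runs there) misses a blocked short run on one side with a full run on the other.
\begin{itemize}
\item For $\varepsilon=1$, a short anchor preceded by $s-d$ with width $d-1$ ends at $s-1$, and its left neighbour $s-d$ is blocked by $s-2d$.
\item For $\varepsilon=-1$, the final short run is stuck when $\gamma=1$ and it ends at $s-1$.
\item The case $p=0$, where the full first run $[1,d]$ abuts column $0$, needs the replacement of $p=0$ by $p=d$, with gain $(d+1)[d(t-1)/2-L]>0$.
\end{itemize}

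Your fix of adding column $d$ to the first run fails whenever that run is full, because $d$ and $2d$ would both be occupied. This is the forced situation, since a short first run would simply extend left. Proposition~\ref{prop:residual-placement} covers only complete profiles with $q$ full runs and one run of width $u$, so it cannot be invoked when $\gamma>0$.

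For full runs, Lemma~\ref{lem:full-block-transport} applies only to a \emph{deficient} full run moved to cardinality $K$; it says nothing about non-deficient ones. The paper proves deficiency directly. For $\varepsilon=1$, the first full run's phase reaches $K$ only when $p\equiv u\pmod d$, while $\gamma,\gamma-1<u$. For $\varepsilon=-1$, the final full run's predecessor has nonzero phase. The paper then moves that run to predecessor $s-3d$, $s-4d$, or $3d$, where its height is $K$ and the crossed deficit is positive.

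Your alternative of ``translating within the slack $\gamma$'' is not a common increase. A common increase moves columns by multiples of $d$, and in a maximizer it is already excluded. Finally, the stuck case $\gamma=1$ needs a separate move: add $s,2s,\dots,hs$ and lower all entries by one, using the reversed-column bound $U<(s-1)h/2$ from the proof of Lemma~\ref{lem:local-extensions}.
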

	\begin{proof}
		An incomplete profile has total excess gap length $0<\gamma<u$ by Lemma \ref{lem:actual-crowded-structure}. 
		
		Any internal gap longer than $d$ either adjoins a short, which extends by Lemma \ref{lem:local-extensions}, or separates full runs with predecessors $p$ and $p+2d+x$, where $0<x\le\gamma<d$. Their cardinalities differ, since $ex\not\equiv0\pmod d$; the negative predecessor zero has strictly smaller cardinality than every positive predecessor. We move the smaller full run by $x$ towards the other. The intervening gap becomes $d$, the other gap grows, and the full gains cardinality, contradicting Lemma \ref{lem:full-block-transport}. Thus all excess lies at zero and \eqref{eq:actual-packed-predecessors} applies.
		
		For $\varepsilon=1$, Lemma \ref{lem:complete-phase-normalization} leaves a short anchor at $s-d$ or a final full preceded by $s-2d$. The short anchor must have width $d-1$, else it extends right; then $p=d+\gamma-1$. In the full-anchor case $p=\gamma$. In both cases a first short would extend left, so the first run is full. Its height is below $K$, since its phase would reach $K$ only at $p\equiv u\pmod d$, whereas $\gamma,\gamma-1<u$. We move it across the $q-1$ intervening runs to predecessor $s-3d$ (short anchor) or $s-4d$ (full anchor). The target height is $K$; the crossed deficits are $g-1>0$ and $g>0$, respectively. By Lemma \ref{lem:full-block-transport}, this move increases the size, contradicting maximality. When $q=1$, these positive deficits would belong to an empty block, so the configurations are impossible.
		
		For $\varepsilon=-1$, any first short extends left, since it ends at $d$ or has $p=d$. If $p=0$, the first run is full and column $s-d$ is empty, so $1\notin B$. We replace $p=0$ by $p=d$: all old entries decrease by one and the $d$ entries $jt-1$ ($1\le j\le d$) are added. The gain is $$(d+1)[d(t-1)/2-L]>0,$$ since $L\le qK+C$ and the estimate in the proof of Proposition \ref{prop:residual-placement} gives $dt-2(qK+C)>d$. Thus only $p=d$ remains, with a first full of height $K$. A final full is deficient, since its predecessor $s-d-\gamma$ has nonzero phase $ew\pmod d$, where $w=u-\gamma$. Moving it left to predecessor $3d$ gives crossed deficit $g>0$ and increases the size by Lemma \ref{lem:full-block-transport}, contradicting maximality. For $q=1$, a positive crossed deficit is again impossible. A final short extends right unless $\gamma=1$ and it ends at $s-1$.
		
		In that last case, write its width as $n-1$ and put $h=\lfloor nt/s\rfloor$. Its column tops are $sh-it$, $1\le i<n$. Set $\rho=s-nt+sh\in(0,s)$ and reverse the columns: $sh-it=(n-i)t+\rho-s$. The bound $U<G$ in the proof of Lemma \ref{lem:local-extensions} therefore gives $U<(s-1)h/2$. Its hypotheses were checked above, with $3\le n\le d$. Adding $s,2s,\ldots,hs$ and lowering all entries by one is allowed: column $s-d$ was empty, the final short gains a column, and the excess gap disappears. The size gain $(h+1)[h(s-1)/2-L]$ is positive, again contradicting maximality.
	\end{proof}
	
	\subsection{Comparing complete profiles}
	\begin{proposition}\label{prop:actual-complete-comparison}
		A size maximizer of minimum cardinality with $L>qK$ has size at most $\max(F_q,F_{\rm res})$.
	\end{proposition}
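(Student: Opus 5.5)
By Lemma \ref{lem:actual-crowded-structure} and Proposition \ref{prop:actual-completion}, the maximizer has exactly $q+1$ runs, every cyclic gap has length exactly $d$, and its total width $n$ satisfies $qd<n\le qd+u$. The gaps then total $(q+1)d$, so $n=s-(q+1)d=qd+u$. This gives a complete profile with $\gamma=0$ and deficit $g=d-u$ in the notation of \eqref{eq:actual-packed-predecessors}. The plan is to push this profile, through moves that strictly increase size, to one with $q$ full runs and one run of width $u$. Proposition \ref{prop:residual-placement} then bounds the size by $\max(F_q,F_{\rm res})$.

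The first step normalizes the phase. Since the profile is complete, Lemma \ref{lem:complete-phase-normalization} lets us assume, after dominating by a cyclic reordering of widths, that the profile sits in one of the anchored placements. For $\varepsilon=1$, a short final run is preceded by $s-d$, or a full final run by $s-2d$. For $\varepsilon=-1$, the first run ends at $d$, and $p=d$ is excluded because $\gamma=0$. Domination only increases size. The resulting profile need not have minimum cardinality, so from here on I compare sizes directly rather than invoking minimality.

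The second step concentrates the deficit $g=d-u$ into a single run. Suppose at least two runs are short. Minimality, applied to the original maximizer before normalization, rules out any short run that could be extended by Lemma \ref{lem:local-extensions}; with all gaps equal to $d$, every extension is blocked at distance $d$. I therefore plan to use the transport moves of Lemma \ref{lem:full-block-transport} instead. Any full run whose cardinality is below $K$ is moved across consecutive runs to a position of cardinality $K$, in the rightward direction for $\varepsilon=1$ and leftward for $\varepsilon=-1$. This strictly increases size, which contradicts maximality provided the move stays inside $\{1,\ldots,s-1\}$. So every full run has cardinality $K$, and the full runs occupy the canonical phases.

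For the short runs, I would compare two widths $x+y=$ const. The idea is to transfer one column from the shorter run to the longer one, shifting the intermediate runs by one column so that all gaps stay equal to $d$. The change in size should be computed with \eqref{eq:run-beta} and estimated with the mean bound in Lemma \ref{lem:short-mean-elementary} together with $L<U$. The convexity of $T(\ell)$ in the height bound $H\le\frac{e-1}{d}T(\ell)+\ell$ suggests that merging deficits into one run increases size. The expected outcome is a single run of width $u$ and $q$ full runs of cardinality $K$.

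The main obstacle is this merging step. It is not a whole-run translation, so neither Lemma \ref{lem:full-block-transport} nor \eqref{eq:common-generator-shift} applies directly. The difficulty is to show that shortening one run while lengthening another cannot lose size. My first attempt would be to view the move as a full-block transport in which the "full" run is the union of the two short runs together with the columns between them. I would recompute the residue identity $d\eta=s\Delta+\varepsilon\xi$ and reuse the bound $F'-F>\Delta[q(K-2d)-d+1]$. If that reduction fails, the fallback is a direct estimate. I would write the size of a complete profile as an explicit function of the widths and predecessors using \eqref{eq:run-beta}, then show it is Schur-convex in the deficits $d-a_i$, using $dt-2(qK+C)>d$ from the proof of Proposition \ref{prop:residual-placement}.
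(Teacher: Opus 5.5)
Your reduction to a complete anchored profile of total width $qd+u$ is correct. So is the endpoint: if the whole deficit $g=d-u$ lies in one run, i.e.\ $E_2=\sum_{i<j}\delta_i\delta_j=0$ for widths $a_i=d-\delta_i$, then Proposition \ref{prop:residual-placement} finishes.

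\textbf{The gap.} The case $E_2>0$ is the entire content of the statement, and you leave it open. Even granting your transport step for deficient full runs, you still must show that spreading the deficit over several short runs cannot beat $\max(F_q,F_{\rm res})$. Neither of your routes closes this.
\begin{itemize}
\item The first route fails. Lemma \ref{lem:full-block-transport} works only because a whole saturated run of width $d$ is moved. Every old entry then rises by one common $\eta$, with $d\eta=s\Delta+\varepsilon\xi$ or $d\eta=s\delta+2kd-b$, and every crossed entry drops by exactly two. Two short runs together with the $d$ empty columns between them are not a run. A one-column transfer that shifts the intervening runs also changes their predecessor phases, hence their heights and cardinalities, non-uniformly.
\item The Schur-convexity fallback is an unproved local claim, stronger than what is needed. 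Nothing in your sketch shows that each one-column transfer gains size. The inequality $dt-2(qK+C)>d$, taken from the single-short comparison, does not control the residue errors that appear in the heights.
\end{itemize}

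\textbf{What the paper does instead.} It makes no local moves for this case.
\begin{enumerate}
\item It writes each column height as $h_j=(ej+r_{p+j}-r_p)/d$, where $r_z=d\lceil ez/d\rceil-ez$.
\item It sets every endpoint error $r_{p_i}$ to zero. This can only increase the size. The proof of Lemma \ref{lem:local-extensions} gives $V_i>U>L$ for each run, since every $a_i\ge2$ when $E_2>0$. The remaining quadratic term is nonnegative by Cauchy--Schwarz together with $\sum a_i<s$.
\item It computes $F_{\rm res}$ minus this zero-error size exactly, from the column moment identity \eqref{eq:interval-moments} and telescoping in the shifted residues. The difference is a nonnegative error term plus a quantity $G_0$ with
\[
2d^2G_0\ge(q+1)dE_2[(d+2u+1)e(e-4)+4ue-2(d-1)]>0.
\]
Hence $F<F_{\rm res}$ whenever $E_2>0$.
\end{enumerate}

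\textbf{On minimality.} Step 2 uses $L<U$, which comes from minimum cardinality, so you should not give minimality up. You do not need to. A maximizer admits no allowed common generator increase, since \eqref{eq:common-generator-shift} would strictly raise its size. It therefore already satisfies the anchoring of Lemma \ref{lem:complete-phase-normalization}, and no dominating replacement is needed.
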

	\begin{proof}
		Proposition \ref{prop:actual-completion} and Lemma \ref{lem:complete-phase-normalization} give a complete, normalized profile. Write its widths as $a_i=d-\delta_i$ $(0\le i\le q)$, where $\sum\delta_i=g=d-u$, and put $$Y=\sum_{i=1}^q\delta_i,\quad a_0=u+Y,\quad E_2=\sum_{i<j}\delta_i\delta_j.$$ If $E_2=0$, the desired bound follows from Proposition \ref{prop:residual-placement}. Otherwise every $a_i>u\ge1$, and we show that $F<F_{\rm res}$.
		
		Put $r_z=d\lceil ez/d\rceil-ez$, so $0\le r_z<d$ and $r_{z+d}=r_z$. Number the non-anchor runs in physical order for $\varepsilon=1$ and in reverse order for $\varepsilon=-1$. Put $y_h=\sum_{i=h}^q\delta_i$, $y_{q+1}=0$, $c=2q+1$, and $\kappa=\mathbf1_{\{Y=g\}}$. Summing widths and gaps from the anchor, last for $\varepsilon=1$ and first for $\varepsilon=-1$, gives the physical predecessors $P_i$ below. The auxiliary triples below use offsets $p_i$, distinct from the predecessors $P_i$.
		$$\begin{array}{cc|c|ccc}
			\varepsilon&i&P_i&p_i&a_i&b_i\\\hline
			+&0&s-d-d\kappa&0&a_0&1\\
			+&h&u+2d(h-1)+y_h-d\kappa&y_h&a_h&c+2-2h\\
			-&0&g-Y&-Y&a_0&c\\
			-&h&2d(q+1-h)-y_{h+1}&-y_{h+1}&a_h&c-2h
		\end{array}\qquad(1\le h\le q).$$
		For data $(p,a,b)$, column $j$, $1\le j\le a$, has height and top
		$$h_j=\frac{ej+r_{p+j}-r_p}{d},\qquad T_j=jt+\frac{\varepsilon p-sr_p}{d}-b.$$
		Indeed, for the positive sign $eu\equiv-1\pmod d$ gives $d\rho_{P_h}=s(d-1-r_{y_h})+P_h$, hence $T_j=jt+\rho_{P_h}-s+\kappa$. For the negative sign, $d\rho_{P_h}=s(d-r_{-y_{h+1}})-P_h$ gives $T_j=jt+\rho_{P_h}-s+1$. At the anchor, $\rho_{P_0}-s=-1-\kappa$ in the positive sign and $(Y-sr_{-Y})/d-(c+1)$ in the negative sign, also when $P_0=0$. The height formula now follows from \eqref{eq:run-heights}.
		
		For the negative sign the table describes $(B+1)\cup\{0\}$, with zero at the bottom of the last anchor column because $p+a_0=u$ and $r_u=d-1$. For the positive sign it describes $B+1$ when $Y=g$, in which case we adjoin zero separately, and $B$ otherwise. Whenever padding is used, the cardinality and beta-sum become $L+1$ and $Q+L$, with unchanged size $Q+L-\binom{L+1}{2}=F$. In the following calculations, this zero is counted exactly once.
		
		We set each endpoint error $r_p$ to zero, retain the $r_{p+j}$, and define $F_0$ by the size polynomial at these possibly fractional heights. Expanding the column beta-sum $h_jT_j-s\binom{h_j}{2}$ gives
		\begin{equation}\label{eq:interval-moments}
			2d^2Q_j^0=se^2j^2+e[sd-2bd+2\varepsilon(p+j)]j+\Psi(p+j,b),
		\end{equation}
		where $\Psi(z,b)=(sd-2bd+2\varepsilon z)r_z-sr_z^2$. Put $\alpha=(1+\varepsilon)/2$ and
		$$\mathcal E=\sum_{h=1}^q\sum_{j=1}^{y_h-1+\alpha}r_{\varepsilon j}\ge0.$$ Empty sums are zero.
		To compare with the residual candidate, whose deficits are $(g,0,\ldots,0)$, we define $G_0$ by
		\begin{equation}\label{eq:zero-error-comparison}
			F_{\rm res}-F_0=G_0+\frac{\mathcal E}{d}.
		\end{equation}
		
		The arguments $p+j$ range over
		$$\begin{array}{c|cc}
			&\text{anchor}&\text{run }h\\\hline
			+&[1,u+Y]&[y_h+1,d+y_{h+1}]\\
			-&[1-Y,u]&[1-y_{h+1},d-y_h].
		\end{array}$$
		Let $\Delta_r$ denote profile minus canonical for sums over these intervals. The identities
		$$\Psi(z+d,b)=\Psi(z,b)+2\varepsilon dr_z,\qquad
		\Psi(z,b+2)=\Psi(z,b)-4dr_z$$
		make the endpoint terms telescope. For the positive sign, deleting $[1,y_h]$ and adding $d+[1,y_{h+1}]$ to each full period gives the non-anchor difference $-\sum_{j=1}^Y\Psi(j,c+1)-2d\mathcal E$. The anchor adds $\sum_{j=1}^Y\Psi(u+j,1)$. For the negative sign, adding $[1-y_{h+1},0]$ and deleting $d+[1-y_h,0]$ gives $-\sum_{j=0}^{Y-1}\Psi(-j,c)-2d\mathcal E$, whose sum cancels the anchor's addition.
		
		For the positive sign, $ue\equiv-1\pmod d$ gives
		$$\begin{aligned}
			r_{u+j}&=r_j+1-d\mathbf1_{\{j=g\}}\qquad(1\le j\le g),\\
			\Psi(u+j,1)-\Psi(j,c+1)&=s(d-1)-2d+2u+2j.
		\end{aligned}$$
		At the only wrap, $j=g$, we have $r_g=d-1$ and both sides of the second identity equal $s(d-1)$. Thus in either sign
		$$\Delta_r\sum\Psi=\alpha Y[s(d-1)-2d+2u+Y+1]-2d\mathcal E.$$
		
		Write $L_c,Q_c$ for the padded canonical quantities and $L_0,Q_0$ for the zero-error quantities. Then $L_c=qK+C+1-\alpha$. Expanding $a_i=d-\delta_i$ gives
		$$qT(d)+T(u)-\sum_iT(a_i)=E_2,\qquad
		\Delta S_2:=qT_2(d)+T_2(u)-\sum_iT_2(a_i)\ge(d+u+1)E_2,$$
		since the remainder in the second relation is $\sum_{i<j<k}\delta_i\delta_j\delta_k\ge0$.
		The same intervals give $\Delta_r\sum r_z=\alpha(Y-d\kappa)$. Counting the extra zero when $\varepsilon=1$ and $Y=g$ cancels the $\kappa$ term, so
		$$\Delta L:=L_c-L_0=\frac{eE_2-\alpha Y}{d}.$$
		Define $\Delta B_1=\sum_{\rm canonical}bT(a)-\sum_{\rm profile}bT(a)$ and $\Delta P_1=\sum_{\rm canonical}pT(a)-\sum_{\rm profile}pT(a)$. Summing \eqref{eq:interval-moments} yields
		\begin{align*}
			2d^2\left(Q_c-Q_0-\frac{\mathcal E}{d}\right)
			={}&se^2\Delta S_2+e[sdE_2-2d\Delta B_1+2\varepsilon\Delta P_1+2\varepsilon\Delta S_2]\\
			&-\alpha Y[s(d-1)-2d+2u+Y+1].
		\end{align*}
		Since $se^2+2\varepsilon e>0$, we may replace $\Delta S_2$ by its lower bound. Subtracting $\binom{L_c}{2}-\binom{L_0}{2}=\Delta L(L_c-1/2)-(\Delta L)^2/2$ and discarding the nonnegative term $E_2^2e^2$ gives
		$$2d^2G_0\ge(q+1)d(d+2u+1)E_2e^2+C_1e-\alpha(q+1)d(d-1)Y,$$
		where
		$$\begin{aligned}
			C_1={}&[(q+1)d(d+1)+\varepsilon(3d+u+1)]E_2\\
			&-2d\Delta B_1+2\varepsilon\Delta P_1+2\alpha Y\sum_iT(a_i).
		\end{aligned}$$
		Put $A_0=T(a_0)-T(u)=Y(2u+Y+1)/2$; the non-anchor triangular deficits sum to $E_2+A_0$. The triples, $y_h\le Y$ and $y_{q+1}=0$ give
		$$\begin{aligned}
			\Delta B_1&\le(2q-1+2\alpha)E_2+2\alpha qA_0,\\
			\varepsilon\Delta P_1+\alpha Y\sum_iT(a_i)&\ge-(1-\alpha)qT(d)Y.
		\end{aligned}$$
		Since $2A_0\le Y(d+u+1)$, substitution in $C_1$ gives
		$$C_1+2qd(d+u+1)Y\ge[qd(d-3)+d^2-u-1]E_2\ge0.$$
		Also $E_2\ge g-1$ and $Y\le g\le2E_2$, so
		$$2d^2G_0\ge(q+1)dE_2[(d+2u+1)e(e-4)+4ue-2(d-1)]>0,$$
		since $e\ge d+1\ge4$ and $u\ge1$.
		
		To restore the endpoint errors, consider an original run of width $a\ge2$ and put $n=a+1$. If $q=1$, then $E_2>0$ makes both deficits positive, so $n\le d<s/3$; if $q\ge2$, then $n\le d+1\le s/3$. The proof of Lemma \ref{lem:local-extensions} therefore gives $U<G$. With $\rho$ its predecessor residue, put $V=t(a+1)/2+\rho-(s-1)/2$. Since $V-G=[(a+1)t+(s+1)\rho]/(2s)>0$, we have $V>U>L$. Padding increases every top and $L$ by one, preserving $V-L$; we use $V_i,L$ for the corresponding table quantities.
		
		Setting $r_{p_i}=0$ increases every height in run $i$ by $x_i=r_{p_i}/d$ and every column top by $sx_i$. The resulting size polynomial $F_0$ therefore satisfies
		$$F_0-F=\sum_i a_ix_i(V_i-L)+\frac{s\sum_i a_ix_i^2-(\sum_i a_ix_i)^2}{2}\ge0,$$
		by Cauchy-Schwarz and $\sum_i a_i<s$. Therefore $F_{\rm res}-F\ge F_{\rm res}-F_0=G_0+\mathcal E/d>0$.
		
	\end{proof}
	
	\begin{proof}[Proof of Theorem \ref{thm:size}]
		The cases $d\le2$ follow from the $d=1$ calculation and Theorem \ref{d2-complete-theorem}. For $d\ge3$, the required upper bound follows from Theorem \ref{global-length-certificate} when $u=0$ or $L\le qK$, and from Proposition \ref{prop:actual-complete-comparison} otherwise. Since the supports in Subsection \ref{sec:size-candidates} attain the candidate values, the result follows.
	\end{proof}
	
	For odd $s\ge3$ and $t=s+2$, we have $d=(s-1)/2$, $e=d+1$, $\varepsilon=-1$, $q=1$, and $u=0$. Thus $M(s,t)=F_1$, also for $d=1,2$ by the preceding cases. Substituting $h_j=j+1$ in \eqref{eq:full-moments}-\eqref{eq:size-full} recovers \cite[Theorem 3.10]{Yan}:
	\begin{equation}\label{eq:size-gap-two}
		M(s,s+2)=\frac{(s^2-1)(s+3)(5s+17)}{384}.
	\end{equation}
	
	\section{Arithmetic evaluation}\label{sec:arithmetic}
	
	For $a\in\mathbb Z$, $b\in\mathbb Z_{>0}$ and $\gcd(a,b)=1$, the classical Dedekind sum is $$\mathfrak s(a,b)=\sum_{j=1}^{b-1}\left(\frac jb-\frac12\right)\left(\left\{\frac{aj}{b}\right\}-\frac12\right),$$ where $\{x\}=x-\lfloor x\rfloor$ is the fractional part and $\mathfrak s(a,1)=0$. We use $\mathfrak s(a,b)=\mathfrak s(a\bmod b,b)=\mathfrak s(a^{-1},b)$ and $\mathfrak s(-a,b)=-\mathfrak s(a,b)$, with the inverse taken modulo $b>1$. Dedekind reciprocity \cite[Theorem 2.2]{BeckRobins} gives
	\begin{equation}\label{eq:dedekind-reciprocity}
		\mathfrak s(a,b)+\mathfrak s(b,a)=\frac{a^2+b^2+1}{12ab}-\frac14\qquad(a,b>0,\ \gcd(a,b)=1).
	\end{equation} Periodicity and \eqref{eq:dedekind-reciprocity} give a Euclidean recursion ending at $\mathfrak s(a,1)=0$.

	\begin{proposition}\label{prop:arithmetic-evaluation}
		Let $\sigma=\mathfrak s(s,d)$. For positive integers $a,b$, put
		$$\mathcal P(a,b)=\frac{(a^2-1)s(b^2+1)}{12a}+\frac{b(a+1)[3s+(a+2)t]}{12}.$$
		Then
		\begin{align}
			J&=\frac{e(d+1)(2d+1)}6+\frac{d(d-1)}4+\varepsilon d\sigma,\label{eq:J-dedekind}\\
			F_k&=k[\mathcal P(d,e)+\sigma]-\frac{k^2K(K+2)}2.\label{eq:F-dedekind}
		\end{align} Together with \eqref{eq:kernel-Z}, the first identity evaluates both height sums. If $u>0$, put $c=2q+1$. Then
		\begin{equation}\label{eq:R-dedekind}
			\begin{split}
				F_{\rm res}={}&F_q+\mathcal P(u,v)+c\left[\frac{d^2+u^2+1}{12du}-\frac14-\sigma\right]\\
				&-qK(C+1)-\frac{C(C+c+1)}2.
			\end{split}
		\end{equation}
	\end{proposition}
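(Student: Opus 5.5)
We first express $J$ through a Dedekind sum and then substitute into the earlier formulas. Since $\gcd(d,e)=1$, the errors $h_j-je/d$ for $1\le j\le d$ run over $0,1/d,\ldots,(d-1)/d$. Hence
$$h_j=\frac{je}{d}+\frac{(-je)\bmod d}{d},$$
and therefore
$$J=\frac{eT_2(d)}{d}+\frac1d\sum_{j=1}^{d} j\,\bigl((-je)\bmod d\bigr).$$
We rewrite $(-je)\bmod d=d\{-je/d\}$ and center both factors at $1/2$. The sum $\sum_j j\{-je/d\}$ then becomes $d\,\mathfrak s(-e,d)$ plus elementary terms; the term $j=d$ contributes $0$. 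Collecting the elementary terms should give $e(d+1)(2d+1)/6+d(d-1)/4$. It remains to identify $\mathfrak s(-e,d)$ with $\varepsilon\sigma$. From $dt-se=\varepsilon$ we get $se\equiv-\varepsilon\pmod d$, so $e\equiv-\varepsilon s^{-1}$. The inversion and sign rules then give $\mathfrak s(-e,d)=\varepsilon\,\mathfrak s(s^{-1},d)=\varepsilon\sigma$. This proves \eqref{eq:J-dedekind}. For $d=1$ we have $\sigma=0$ and $J=e$, which is consistent.

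To obtain \eqref{eq:F-dedekind}, we substitute \eqref{eq:kernel-Z} into $W=2tJ-sZ+(s-\varepsilon)K$. This gives
$$W=2J\Bigl(t-\frac{se}{d}\Bigr)+\frac{se^2T_2(d)}{d^2}-\frac{s(d-1)(2d-1)}{6d}+(s-\varepsilon)K.$$
Since $t-se/d=\varepsilon/d$, the $J$ term equals $2\varepsilon J/d$, and the Dedekind part contributes $2\varepsilon^2\sigma=2\sigma$. Dividing $F_k=\frac k2[W-kK(K+2)]$ by $k$, it remains to check that the elementary part of $W/2$ equals $\mathcal P(d,e)$. This is a routine polynomial identity in $d,e,s,t$ modulo the relation $dt-se=\varepsilon$. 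We will verify it by eliminating $\varepsilon$ via $\varepsilon=dt-se$ and using $K=(de+d+e-1)/2$. This identity is the main bookkeeping risk, and we will confirm it numerically at $(10,13)$, where $F_1=115$, and at $(11,15)$, where $F_1=136$.

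For \eqref{eq:R-dedekind}, we evaluate $Q_{\rm sh}$ in the same way, with the pair $(u,v)$ in place of $(d,e)$. Writing $g_j=jv/u+\{-jv/u\}$ expresses $\sum jg_j$ and $\sum g_j^2$ through $\mathfrak s(-v,u)$. The relation $ut=sv-\varepsilon c$ replaces $jt$ by $jsv/u-\varepsilon jc/u$. Most terms reassemble into the analogue $\mathcal P(u,v)$, together with the terms $-b_\varepsilon C$ and $-\varepsilon c J_u/u$ from the tops, where $J_u=\sum jg_j$. We then rewrite $\mathfrak s(v,u)$ in terms of $\sigma$. From $dv-ue=\varepsilon$ and $s=cd+u$ we get $v\equiv\varepsilon d^{-1}\pmod u$, so $\mathfrak s(-v,u)=-\varepsilon\,\mathfrak s(d,u)$. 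Since $s\equiv u\pmod d$, we also have $\sigma=\mathfrak s(u,d)$. Reciprocity \eqref{eq:dedekind-reciprocity} then gives $\mathfrak s(d,u)=\frac{d^2+u^2+1}{12du}-\frac14-\sigma$, which explains the bracket multiplied by $c$.

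The main obstacle is matching the leftover elementary terms, namely the $b_\varepsilon$ contribution ($1$ or $c+1$), the $\binom{C}{2}$ term, and the $\varepsilon$-dependent pieces. These must combine into the $\varepsilon$-free expression $-C(C+c+1)/2$. We will do this case by case in $\varepsilon$ and then check numerically against $(11,15)$, where $F_{\rm res}=143$.
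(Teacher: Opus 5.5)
Your proposal is correct and follows essentially the paper's proof. You expand the ceiling moments through a Dedekind sum and identify $\mathfrak s(-e,d)=\varepsilon\sigma$ from $se\equiv-\varepsilon\pmod d$, then substitute into \eqref{eq:kernel-Z} and \eqref{eq:size-full}. For $(u,v)$ you repeat this with $ut-sv=-\varepsilon c$, $\mathfrak s(-v,u)=-\varepsilon\,\mathfrak s(d,u)$, $s\equiv u\pmod d$ and reciprocity. The bookkeeping you defer does close: the elementary part of $W/2$ equals $\mathcal P(d,e)$ identically once $t=(se+\varepsilon)/d$, and in the residual case the sign-dependent leftovers combine as $-\frac{\varepsilon c}{2}C-b_\varepsilon C=-\frac{c+2}{2}C$ for both signs, which together with $-\binom C2$ gives $-\frac{C(C+c+1)}{2}$.
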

	\begin{proof}
		For $d>1$, we expand the Dedekind sum using $h_j=je/d+1-(je\bmod d)/d$ for $j<d$. Since $e^{-1}\equiv-\varepsilon s\pmod d$, we have $\mathfrak s(e,d)=-\varepsilon\sigma$, giving \eqref{eq:J-dedekind}. Substituting this expression for $J$ in \eqref{eq:kernel-Z} and \eqref{eq:size-full} gives \eqref{eq:F-dedekind}. For $d=1$ these identities hold directly.
		
		For the residual term, we apply the same moment identities to $(u,v)$, using $ut-sv=-\varepsilon c$ and $\mathfrak s(v,u)=\varepsilon\mathfrak s(d,u)$. Since $s\equiv u\pmod d$, reciprocity gives $\mathfrak s(d,u)=(d^2+u^2+1)/(12du)-1/4-\sigma$. Substitution in \eqref{eq:size-residual} gives \eqref{eq:R-dedekind}.
	\end{proof}

	For fixed $s,r$ and $t=ms+r$, the parameters $d,\varepsilon,q,u,\sigma$ are fixed and $e,v$ are affine in $m$. Thus each $F_k$ and $F_{\rm res}$ are quadratic in $m$, and for $d\ge3$, $M(s,ms+r)$ is one quadratic or the maximum of two for every $m\ge1$.

	\section*{Acknowledgments}
	The author is grateful to his PhD advisor, Professor Hamza Ye\c{s}ilyurt, and to Professor Mustafa \c{C}elebi P{\i}nar for their guidance and helpful feedback.


\begin{thebibliography}{99}
		
		\bibitem{Amdeberhan}
		T. Amdeberhan, \emph{Theorems, problems and conjectures}, arXiv:1207.4045v7 (2022).
		
		\bibitem{Anderson}
		J. Anderson, \emph{Partitions which are simultaneously $t_1$- and $t_2$-core}, Discrete Math. \textbf{248} (2002), 237-243.
		
		\bibitem{BeckRobins}
		M. Beck and S. Robins, \emph{Dedekind sums: a combinatorial-geometric viewpoint}, in Unusual Applications of Number Theory, M. B. Nathanson (ed.), DIMACS Ser. Discrete Math. Theoret. Comput. Sci., vol. 64, American Mathematical Society, 2004, pp. 25-35.
		
		\bibitem{BSS}
		H. E. Burson, S. Sisneros-Thiry, and A. Straub, \emph{Refined counting of core partitions into $d$-distinct parts}, Electron. J. Combin. \textbf{28} (2021), P1.37.
		
		\bibitem{CHK}
		H. Constantin, B. Houston-Edwards, and N. Kaplan, \emph{Numerical sets, core partitions, and integer points in polytopes}, in Combinatorial and Additive Number Theory II, M. B. Nathanson (ed.), Springer Proc. Math. Stat., vol. 220, Springer, 2017, pp. 99-127.
		
		\bibitem{Fayers}
		M. Fayers, \emph{The $t$-core of an $s$-core}, J. Combin. Theory Ser. A \textbf{118} (2011), 1525-1539.
		
		\bibitem{Johnson}
		P. Johnson, \emph{Lattice points and simultaneous core partitions}, Electron. J. Combin. \textbf{25} (2018), P3.47.
		
		\bibitem{Kravitz}
		N. Kravitz, \emph{On the number of simultaneous core partitions with $d$-distinct parts}, Discrete Math. \textbf{342} (2019), no. 12, 111592.
		
		\bibitem{NathSellers}
		R. Nath and J. A. Sellers, \emph{Abaci structures of $(s,ms\pm1)$-core partitions}, Electron. J. Combin. \textbf{24} (2017), P1.5.
		
		\bibitem{OlssonStanton}
		J. B. Olsson and D. Stanton, \emph{Block inclusions and cores of partitions}, Aequationes Math. \textbf{74} (2007), 90-110.
		
		\bibitem{Paramonov}
		K. Paramonov, \emph{Cores with distinct parts and bigraded Fibonacci numbers}, Discrete Math. \textbf{341} (2018), 875-888.
		
		\bibitem{PPX}
		E. Pesikoff, B. Przybocki, and J. Xia, \emph{The maximum hook length of $d$-distinct simultaneous core partitions}, Electron. J. Combin. \textbf{30} (2023), P3.30.
		
		\bibitem{Sahin}
		M. Sahin, \emph{Core partitions with $d$-distinct parts}, Australas. J. Combin. \textbf{72} (2018), 19-28.
		
		\bibitem{Straub}
		A. Straub, \emph{Core partitions into distinct parts and an analog of Euler's theorem}, European J. Combin. \textbf{57} (2016), 40-49.
		
		\bibitem{Vandehey}
		J. Vandehey, \emph{Containment in $(s,t)$-core partitions}, arXiv:0809.2134 (2008).
		
		\bibitem{Wang}
		V. Y. Wang, \emph{Simultaneous core partitions: parameterizations and sums}, Electron. J. Combin. \textbf{23} (2016), P1.4.
		
		\bibitem{XiongDistinct}
		H. Xiong, \emph{Core partitions with distinct parts}, Electron. J. Combin. \textbf{25} (2018), P1.57.
		
		\bibitem{XiongSize}
		H. Xiong, \emph{On the largest sizes of certain simultaneous core partitions with distinct parts}, European J. Combin. \textbf{71} (2018), 33-42.
		
		\bibitem{Yan}
		S. H. F. Yan, G. Qin, Z. Jin, and R. D. P. Zhou, \emph{On $(2k+1,2k+3)$-core partitions with distinct parts}, Discrete Math. \textbf{340} (2017), 1191-1202.
		
	\end{thebibliography}
\end{document}